\newtheorem{theorem}{Theorem}[section]
\newtheorem{corollary}[theorem]{Corollary}
\newtheorem{proposition}[theorem]{Proposition}
\theoremstyle{definition}    
\newtheorem{definition}[theorem]{Definition}
\theoremstyle{remark}
\newtheorem{remark}[theorem]{Remark}
\newtheorem{example}[theorem]{Example}
\newcommand{\pair}[2]{\langle #1, #2 \rangle}
\newcommand{\ignore}[1]{}
\newcommand{\ol}[1]{\overline{#1}}
\newcommand{\ul}[1]{\underline{#1}}
\newcommand{\ti}[1]{\widetilde{#1}}
\newcommand{\wh}[1]{\widehat{#1}}
\newcommand{\st}[1]{\mathsf{#1}}
\newcommand{\scr}[1]{\mathscr{#1}}
\newcommand{\sm}[1]{\mathsmaller{#1}}
\newcommand{\tn}[1]{\textnormal{#1}}
\renewcommand{\i}{{\mathrm{i}}}
\def\Ad{\ensuremath{\textnormal{Ad}}}
\def\g{\ensuremath{\mathfrak{g}}}
\def\t{\ensuremath{\mathfrak{t}}}
\def\n{\ensuremath{\mathfrak{n}}}
\def\a{\ensuremath{\mathfrak{a}}}
\def\hvee{\ensuremath{\textnormal{h}^\vee}}
\def\R{\ensuremath{\mathcal{R}}}
\def\C{\ensuremath{\mathcal{C}}}
\def\F{\ensuremath{\mathcal{F}}}
\def\E{\ensuremath{\mathcal{E}}}
\def\A{\ensuremath{\mathcal{A}}}
\def\Y{\ensuremath{\mathcal{Y}}}
\def\U{\ensuremath{\mathcal{U}}}
\def\V{\ensuremath{\mathcal{V}}}
\def\N{\ensuremath{\mathcal{N}}}
\def\H{\ensuremath{\mathcal{H}}}
\def\K{\ensuremath{\mathcal{K}}}
\def\M{\ensuremath{\mathcal{M}}}
\def\S{\ensuremath{\mathcal{S}}}
\def\Q{\ensuremath{\mathcal{Q}}}
\def\c{\ensuremath{\mathsf{c}}}
\def\hc{\ensuremath{\widehat{\mathsf{c}}}}
\def\bC{\ensuremath{\mathbb{C}}}
\def\bR{\ensuremath{\mathbb{R}}}
\def\bZ{\ensuremath{\mathbb{Z}}}
\def\bB{\ensuremath{\mathbb{B}}}
\def\bK{\ensuremath{\mathbb{K}}}
\def\End{\ensuremath{\textnormal{End}}}
\def\Aut{\ensuremath{\textnormal{Aut}}}
\def\Hom{\ensuremath{\textnormal{Hom}}}
\def\pr{\ensuremath{\textnormal{pr}}}
\def\ran{\ensuremath{\textnormal{ran}}}
\def\.{\ensuremath{\cdot}}
\def\ker{\ensuremath{\textnormal{ker}}}
\def\comp{\ensuremath{\textnormal{comp}}}
\def\id{\ensuremath{\textnormal{id}}}
\def\Tr{\ensuremath{\textnormal{Tr}}}
\def\Cl{\ensuremath{\textnormal{Cl}}}
\def\Cliff{\ensuremath{\textnormal{Cliff}}}
\def\index{\ensuremath{\textnormal{index}}}
\def\anti{\ensuremath{\textnormal{anti}}}
\def\dim{\ensuremath{\textnormal{dim}}}
\def\aff{\ensuremath{\textnormal{aff}}}
\def\KK{\ensuremath{\textnormal{KK}}}
\def\pt{\ensuremath{\textnormal{pt}}}
\def\op{\ensuremath{\textnormal{op}}}
\def\K{\ensuremath{\textnormal{K}}}
\def\triv{\ensuremath{\textnormal{triv}}}
\def\rot{\ensuremath{\tn{rot}}}
\def\Irr{\ensuremath{\tn{Irr}}}
\def\bas{\ensuremath{\tn{bas}}}
\def\hGamma{\ensuremath{\widehat{\Gamma}}}
\def\hG{\ensuremath{\widehat{G}}}
\def\hg{\ensuremath{\widehat{g}}}
\def\hgamma{\ensuremath{\widehat{\gamma}}}
\def\hPi{\ensuremath{\widehat{\Pi}}}
\def\heta{\ensuremath{\widehat{\eta}}}
\begin{document}
\sloppy
\title{Geometric K-homology and the Freed-Hopkins-Teleman Theorem}
\author{Yiannis Loizides}

\begin{abstract}
We construct a map at the level of cycles from the equivariant twisted K-homology of a compact, connected, simply connected Lie group $G$ to the Verlinde ring, which is inverse to the Freed-Hopkins-Teleman isomorphism.  As an application, we prove that two of the proposed definitions of the quantization of a Hamiltonian loop group space---one via twisted K-homology of $G$ and the other via index theory on non-compact manifolds---agree with each other.
\end{abstract}
\maketitle
\ignore{
\textbf{Comments}\\
We cannot really say that the map is canonical.  We don't even obtain a twisted K-group until choosing a DD bundle, and one doesn't have canonical isomorphisms between the groups defined using different bundles (different Morita morphisms give different isomorphisms).  At the moment we don't have a map that works for arbitrary DD bundles, but only for those constructed in the particularly nice way using $PG$.  I guess once we fix this model, the map must be canonical, for the boring reason that it inverts the FHT map.  That it is canonical is not completely obvious to me from the construction... e.g. we also choose $\A_T$ and the Bott map, etc. so in what sense is this canonical?  Another remark: in fact for our special case there are canonical isomorphism classes of Morita morphisms.  This is because $H^2_G(G)=0$, so any two Morita morphisms are isomorphic (i.e. one has not only Morita isomorphisms between any two DD bundles at the same level, but \emph{canonical} ones, up to higher homotopy...see the beginning of section 3 in Eckhard's conjugacy classes paper).

The underlying reason for this is that $L^2(\Y)$ (we're ignoring spinor bundles for a moment) can be viewed as the space of $L^2$-sections of the bundle of Hilbert spaces 
\[ \Q:=\Y \times_\Pi L^2(\Pi) \rightarrow Y. \] 
Moreover $\Q$ carries a \emph{canonical flat connection} $\nabla^{\Q}$ with the following property.  Let $\st{D}$ be a $1^{st}$ order differential operator on $Y$, and let $\st{D}^{\Q}$ denote the operator on $L^2(Y,\Q)$ obtained by twisting $\st{D}$ by $\Q$ using $\nabla^{\Q}$.  Then under the isomorphism $L^2(Y,\Q)\simeq L^2(\Y)$, the operator $\st{D}^{\Q}$ is the lift of $\st{D}$ to the covering space---in the usual sense that one may lift a differential operator to a covering space, using diffeomorphisms identifying sufficiently small open subsets of $\Y$ with open subsets of $Y$.  This correspondence is often used in literature on the $L^2$ index theorem for covering spaces. (Cite Singer's paper, Atiyah's paper, Schick's paper.)

For the presentation, and to justify working with the slightly tricky central extension of $\Pi$, I think it is probably good to emphasize that our map is canonical at the level of cycles.  Working with the simpler group would involve a small choice at one stage (choosing an isomorphism from $T\ltimes \wh{\Pi}$ to $T\ltimes \hPi^{\triv}$?

Another good remark is that we are expanding on a remark made in Freed-Hopkins-Teleman III, about an analytic realization of a certain wrong-way map, which they speculated would exist and be possible to describe using operator K-theory techniques.  We can comment that we hope to return to the case of a general compact Lie group in a future article.

We construct the space $\N$ as a pullback of $PG \rightarrow G$, and the `global transversal' $\Y$ in the same way as before.  The DD bundle trivializes over $PG$, and composing $\S$ with the trivialization gives rise to a spinor module $S$ over $\N$, and hence also over $\Y$.  `Dividing' the pullback of $E$ by $S$ we get the twisted vector bundle $V$.  The vector bundle $S$ is just the same as that considered in our papers; there it takes some effort to construct since our perspective was opposite: we wished to construct $S$ (and/or $\S$) from scratch, i.e. show that q-Hamiltonian spaces automatically possess such Morita morphisms, whereas now we simply assume that we have one (which may or may not come from the preceding construction).  So we will obtain a canonical $S$ over $\Y$ with no choice of Morita trivialization over the maximal torus---the flip side as we know well is that $S$ does not carry a $\Pi$-action, and this is presumably where the isomorphism of groups $T\ltimes \wh{\Pi} \simeq T\ltimes \wh{\Pi}^{\triv}$ (if I remember correctly we only use this as a tool to compute the K-theory of the corresponding $C^\ast$ algebra).
}

\section{Introduction}
A remarkable theorem due to Freed, Hopkins and Teleman \cite{FHTI,FHTII,FHTIII} relates the representation theory of the loop group $LG$ of a compact Lie group $G$ to the equivariant twisted K-theory of $G$.  In the special case of a connected, simply connected and simple Lie group, the theorem states that there is an isomorphism of rings $R_k(G)\simeq \K^G_0(G,\A^{(k+\hvee)})$.  Here $R_k(G)$ is the Verlinde ring of level $k>0$ positive energy representations of the basic central extension $LG^{\tn{bas}}$ of the loop group, while $\K^G_0(G,\A^{(k+\hvee)})$ is the equivariant K-homology of $G$ with twisting (Dixmier-Douady class) $k+\hvee \in \bZ \simeq H^3_G(G,\bZ)$.  The shift $\hvee$ is a Lie theoretic constant associated to $G$ called the dual Coxeter number.  Freed-Hopkins-Teleman work with twisted K-theory, which is related by a Poincare duality isomorphism \cite{TuPoincare}.

In the proof Freed, Hopkins and Teleman construct a map, at the level of cycles 
\[ \left(\begin{array}{c} \text{level $k$ positive energy}\\ \text{representations of } LG^{\tn{bas}} \end{array}\right) \quad \dashrightarrow \quad \left(\begin{array}{c} \text{$(k+\hvee)$-twisted}\\ \text{K-theory of } G \end{array}\right).  \]
The construction involves an interesting family of algebraic Dirac operators parametrized by the space of connections on a $G$-bundle over $S^1$.  Computing the equivariant twisted K-theory of $G$ using techniques from algebraic topology, they are able to show that their map is an isomorphism.

It is less clear how to construct a map in the opposite direction (from twisted K-homology to $R_k(G)$) \emph{at the level of cycles}.  One goal of this article is to describe such a map for \emph{analytic cycles} or Fredholm modules, which are the cycles for the analytic description of twisted K-homology (cf. \cite{AtiyahKHomology, HigsonRoe, KasparovNovikov}).  A special class of analytic cycles are those which come from Baum-Douglas-type \emph{geometric cycles} (cf. \cite{BaumDouglas, BaumCareyWang}), and we also study the specialization of our map to such cycles and obtain a correspondingly more explicit description.
\ignore{\begin{theorem}
\label{Thm1}
Let $G$ be a compact, connected, simply connected, simple Lie group.  Let $\A$ be a twisting with Dixmier-Douady class $\tn{DD}(\A)=k+\hvee \in \bZ \simeq H^3_G(G,\bZ)$, $k>0$.  The map
\[ \scr{I} \colon \K_0^G(G,\A) \rightarrow R_k(G)\]
defined in detail in Section \ref{sec:DefI}, is inverse to the Freed-Hopkins-Teleman isomorphism.
\end{theorem}}

We should remark at the outset that we do not directly build a positive energy representation from a cycle, which would be interesting and perhaps preferable.  The output will instead be a formal character.  Let us give an overview of the construction.  The data used to describe a twist in the analytic picture is a $G$-equivariant Dixmier-Douady bundle $\A$ over $G$; this is a locally trivial bundle of elementary $C^\ast$ algebras over $G$, equipped with a $G$-action covering the conjugation action on the base.  Such a bundle has an invariant, the Dixmier-Douady class $\tn{DD}(\A) \in H^3_G(G,\bZ)\simeq \bZ$, and we assume $\tn{DD}(\A)=\ell>0$.  Consider an analytic cycle representing a class $x$ in the twisted K-homology group $\K_0^G(G,\A)$.  Restrict $x$ from $G$ to a tubular neighborhood $U$ of a maximal torus $T$ inside $G$.  Over $U$ we show that there is a Morita equivalent Dixmier-Douady bundle $\A_U$ which has an especially simple structure: its algebra of continuous sections can be presented as a twisted crossed product algebra $\Pi \ltimes_\tau C_0(\U)$, where $\tau$ is the twist, $\Pi$ is the integer lattice, and $\U$ is a $\Pi$-covering space of $U$.  Applying tools from KK-theory (a Green-Julg-type isomorphism and the analytic assembly map), we obtain an element in the K-theory of the group $C^\ast$ algebra for $T\ltimes \Pi^\tau$.  There is a map from the latter K-group into the space of formal characters $R^{-\infty}(T)$ for $T$.  The image of $\K_0^G(G,\A)$ under this composition is $R^{-\infty}(T)^{W_\aff-\anti,\,\ell}$, the subspace of formal characters that are alternating under the action of the affine Weyl group at level $\ell$.  For $\ell>\hvee$, the space of such characters is canonically isomorphic to $R_k(G)$, $k=\ell-\hvee$, via the Weyl-Kac character formula.

Our construction provides an elaboration of a remark made by Freed, Hopkins and Teleman in \cite[Remark 3.5]{FHTIII}.  They comment that there ought to be an inverse map from the twisted K-theory of $T$ to a suitable `representation ring' for $T\ltimes \Pi^\tau$ perhaps defined using $C^\ast$ algebras, involving an analogue of `integration over $\t$'.  Our `integration over $\t$' map is the analytic assembly map.

We study the specialization of our map to `D-cycles' in the sense of Baum-Carey-Wang \cite{BaumCareyWang}, which are an analogue of Baum-Douglas cycles in geometric K-homology \cite{BaumDouglas}.  A D-cycle for $\K_0(X,\A)$ is a 4-tuple $(M,E,\Phi,\S)$ consisting of a compact Riemannian manifold $M$, a Hermitian vector bundle $E$ over $M$, a continuous map $\Phi \colon M \rightarrow X$, and a Morita morphism $\S \colon \Cliff(TM) \dashrightarrow \Phi^\ast \A$.  If $\A=\ul{\bC}$ is trivial, $\S$ is equivalent to a spin-c structure on $M$, and we recover an ordinary Baum-Douglas cycle.

In the case of a D-cycle $(M,E,\Phi,\S)$ representing a class $x \in \K^G_0(G,\A)$, we prove that its image under our map is the $T$-equivariant $L^2$-index of a first-order elliptic operator on a $\Pi$-covering space of $\Phi^{-1}(U)$, where $U \supset T$ is a tubular neighborhood of the chosen maximal torus $T$ in $G$.  Let us give a summary of the proof.  Using $(M,E,\Phi,\S)$ we construct an analytic cycle $(H,\rho,\st{D})$ representing $x$: the Hilbert space $H$ is the space of $L^2$ sections of a smooth Hilbert bundle over $M$ and $\st{D}$ is a Dirac operator acting on smooth sections.  Because the bundle has infinite dimensional fibres, $\st{D}$ is not necessarily Fredholm, but the action of the $C^\ast$ algebra $C(\A)$ (continuous sections of $\A$) along the fibres provides the needed analytic control to make this a cycle.  After passing to the Morita equivalent Dixmier-Douady bundle over $U \supset T$, the fibres are replaced with copies of $L^2(\Pi)$ (tensored with a finite dimensional bundle); using a correspondence that is well-known (for example in the context of Atiyah's $L^2$-index theorem), the operator $\st{D}$ then has an alternate interpretation as a Dirac-type operator on a $\Pi$-covering space $\Y$ of $\Phi^{-1}(U)$.  Applying the analytic assembly map gives the $T$-equivariant $L^2$-index of this operator.

The initial motivation for this work was to understand the relationship between two approaches---one via $D$-cycles for twisted K-homology of $G$ \cite{MeinrenkenKHomology} and the other via index theory on non-compact manifolds \cite{LMSspinor,LSQuantLG}---to defining a representation-theoretic `quantization' of a Hamiltonian $LG$-space.  The construction of a suitable quantization has interesting applications, for example to the Verlinde formula for moduli spaces of flat connections on Riemann surfaces, cf. \cite{LecturesGroupValued} for an overview.  A corollary of our results is that the two approaches agree with each other.  Indeed for $x$ represented by a D-cycle, the first-order elliptic operator on $\Y$ mentioned above coincides with the operator studied in \cite{LSQuantLG}.  Our construction thus connects the index of this operator with the image of $x$ in $R_k(G)$ under the Freed-Hopkins-Teleman isomorphism.  
\ignore{
Let $(\M,\omega_{\M},\Phi_{\M})$ be a proper Hamiltonian $LG$-space, that is, a Banach manifold $\M$ equipped with a smooth $LG$-action, a symplectic form, and a proper moment map $\Phi_{\M} \colon \M \rightarrow L\g^\ast$ (see Section \ref{sec:HamLGSpace}).  The subgroup $\Omega G \subset LG$ of based loops acts freely and properly on $L\g^\ast$ and $\M$, and the smooth quotient $M=\M/\Omega G \rightarrow L\g^\ast/\Omega G=G$ is an example of a \emph{quasi-Hamiltonian $G$-space} \cite{AlekseevMalkinMeinrenken}.  It was shown in \cite{DDDFunctor} (\cite{LMSspinor} for an alternate proof) that quasi-Hamiltonian $G$-spaces naturally give rise to D-cycles for the twisted K-homology of $G$.  Based on this, Meinrenken in \cite{MeinrenkenKHomology} defined the quantization of a (pre-quantized) quasi-Hamiltonian $G$-space in terms of a push-forward map to twisted K-homology of $G$.  By the Freed-Hopkins-Teleman theorem, the result can be identified with an element of the Verlinde ring, hence seems a reasonable candidate for the quantization of the $LG$-space $\M$.

In \cite{LMSspinor,LSQuantLG} we constructed a finite dimensional spin-c submanifold $\Y \subset \M$ and then studied an index pairing with the Dirac operator, with the result being a formal character of $T$ alternating under the action of the affine Weyl group.  This suggests that alternatively one might define the quantization of $\M$ as the element of the Verlinde ring whose Weyl-Kac numerator equals this formal character.  
}

Throughout the paper we have restricted ourselves to the special case that $G$ is connected, simply connected and simple, but the methods likely generalize.  We will fairly easily be able to check that the map $\scr{I}\colon \K^G_0(G,\A) \rightarrow R^{-\infty}(T)^{W_\aff-\anti,\ell}$ that we construct is surjective.  With some additional effort and a little topology, together with a known (and relatively easy) case of the Baum-Connes conjecture, we could use the construction described here to prove a weak form of the Freed-Hopkins-Teleman theorem (that $\scr{I}$ is also injective modulo torsion at primes dividing the order of the Weyl group).  We hope to return to these questions in the future.

There is an overlap of some of our methods with interesting work of Doman Takata on Hamiltonian $LT$-spaces \cite{Takata1,Takata2}.  In particular, Takata also studies an assembly map into the K-theory of a twisted group $C^\ast$ algebra of $T \times \Pi$.  Takata has built infinite dimensional analogues of several well-known objects from index theory/non-commutative geometry in the setting of Hamiltonian $LT$-spaces.  It would be interesting to explore further connections with his work.

Sections 2 and 3 briefly introduce twisted K-homology, loop groups, and the Freed-Hopkins-Teleman theorem.  Section 4 contains some results on twisted convolution algebras and generalized fixed-point algebras.  In Section 5 we prove some basic facts about the $C^\ast$ algebra of the semi-direct product $T\ltimes \Pi^\tau$ that plays a key role.  In Section 6 we construct the map, denoted $\scr{I}$, from $\K_0^G(G,\A)$ to $R^{-\infty}(T)^{W_\aff-\anti, \, \ell}$, and prove that it is inverse to the Freed-Hopkins-Teleman map.  Section 7 studies the specialization of $\scr{I}$ to geometric cycles (D-cycles in the sense of Baum-Carey-Wang), and briefly describes the application to Hamiltonian loop group spaces.  For the reader's convenience, we have included an appendix with proofs of a couple of standard (but not so easy to find) results used in Section 7.

\vspace{0.3cm}

\noindent \textbf{Acknowledgements.}
I especially thank Eckhard Meinrenken and Yanli Song for interesting discussions about quantization of Hamiltonian $LG$-spaces over the past couple of years.  The work described in this paper is motivated by our joint work on spinor modules for Hamiltonian $LG$-spaces \cite{LMSspinor,LSQuantLG}.  I also thank Nigel Higson and Shintaro Nishikawa for helpful suggestions and for answering several questions about KK-theory. 

\vspace{0.3cm}

\noindent \textbf{Notation.}
The $C^\ast$ algebras of bounded (resp. compact) operators on a Hilbert space $H$ will be denoted $\bB(H)$ (resp. $\bK(H)$).

If $(V,g)$ is a finite dimensional real Euclidean vector space, $\Cliff(V)$ denotes the complex Clifford algebra of $V$, the $\bZ_2$-graded complex algebra generated in degree 1 by the elements $v \in V$ subject to the relation $v^2=\|v\|^2$.  For $V$ a real Euclidean vector bundle over $M$, $\Cliff(V)$ denotes the bundle of algebras with fibres $\Cliff(V)_m=\Cliff(V_m)$.  On a Riemannian manifold $M$, we write $\Cl(M)$ for the algebra of continuous sections of $\Cliff(TM)$ vanishing at infinity.

If $K$ is a compact Lie group, $\Irr(K)$ denotes the set of isomorphism classes of irreducible representations of $K$, and $R(K)$ is the representation ring.  The formal completion $R^{-\infty}(K)=\bZ^{\Irr(K)}$ consists of formal infinite linear combinations of irreducibles $\pi \in \Irr(K)$ with coefficients in $\bZ$.  When discussing a $U(1)$ central extension $\Gamma^\tau$ of a group $\Gamma$, we use the notation $\hgamma$ to denote some lift to $\Gamma^\tau$ of an element $\gamma \in \Gamma$.

Throughout $G$ denotes a compact, connected, simply connected, simple Lie group with Lie algebra $\g$.  Let $T \subset G$ be a maximal torus with Lie algebra $\t$.  We fix a positive Weyl chamber $\t_+$, and let $\R_+$ (resp. $\R_-$) denote the positive (resp. negative) roots.  The half sum of the positive roots is denoted $\rho$, and $\hvee$ is the dual Coxeter number of $G$.  Since $G$ is simply connected, the integer lattice $\Pi=\ker(\exp \colon \t \rightarrow T)$ coincides with the coroot lattice.  The dual $\Pi^\ast=\Hom(\Pi,\bZ)$ is the (real) weight lattice.  There is a unique invariant inner product $B$ on $\g$, the \emph{basic} inner product, with the property that the squared length of the short co-roots is $2$.  We often use the basic inner product to identify $\g \simeq \g^\ast$, and we sometimes write $B^\flat$, $B^\sharp$ for the musical isomorphisms when we want to emphasize this.  The basic inner product has the property that $B(\Pi,\Pi)\subset \bZ$, and thus $B^\flat(\Pi)\subset \Pi^\ast$.

\section{Twisted K-homology}
Here we give a brief introduction to the analytic description of twisted K-homology.  Our discussion is similar to \cite{MeinrenkenKHomology, MeinrenkenConjugacyClasses} where one can find further details.  For further background on analytic K-homology and KK-theory, see for example \cite{HigsonRoe, HigsonPrimer, KasparovNovikov}.  We also recall the definition of `D-cycles' due to Baum, Carey and Wang \cite{BaumCareyWang}, which are a version of Baum-Douglas-type geometric cycles \cite{BaumDouglas} for twisted K-homology.  

Let $X$ be a locally compact space.  A \emph{Dixmier-Douady bundle} over $X$ is a locally trivial bundle of $C^\ast$ algebras $\A \rightarrow X$, with typical fibre isomorphic to the compact operators $\bK(H)$ for a (separable) Hilbert space $H$, and structure group the projective unitary group $PU(H)$ with the strong operator topology.  Restricting to a sufficiently small open $U \subset X$, $\A|_U$ is isomorphic to $\bK(\H)$ for a bundle of Hilbert spaces $\H \rightarrow U$, but this need not be true globally.  The notation $\A^{\op}$ denotes the Dixmier-Douady bundle obtained by taking the opposite algebra structure on the fibres.  The tensor product $\A_0 \otimes \A_1$ of Dixmier-Douady bundles is again a Dixmier-Douady bundle.

A \emph{Morita morphism} $\S \colon \A_0 \dashrightarrow \A_1$ between Dixmier-Douady bundles over $X$ is a bundle of $\A_1 \otimes \A_0^{\op}$ modules $\S \rightarrow X$, locally modelled on the $\bK(H_1)-\bK(H_0)$ bimodule $\bK(H_0,H_1)$.  In the special case $\A_1=\underline{\bC}$, $\S$ is called a \emph{Morita trivialization} of $\A_0$.  Any two Morita morphisms $\A_0 \dashrightarrow \A_1$ are related by tensoring with a line bundle; if the line bundle is trivial, one says the Morita morphisms are \emph{2-isomorphic}.

By a theorem of Dixmier and Douady \cite{DixmierDouady}, Morita isomorphism classes of Dixmier-Douady bundles are classified by a degree 3 integral cohomology class $\tn{DD}(\A) \in H^3(X,\bZ)$ known as the \emph{Dixmier-Douady class}.  The Dixmier-Douady class satisfies
\[ \tn{DD}(\A^{\op})=-\tn{DD}(\A), \qquad \tn{DD}(\A_0 \otimes \A_1)=\tn{DD}(\A_0)+\tn{DD}(\A_1).\]
There are modest generalizations to the case where the fibres of $\A$ (resp. $\S$) carry $\bZ_2$-gradings; in this case $\A$ (resp. $\S$) is locally modelled on $\bK(H)$ for a $\bZ_2$-graded Hilbert space $H$ (resp. $\bK(H_0,H_1)$ with $H_0$, $H_1$ being $\bZ_2$-graded Hilbert spaces), and the Dixmier-Douady class $\tn{DD}(\A) \in H^3(X,\bZ)\oplus H^1(X,\bZ_2)$.  If $X$ carries an action of a compact group $G$, one can define $G$-equivariant Dixmier-Douady bundles, which are classified up to $G$-equivariant Morita morphisms by classes in the analogous equivariant cohomology groups.

The $C^\ast$ algebraic definition of twisted K-theory goes back to Donovan-Karoubi \cite{DonovanKaroubi} (in the case of a torsion Dixmier-Douady class) and Rosenberg \cite{RosenbergContinuousTrace} (the general case); see also \cite{AtiyahSegalTwistedK,KaroubiOldNew}.  Let $\A$ be a $G$-equivariant $\bZ_2$-graded Dixmier-Douady bundle and $C_0(\A)$ the $\bZ_2$-graded $G$-$C^\ast$ algebra of continuous sections of $\A$ vanishing at infinity.  One defines the $G$-equivariant $\A$-\emph{twisted K-homology} of $X$ to be the $G$-equivariant analytic K-homology of this $C^\ast$ algebra:
\[ \K_i^G(X,\A)=\KK^i_G(C_0(\A),\bC), \qquad i=0,1\]
where $\KK_G^i(A,B)$ is Kasparov's KK-theory (cf. \cite{KasparovNovikov}).  This definition is well known to be equivalent to Atiyah-Segal's \cite{AtiyahSegalTwistedK} description in terms of homotopy classes of continuous sections of bundles with typical fibre the Fredholm operators on a Hilbert space.

\begin{remark}
\label{rem:CanonicalIso}
A Morita morphism $\A_0 \dashrightarrow \A_1$ defines an invertible element in the group $\KK^0_G(C_0(\A_1),C_0(\A_0))$, and hence an isomorphism between the corresponding twisted K-homology groups.  Thus, the resulting groups depend only on the Dixmier-Douady class of $\A$.  Note however that there may be no \emph{canonical} isomorphism; different Morita morphisms can lead to genuinely different maps.  Any two Morita morphisms are related by tensoring with a $\bZ_2$-graded line bundle, hence the set of Morita morphisms is a torsor for $H^2_G(X,\bZ)\times H^0_G(X,\bZ_2)$.  
\end{remark}

\begin{example}
\label{ex:DeRhamDirac}
An important example of a $\bZ_2$-graded Dixmier-Douady bundle is the Clifford algebra bundle $\Cliff(TM)$ of a Riemannian manifold $M$.  Kasparov's fundamental class $[\scr{D}]$ is the class in the twisted K-homology group $\K_0(M,\Cliff(TM))=\KK(\Cl(M),\bC)$ represented by the de Rham-Dirac operator $\scr{D}=d+d^\ast$ acting on smooth differential forms over $M$ (cf. \cite[Definition 4.2]{KasparovNovikov}).  A Morita trivialization $\S \colon \Cliff(TM)\dashrightarrow \underline{\bC}$ is the same thing as a spinor module for $\Cliff(TM)$.  $\S$ defines an invertible element $[\S] \in \KK(C_0(M),\Cl(M))$, and the KK product $[\S]\otimes_{\Cl(M)} [\scr{D}] \in \KK(C_0(M),\bC)$ is the class represented by a spin-c Dirac operator for $\S$.  More generally, twisting $\scr{D}$ by a complex vector bundle $E$, one obtains a class $[\scr{D}^E] \in \KK(\Cl(M),\bC)$, and the KK product $[\S]\otimes_{\Cl(M)}[\scr{D}^E]$ is the class represented by the Dirac operator coupled to $E$.
\end{example}

\subsection{Geometric twisted K-homology}
Baum, Carey and Wang \cite{BaumCareyWang} describe a `geometric' approach to twisted K-homology, in the spirit of Baum-Douglas \emph{geometric K-homology} \cite{BaumDouglas} (see also \cite{BaumHigsonSchick}).  Actually in \cite{BaumCareyWang} two types of cycles for twisted geometric K-homology are discussed: `K-cycles' versus `D-cycles'.  The geometric K-homology groups defined by both types of cycles admit natural maps to the analytic K-homology group described above.  In this paper we will only discuss D-cycles, and only use the even case.
\begin{definition}\cite{BaumCareyWang}
Let $\A$ be a $G$-equivariant $\bZ_2$-graded Dixmier-Douady bundle over a locally finite $G$-CW complex $X$.  An (even) \emph{D-cycle} for $(X,\A)$ is a 4-tuple $(M,E,\Phi,\S)$ where
\begin{itemize}
\item $M$ is an even-dimensional smooth closed $G$-manifold, with a $G$-invariant Riemannian metric
\item $\Phi \colon M \rightarrow X$ is a $G$-equivariant continuous map
\item $E$ is a $G$-equivariant Hermitian vector bundle over $M$
\item $\S \colon \Cliff(TM) \dashrightarrow \Phi^\ast \A$ is a $G$-equivariant Morita morphism.
\end{itemize}
\end{definition}
\begin{remark}
The terminology `D-cycle' comes from string theory.  If $M$ is orientable, the Dixmier-Douady class of $\Cliff(TM)$ is the third integral Stieffel-Whitney class $W_3(M)$ (the obstruction to the existence of a spin-c structure on $M$).  The existence of $\S$ implies
\[ \Phi^\ast \tn{DD}(\A)=W_3(M),\]
which is called the `Freed-Witten anomaly cancellation condition' in string theory.
\end{remark}

The \emph{geometric twisted K-homology} $\K^G_{\tn{geo},i}(X,\A)$ of $X$ is the set of D-cycles modulo an equivalence relation analogous to Baum-Douglas geometric K-homology (generated by suitable versions of `disjoint union equals direct sum', `bordism', and `bundle modification'), see \cite{BaumCareyWang}.  There is a functorial map
\begin{equation} 
\label{eqn:GeoAnalyticMap}
\K^G_{\tn{geo},i}(X,\A) \rightarrow \K^G_i(X,\A)
\end{equation}
which is straight-forward to describe at the level of cycles.  We will only use the even case $i=0$ here; the odd case is similar.  Let $[\scr{D}^E] \in \KK_G(\Cl(M),\bC)$ be the class of the de Rham-Dirac operator on $M$, coupled to the vector bundle $E$.  The pair $(\Phi,\S)$ defines a push-forward map 
\[ (\Phi,\S)_\ast \colon \KK_G(\Cl(M),\bC) \rightarrow \KK_G(C_0(\A),\bC) \]
given as the composition of the Morita morphism $\Cliff(TM) \dashrightarrow \Phi^\ast \A$, with the map induced by the $^\ast$-homomorphism
\[ \Phi^\ast \colon C_0(\A) \rightarrow C_0(\Phi^\ast \A).\]
The image of $[(M,E,\Phi,\S)]$ in $\K^G_0(X,\A)$ is the push-forward:
\begin{equation} 
\label{eqn:PushNotation}
(\Phi,\S)_\ast [\scr{D}^E].
\end{equation}
The push-forward can alternately be expressed as a KK product
\begin{equation} 
\label{eqn:ProdNotation}
[\S] \otimes [\scr{D}^E] 
\end{equation}
where $[\S]\in \KK_G(C_0(\A),\Cl(M))$ is the class defined by the triple $(C_0(\S),\Phi^\ast,0)$.

\begin{remark}
A proof that the map \eqref{eqn:GeoAnalyticMap} is an isomorphism has been announced by Baum, Joachim, Khorami and Schick \cite{BJKS}, at least for the non-equivariant case.
\end{remark}

\subsection{Twisted K-homology of $G$.}
Let $G$ be a compact, connected, simply connected, simple Lie group acting on itself by conjugation.  Then $H^3_G(G,\bZ)\simeq \bZ$, while $H^2_G(G,\bZ)=H^1_G(G,\bZ_2)=H^0_G(G,\bZ_2)=0$.  There is a canonical generator of $H^3_G(G,\bZ)$; in de Rham cohomology, it is represented by the equivariant Cartan 3-form
\[ \eta_G(\xi)=\eta-\tfrac{1}{2}B(\theta^L+\theta^R,\xi), \qquad \eta=\tfrac{1}{12}B(\theta^L, [\theta^L,\theta^L]),\]
where $\xi \in \g$ and $\theta^L$ (resp. $\theta^R$) denotes the left (resp. right) invariant Maurer-Cartan form.  Thus $G$-equivariant Dixmier-Douady bundles $\A$ over $G$ are classified up to Morita equivalence by an integer $\ell \in \bZ$, and moreover any two Morita morphisms are 2-isomorphic, see Remark \ref{rem:CanonicalIso}.  Although we will not use it, it is known that the twisted K-homology group $\K^G_0(G,\A)$ carries a ring structure; in this picture the ring structure originates from a canonical Morita morphism $\A \boxtimes \A \dashrightarrow \tn{Mult}^\ast \A$, where $\tn{Mult}\colon G \times G \rightarrow G$ is the group multiplication, cf. \cite{FHTI,MeinrenkenConjugacyClasses}.  

\section{Loop groups and the Freed-Hopkins-Teleman Theorem}\label{sec:loopgroup}
In this section we briefly introduce the loop group $LG$ and its important class of projective positive energy representations, cf. \cite{PressleySegal}.  We then recall the Freed-Hopkins-Teleman theorem, which relates loop groups to twisted K-homology.

To obtain a Banach-Lie group, we take $LG$ to consist of maps $S^1=\bR/\bZ \rightarrow G$ of some fixed Sobolev level $s>\tfrac{1}{2}$.  The basic inner product defines a central extension of the Lie algebra $L\g$ by $\bR$, with cocycle
\begin{equation} 
\label{eqn:cocycleLG}
c(\xi_1,\xi_2)=\int_0^1 B(\xi_1(t), \xi_2^\prime(t))\,\,dt.
\end{equation}
This extension integrates to a $U(1)$ central extension $LG^\bas$ of $LG$, that we will call the \emph{basic central extension}.  For $G$ connected, simple, and simply connected, $U(1)$ central extensions of $LG$ are uniquely determined by their Lie algebra cocycle, which must be an integer multiple of the generator \eqref{eqn:cocycleLG}; thus $U(1)$ central extensions are classified by $\bZ$, with $LG^\bas$ corresponding to the generator $1 \in \bZ$.
\ignore{
$U(1)$ central extensions of $LG$ are classified by an integer $k$ called the \emph{level}, and the basic central extension corresponds to the generator $k=1$.}

For later reference, note that the loop group can be written as a semi-direct product $LG=G\ltimes \Omega G$, where $\Omega G=\{\gamma \in LG|\gamma(0)=\gamma(1)\}$ is the based loop group, and $G \hookrightarrow LG$ identifies $G$ with the constant loops in $LG$.  Our assumptions on $G$ imply that any $U(1)$ central extension of $G$ is trivial, hence in particular the restriction of $LG^\bas$ to the constant loops is trivial.

Let $T \subset G$ be a fixed maximal torus and let $\Pi=\ker(\exp \colon \t \rightarrow T)$ be the integral lattice.  The product group $T \times \Pi$ may be viewed as a subgroup of $LG$, where $T$ is embedded as constant loops and $\Pi$ as exponential loops: $\eta \in \Pi$ corresponds to the loop $s \in \bR/\bZ \mapsto \exp(s \eta) \in T$.  The restriction of the central extension $LG^\bas$ to $T \times \Pi$ is a central extension
\[ 1 \rightarrow U(1) \rightarrow T\ltimes \Pi^\bas \rightarrow T \times \Pi \rightarrow 1.\]
We discuss the subgroup $T\ltimes \Pi^\bas \subset LG^\bas$ in detail in Section \ref{sec:SemiDirect}.

\subsection{Positive energy representations.}
The loop group has a much-studied class of projective representations known as \emph{positive energy} representations, which have a detailed theory parallel to the theory for compact groups cf. \cite{KacBook,PressleySegal}.  Let $S^1_{\rot} \ltimes LG$ denote the semi-direct product constructed from the action of $S^1$ on $LG$ by rigid rotations.  This action lifts to an action on the basic central extension.  A positive energy representation is a representation of $LG^{\bas}$ which extends to a representation of the semi-direct product $S^1_{\rot}\ltimes LG^{\bas}$ such that the weights of $S^1_{\rot}$ are bounded below.  One can always tensor a positive energy representation by a 1-dimensional representation of $S^1_{\rot}$, hence one often normalizes positive energy representations by requiring that the minimal $S^1_{\rot}$ weight is $0$, and we always assume this.  

For an irreducible positive energy representation, the central circle of $LG^{\bas}$ acts by a fixed weight $k \ge 0$ called the \emph{level}.  There are finitely many irreducible positive energy representations at any fixed level, parametrized by the `level $k$ dominant weights': weights $\lambda \in \Pi^\ast \cap \t^\ast_+$ satisfying $B(\lambda,\theta) \le k$, where $\theta \in \R_+$ is the highest root of $\g$.  Equivalently the level $k$ weights $\Pi^\ast_k=\Pi^\ast \cap k\a$, where $\a \subset \t_+$ is the fundamental alcove, which we identify with a subset of $\t^\ast$ using the basic inner product.

Let $R_k(G)$ denote the free abelian group of rank $\#(\Pi_k^\ast)$ generated by $\bZ$-linear combinations of the level $k$ irreducible positive energy representations.  There is a canonical isomorphism (`holomorphic induction', cf. \cite{FHTI})
\[ R_k(G) \simeq R(G)/I_k(G) \]
where $R(G)$ is the representation ring of $G$ and $I_k(G)$ is the \emph{Verlinde ideal} consisting of characters vanishing on the conjugacy classes of the elements
\[ \exp\Big(\frac{\xi+\rho}{k+\hvee}\Big), \qquad \xi \in \Pi_k^\ast.\]
In particular $R_k(G)$ is a ring, known as the level $k$ \emph{Verlinde ring}.

There is an alternate description of $R_k(G)$ that will be crucial for us later on; this description plays a significant role in the proof of the Freed-Hopkins-Teleman Theorem as well.  An element of $R_k(G)$ is uniquely determined by its \emph{multiplicity function}, a map
\[ m \colon \Pi^\ast_k \rightarrow \bZ.\]
It is known that $\Pi^\ast_k$ is precisely the set of weights contained in the interior of the shifted, scaled alcove $(k+\hvee)\a-\rho$.  The latter is a fundamental domain for the $\rho$-shifted level $(k+\hvee)$ action of the affine Weyl group $W_{\aff}=W\ltimes \Pi$, given by
\begin{equation} 
\label{eqn:ShiftedAction}
w\bullet_{k+\hvee} \xi=(\ol{w},\eta) \bullet_{k+\hvee} \xi=\ol{w}(\xi+\rho)-\rho+(k+\hvee)\eta, \qquad \xi \in \t^\ast, \quad \ol{w} \in W, \eta \in \Pi.
\end{equation}
Thus, $m$ has a \emph{unique} extension to a map
\[ m \colon \Pi^\ast \rightarrow \bZ \]
which is \emph{alternating} under \eqref{eqn:ShiftedAction}, i.e.
\[ m(w\bullet_{k+\hvee} \xi)=(-1)^{l(w)}m(\xi),\]
where $l(w)$ is the length of the affine Weyl group element $w$.  The extension of $m$ vanishes on the boundary of the fundamental domain $(k+\hvee)\a-\rho$.  This defines an isomorphism of abelian groups
\begin{equation} 
\label{eqn:AlternatingFormal}
R_k(G) \xrightarrow{\sim} R^{-\infty}(T)^{W_{\aff}-\anti,\,(k+\hvee)}
\end{equation}
where the right hand side denotes the formal characters of $T$ which are alternating under the action \eqref{eqn:ShiftedAction}.

That \eqref{eqn:AlternatingFormal} is an isomorphism can be deduced more or less immediately from the Weyl-Kac character formula (cf. \cite{KacBook,PressleySegal}).  A positive energy representation has a formal character $\chi \in R^{-\infty}(S^1_{\rot}\times T)$ given by a formula analogous to the Weyl character formula for compact Lie groups, but with the numerator and denominator both being formal infinite expressions.  As in the Weyl character formula, the denominator $\Delta$ is a universal expression (the same for any $\chi \in R_k(G)$); multiplying $\chi$ by $\Delta$ and then restricting to $q=1 \in S^1_{\rot}$, one obtains an element $(\chi \cdot \Delta)|_{q=1} \in R^{-\infty}(T)^{W_{\aff}-\anti,\,(k+\hvee)}$, and this correspondence is one-one.

\subsection{Dixmier-Douady bundles from positive energy representations.}\label{sec:DDoverG}
Loop groups are closely related to twisted K-theory of $G$.  One manifestation of this is that positive energy representations can be used to construct Dixmier-Douady bundles over $G$.

Let $\ell>0$ and let $LG^\tau$ denote the central extension of $LG$ corresponding to $\ell$ times the basic inner product ($\ell=1$ corresponds to $LG^{\bas}$).  Let $V$ be a level $\ell$ positive energy representation, or in other words, a positive energy representation of $LG^\tau$ such that the central circle acts with weight $1$.  The dual space $V^\ast$ carries a negative energy representation such that the central circle acts with weight $-1$.  Let $PG$ denote the space of quasi-periodic paths in $G$ of Sobolev level $s > \tfrac{1}{2}$, that is, $PG$ is the space of paths $\gamma \colon \bR \rightarrow G$ such that $\gamma(t)\gamma(t+1)^{-1}$ is a fixed element of $G$, independent of $t \in \bR$.  The group $LG\times G$ acts on $PG$, with $LG$ acting by right multiplication, and $G$ by left multiplication (cf. \cite{LMSspinor} for further discussion).  The map
\[ q \colon \gamma \in PG \mapsto \gamma(t)\gamma(t+1)^{-1} \in G \]
makes $PG$ into a $G$-equivariant principal $LG$-bundle over $G$.  The adjoint action of $LG^\tau$ on the algebra of compact operators $\bK(V^\ast)$ descends to an action of $LG$, and the associated bundle
\begin{equation} 
\label{eqn:defA}
\A=PG \times_{LG} \bK(V^\ast) 
\end{equation}
is a Dixmier-Douady bundle over $G$ such that $\tn{DD}(\A)=\ell \in \bZ \simeq H^3_G(G,\bZ)$, cf. \cite{MeinrenkenConjugacyClasses}.

\subsection{The Freed-Hopkins-Teleman theorem.}
The following is a special case (for $G$ connected, simply connected, simple) of the Freed-Hopkins-Teleman theorem.
\begin{theorem}[Freed-Hopkins-Teleman \cite{FHTI,FHTII,FHTIII}]
Let $k>0$, and let $\hvee$ be the dual Coxeter number of $G$.  Let $\A$ be a $G$-equivariant Dixmier-Douady bundle over $G$ with $\tn{DD}(\A)=k+\hvee \in \bZ \simeq H^3_G(G,\bZ)$.  The group $\K^G_1(G,\A)=0$, and there is an isomorphism of rings
\[ R_k(G) \simeq \K^G_0(G,\A).\]
\end{theorem}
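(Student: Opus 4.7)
The plan is to prove the theorem by constructing a map $\scr{I}\colon \K^G_0(G,\A) \to R_k(G)$ at the level of analytic cycles and showing it inverts the Freed-Hopkins-Teleman map. First I would use the Weyl-Kac identification \eqref{eqn:AlternatingFormal} to rewrite the target as $R^{-\infty}(T)^{W_\aff-\anti,\,\ell}$ with $\ell=k+\hvee$, so the problem becomes producing an alternating formal character from a Fredholm module over $C_0(\A)$.

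Given such a cycle, the first move is to restrict the action of $C_0(\A)$ from $G$ to a $G$-invariant tubular neighborhood $U$ of the maximal torus $T$. By the excision-style properties of twisted K-homology, and by viewing $U$ as a $G$-equivariant slice over $T$, the class is controlled by its $T$-equivariant image in $\K^T_0(U,\A|_U)$. Over $U$ one expects a Morita equivalent Dixmier-Douady bundle $\A_U$ whose section algebra is presented as a twisted crossed product $\Pi \ltimes_\tau C_0(\U)$, where $\U \to U$ is a $\Pi$-covering and $\tau$ is a $U(1)$-twist at level $\ell$. Next I would apply a Green-Julg-type isomorphism to trade the $T$-equivariance for K-theory of a crossed product $C^\ast$ algebra, then invoke the analytic assembly map for the proper $T\ltimes \Pi$-action on $\U$; the output is a class in $\K_0(C^\ast(T\ltimes \Pi^\tau))$. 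Decomposing by $T$-weights and using the splitting of representations of the central extension $T\ltimes \Pi^\tau$ gives a formal character of $T$, defining $\scr{I}$.

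Two things must then be verified. First, that the image always lies in $R^{-\infty}(T)^{W_\aff-\anti,\,\ell}$. The $\Pi$-alternation at level $\ell$ should fall out of the structure of the twisted group algebra $C^\ast(\Pi^\tau)$ together with the $\rho$-shift absorbed into the dual Coxeter contribution to $\ell=k+\hvee$; the $W$-alternation is more subtle and I would extract it from the action of the Weyl group $N_G(T)/T$ on the tubular neighborhood construction, using that the FHT class is supported (up to Morita equivalence) on the regular part of $T$ whose components are permuted by $W$. Second, that $\scr{I}$ inverts the FHT map. For this I would evaluate $\scr{I}$ on the explicit FHT cycles built, for each level-$k$ positive energy representation $V$, out of the Dixmier-Douady bundle $\A=PG\times_{LG}\bK(V^\ast)$ of Section \ref{sec:DDoverG} and the family of algebraic Dirac operators on $PG$ parametrized by connections on the trivial $G$-bundle over $S^1$.

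The main obstacle will be this last verification: tracing the FHT Dirac family through restriction to $U$, the Morita presentation on $\U$, and the analytic assembly map, and matching the answer with the Weyl-Kac numerator of the character of $V$. Once one has pinned down how the $\rho$-shift and the finite-dimensional spinor bundle appearing in the assembly enter, the resulting formal character should be exactly the Weyl-Kac numerator, which under \eqref{eqn:AlternatingFormal} corresponds to $[V]\in R_k(G)$. Since $R_k(G)$ is free abelian of rank $\#\Pi^\ast_k$ with the $[V]$ as a basis, agreement of $\scr{I}\circ \tn{FHT}$ with the identity on this basis completes the proof that $\scr{I}$ is a two-sided inverse, which simultaneously yields the isomorphism $R_k(G)\simeq \K^G_0(G,\A)$ and the vanishing of $\K^G_1(G,\A)$ (the latter by a parallel odd-degree argument showing $\scr{I}$ is surjective with trivial target).
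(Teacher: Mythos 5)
There is a genuine gap, and it is worth noting first that the paper does not actually prove this statement: it is quoted as the theorem of Freed--Hopkins--Teleman with a citation to \cite{FHTI,FHTII,FHTIII}, and the paper's own contribution (Section \ref{sec:defI} and Proposition \ref{prop:InverseFHT}) is only to construct the cycle-level map $\scr{I}$ and to check that it inverts the already-established FHT isomorphism. Your proposal reconstructs that construction quite faithfully, but it does not prove the theorem. The problem is logical: computing $\scr{I}$ on the explicit classes $x_\lambda$ pushed forward from $\K^G_0(\pt)$ and matching the answer with the Weyl--Kac numerator of $R_\lambda$ shows that $\scr{I}\circ\tn{FHT}$ is the identity on $R_k(G)$, hence that $\tn{FHT}$ is injective and $\scr{I}$ is surjective onto $R^{-\infty}(T)^{W_\aff-\anti,\,\ell}$. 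It does \emph{not} show that $\tn{FHT}$ is surjective, i.e.\ that the $x_\lambda$ generate $\K^G_0(G,\A)$; equivalently, it does not show that $\scr{I}$ is injective. That is precisely the hard half of the theorem, and it requires an independent computation of the twisted K-group, which Freed--Hopkins--Teleman carry out with algebraic-topological methods (a Mayer--Vietoris/spectral-sequence argument over the alcove decomposition of $G$). The paper is explicit about this limitation: it remarks that its arguments show only that $\scr{I}$ is surjective, and that even with additional topology and a case of the Baum--Connes conjecture one would obtain injectivity only modulo torsion at primes dividing the order of the Weyl group.

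Two further points. Your closing claim that $\K^G_1(G,\A)=0$ follows ``by a parallel odd-degree argument showing $\scr{I}$ is surjective with trivial target'' is not an argument: a surjection onto the zero group carries no information about the source; you would need injectivity of an odd-degree analogue of $\scr{I}$, which is again the missing ingredient. Finally, the statement asserts an isomorphism of \emph{rings}, and your proposal never addresses the multiplicative structure (the Pontryagin product on $\K^G_0(G,\A)$ coming from the Morita morphism $\A\boxtimes\A\dashrightarrow\tn{Mult}^\ast\A$ versus fusion in $R_k(G)$); the construction of $\scr{I}$ as given is purely additive.
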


Let $\iota \colon \{e \} \hookrightarrow G$ be the inclusion of the identity element in $G$.  Consider the model \eqref{eqn:defA} for $\A$.  The Hilbert space $V^\ast$ gives a (canonical) $G$-equivariant Morita trivialization of $\iota^\ast \A$.  Freed-Hopkins-Teleman prove that their isomorphism $R_k(G) \rightarrow \K_0^G(G,\A)$
moreover fits into a commutative diagram
\begin{equation}
\label{diagram:FHT}
\begin{CD}
R(G) @>>> R_k(G)\\
@V \simeq VV      @V \simeq VV\\
\K_0^G(\pt)@>(\iota,V^\ast)>>\K_0^G(G,\A)   
\end{CD}
\end{equation}
where the top horizontal arrow is the quotient map and the bottom horizontal arrow is induced by the evaluation map $\iota^\ast \colon C(\A) \rightarrow \A|_{e}$ composed with the Morita trivialization $V^\ast \colon \A|_{e} \dashrightarrow \bC$. 

\section{Crossed products and twisted K-homology} \label{sec:CrossProdDD}
In this section we describe some general facts involving crossed product algebras, central extensions, and generalized fixed-point algebras.  Throughout this section $\Gamma$, $S$, $N$ are locally compact, second countable topological groups equipped with left Haar measure, and $A$ is a separable $C^\ast$ algebra.

\subsection{Twisted crossed-products.} \label{sec:TwistedCross}
Let $\Gamma$ be a locally compact group with left invariant Haar measure, and let $\Gamma^\tau$ be a $U(1)$-central extension:
\[ 1 \rightarrow U(1) \rightarrow \Gamma^\tau \rightarrow \Gamma \rightarrow 1.\]
Normalize Haar measure on $\Gamma^\tau$ such that the integral of a function over $\Gamma^\tau$ is given by first averaging over $U(1)$ (using normalized Haar measure) followed by integration over $\Gamma$.  A choice of section $\Gamma \rightarrow \Gamma^\tau$ is not needed.  In detail, for $f \in C_c(\Gamma^\tau)$ let
\begin{equation} 
\label{eqn:AvgU1}
\bar{f}(\hgamma)=\int_{U(1)} f(z\hgamma)\, dz.
\end{equation}
Then $\bar{f}$ is a $U(1)$-invariant function on $\hGamma$ so descends to a function on $\Gamma$, and
\begin{equation} 
\label{eqn:IntGamma}
\int_{\Gamma^\tau} f(\hgamma)\, d\hgamma=\int_\Gamma \bar{f}(\gamma)\,d\gamma.
\end{equation}

Let $A$ be a $\Gamma$-$C^\ast$ algebra.  Note that $A$ can be regarded as a $\Gamma^\tau$-$C^\ast$ algebra such that the central circle in $\Gamma^\tau$ acts trivially.  The (maximal) crossed product algebra $\Gamma^\tau \ltimes A=C^\ast(\Gamma^\tau,A)$ (we use both notations interchangeably) decomposes into a direct sum of its homogeneous ideals
\begin{equation} 
\label{eqn:Homogeneous}
\Gamma^\tau \ltimes A=\bigoplus_{n \in \bZ} (\Gamma^\tau \ltimes A)_{(n)} 
\end{equation}
where $(\Gamma^\tau \ltimes A)_{(n)}$ denotes the norm closure (in the maximal crossed product algebra $\Gamma^\tau \ltimes A$) of the set of compactly supported functions $a \in C_c(\Gamma^\tau,A)$ satisfying
\[ a(z^{-1}\wh{\gamma})=z^na(\wh{\gamma}), \qquad z \in U(1), \wh{\gamma} \in \Gamma^\tau.\]
There is a $^\ast$-homomorphism from $C^\ast(U(1))$ into the multiplier algebra $M(\Gamma^\tau \ltimes A)$ (cf. \cite[II.10.3.10-12]{BlackadarCAlgebras}) making $\Gamma^\tau \ltimes A$ into a $C^\ast(U(1))=C_0(\bZ)$-algebra, and the ideals $(\Gamma^\tau \ltimes A)_{(n)}$ are the fibres.  The decomposition \eqref{eqn:Homogeneous} is also not difficult to prove directly.  A short calculation using \eqref{eqn:AvgU1} shows that the $(\Gamma^\tau \ltimes A)_{(n)}$ are 2-sided ideals, and hence one has a $^\ast$-homomorphism from the right hand side of \eqref{eqn:Homogeneous} to $\Gamma^\tau \ltimes A$.  One also has a $^\ast$-homomorphism in the opposite direction, given by `taking Fourier coefficients'.  For further details see for example \cite[Proposition 3.2]{LaurentTuXuDiffStacks} or \cite{Takata1}.

\begin{definition}
\label{def:TwistedCross}
We define the $\tau$-\emph{twisted crossed product algebra} $\Gamma \ltimes_{\tau} A$ to be the ideal
\[ \Gamma \ltimes_\tau A=(\Gamma^\tau \ltimes A)_{(1)}.\]
The special case $A=\bC$ gives the twisted group $C^\ast$ algebra
\[ C^\ast_\tau(\Gamma)=C^\ast(\Gamma^\tau)_{(1)}.\]
\end{definition}
\begin{remark}
\label{rem:ChooseSection}
One often sees the twisted crossed-product algebra defined in terms of a cocycle for the central extension, cf. \cite{MarcolliMathaiTwistedGroup}.  One can translate to this definition by choosing a section $\Gamma \rightarrow \Gamma^\tau$.  One reason we take the approach above is that later on we will consider the action of a second group $S \circlearrowright \Gamma \ltimes_\tau A$, and it seems slightly awkward to describe this in terms of a section $\Gamma \rightarrow \Gamma^\tau$; for example, it is not clear to us that one can find an $S$-invariant section.
\end{remark}

The twisted crossed product algebra $\Gamma \ltimes_\tau A$ has the important universal property that non-degenerate $^\ast$-representations of $\Gamma \ltimes_\tau A$ are in 1-1 correspondence with \emph{covariant pairs} $(\pi_A,\pi_\Gamma^\tau)$, where $\pi_A$ is a $^\ast$-representation of $A$, $\pi_\Gamma^\tau$ is representation of $\Gamma^\tau$ such that the central circle acts with weight $1$ (a $\tau$-projective representation of $\Gamma$), and
\begin{equation} 
\label{eqn:covpair}
\pi_\Gamma^\tau(\wh{\gamma})\pi_A(a)\pi_\Gamma^\tau(\wh{\gamma})^{-1}=\pi_A(\hgamma \cdot a)
\end{equation}
for all $\hgamma \in \Gamma^\tau$, $a \in A$.

The space $L^2(\Gamma^\tau)$ splits into an $\ell^2$-direct sum of its homogeneous subspaces
\begin{equation} 
\label{eqn:L2HomogSub}
L^2(\Gamma^\tau)=\bigoplus_{n \in \bZ} L^2(\Gamma^\tau)_{(n)},
\end{equation}
where $L^2(\Gamma^\tau)_{(n)}$ denotes the subspace of $L^2(\Gamma^\tau)$ consisting of functions $f \in L^2(\Gamma^\tau)$ satisfying
\[ f(z^{-1}\hgamma)=z^nf(\hgamma), \qquad z \in U(1), \hgamma \in \Gamma^\tau.\]
Recall the left and right regular representations of $\Gamma^\tau$ on $L^2(\Gamma^\tau)$ are given, respectively, by
\[ \lambda(\hgamma)f(\hgamma_1)=f(\hgamma^{-1}\hgamma_1), \qquad \rho(\hgamma)f(\hgamma_1)=f(\hgamma_1\hgamma).\]
Both actions preserve the decomposition \eqref{eqn:L2HomogSub}.
\begin{definition}
\label{def:TwistedReg}
The \emph{left} $\tau$-\emph{twisted regular representation} of $\Gamma$ is the restriction of the left regular representation of $\Gamma^\tau$ on $L^2(\Gamma^\tau)$ to the subspace
\[ L^2_{\tau}(\Gamma):=L^2(\Gamma^\tau)_{(1)}.\]
The restriction of the right regular representation to $L^2_{\tau}(\Gamma)$ is the \emph{right} $(-\tau)$-\emph{twisted regular representation} of $\Gamma$.  (Note that under the right regular representation, the central circle of $\Gamma^\tau$ acts on $L^2_{\tau}(\Gamma)$ with weight $-1$.)
\end{definition}

\subsection{Dixmier-Douady bundles from crossed-products.}\label{sec:DDbdleCrossProd}
Let $X$ be a locally compact Hausdorff space with a continuous proper action of a locally compact group $\Gamma$.  The quotient $X/\Gamma$ equipped with the quotient topology is then also a locally compact Hausdorff space.  Let $\Gamma$ act on $L^2(\Gamma)$ by right translation, and on $\bK:=\bK(L^2(\Gamma))$ by the adjoint action.  Define the algebra of sections of a field of $C^\ast$-algebras over $X/\Gamma$, suggestively denoted $C_0(X\times_\Gamma \bK)$, consisting of $\Gamma$-equivariant continuous maps $X \rightarrow \bK$ vanishing at infinity in $X/\Gamma$.  The algebra $C_0(X\times_\Gamma \bK)$ is an example of a \emph{generalized fixed-point algebra}.

The following result is attributed to Rieffel (for example \cite[Proposition 4.3]{RieffelFiniteGroup}, \cite{RieffelProperActions}); see especially \cite[Corollary 2.11]{EchterhoffEmerson} for a statement formulated in the same terms used here.  Another reference is \cite[Proposition 4.3]{LaurentTuXuDiffStacks} where a quite general statement appears for twisted convolution algebras of locally compact proper groupoids.
\begin{proposition}
\label{prop:RieffelFixedPt}
Let $X$ be a locally compact Hausdorff space with a continuous proper action of a locally compact group $\Gamma$, and let $\bK=\bK(L^2(\Gamma))$.  Then
\[ \Gamma \ltimes C_0(X)\simeq C_0(X\times_\Gamma \bK).\]
\end{proposition}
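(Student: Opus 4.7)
My plan is to construct an explicit isomorphism $\Phi\colon \Gamma \ltimes C_0(X) \to C_0(X\times_\Gamma \bK)$ at the level of dense subalgebras and extend. For $f \in C_c(\Gamma, C_0(X))$, regard $f$ as a function on $\Gamma \times X$ and define $\Phi(f)(x)$ to be the operator on $L^2(\Gamma)$ with integral kernel
\[ k_x(\gamma_1,\gamma_2) = f(\gamma_1 \gamma_2^{-1})(\gamma_2 x) \Delta(\gamma_2)^{1/2}, \]
with a modular factor inserted to make the involution work out (for unimodular $\Gamma$ this is of course unnecessary, and in the application $\Gamma$ will be discrete). Properness of the $\Gamma$-action ensures that for each $x$ the set of $(\gamma_1,\gamma_2)$ where $k_x$ is nonzero is relatively compact in $\Gamma\times\Gamma$, so $\Phi(f)(x)$ is of Hilbert-Schmidt type and in particular compact. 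A direct change of variables shows $\Phi(f)(\gamma x) = \rho(\gamma) \Phi(f)(x) \rho(\gamma)^{-1}$, so $\Phi(f)$ is an equivariant section, and again by properness the image of $\tn{supp}(f)$ in $X/\Gamma$ is compact, so $\Phi(f)$ lies in $C_0(X\times_\Gamma \bK)$.

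Next I would verify that $\Phi$ is a $\ast$-homomorphism on the dense convolution $\ast$-subalgebra $C_c(\Gamma,C_0(X))\subset \Gamma\ltimes C_0(X)$. This is a routine but slightly tedious computation: the convolution product on $C_c(\Gamma,C_0(X))$ translates into the composition rule for integral kernels, which is precisely pointwise composition of operators in the $\bK$-valued picture, and the involution on $C_c(\Gamma,C_0(X))$ translates into pointwise adjoint. To extend $\Phi$ to a map of $C^\ast$-algebras, I would exhibit a covariant representation of $(\Gamma,C_0(X))$ on $L^2(\Gamma)\otimes L^2(X,\mu)$ for a quasi-invariant measure $\mu$ on $X$, check that its integrated form factors through $\Phi$, and conclude boundedness of $\Phi$ in the $C^\ast$-norm.

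To finish I must show that the extended $\Phi$ is an isometric isomorphism. For injectivity I would rely on the fact that the regular-type representation above is faithful on $\Gamma\ltimes C_0(X)$ (since $\Gamma$ acts amenably on the proper $\Gamma$-space $X$, so full and reduced crossed products agree, and the regular representation is faithful on the reduced algebra). For surjectivity the key tool is a \emph{cutoff function}: by properness there exists $c \in C_c(X)$ with $\sum_{\gamma\in\Gamma} c(\gamma^{-1}x) \equiv 1$ (in the non-discrete case, an analogous integral identity with Haar measure). Given a section $\sigma \in C_c(X\times_\Gamma \bK)$ and a rank-one operator $\xi \otimes \eta^\ast \in \bK(L^2(\Gamma))$, one can write such rank-one pieces explicitly as $\Phi(f)(x)$ for suitable $f$ built from $c$ and from matrix coefficients of the regular representation. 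A standard partition-of-unity argument on $X/\Gamma$ then gives density of $\Phi(C_c(\Gamma,C_0(X)))$ in $C_0(X\times_\Gamma\bK)$.

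The main obstacle is this last step of proving density, because one needs enough elements of $C_c(\Gamma,C_0(X))$ to hit arbitrary compact operators in every fibre of the bundle $X\times_\Gamma\bK \to X/\Gamma$; this is exactly where the combination of properness (to obtain cutoff functions) and the specific choice $\bK=\bK(L^2(\Gamma))$ (so that fibrewise-compact operators are built from the regular representation itself) is essential. Injectivity is comparatively soft once one has a faithful covariant representation of the proper crossed product at hand.
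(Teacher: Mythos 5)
The paper does not actually prove this proposition: it is quoted from the literature (Rieffel, Echterhoff--Emerson, Laurent--Tu--Xu), and only the formula for the map is recorded in Remark \ref{rem:RieffelMap}. Your argument is essentially the standard proof from those references and is sound in strategy: an explicit kernel formula on a dense subalgebra, compatibility with convolution and involution, injectivity from faithfulness of the regular representation (proper actions are amenable, so full and reduced crossed products agree), and surjectivity from cutoff functions plus a partition of unity on $X/\Gamma$ --- which, as you say, is the only step with real content. Two bookkeeping points. First, your kernel $k_x(\gamma_1,\gamma_2)=f(\gamma_1\gamma_2^{-1})(\gamma_2 x)\Delta(\gamma_2)^{1/2}$ differs from the paper's $k_a(\gamma_1,\gamma_2;x)=\mu(\gamma_2)^{-1}a(\gamma_1\gamma_2^{-1};\gamma_1 x)$ in evaluating at $\gamma_2 x$ rather than $\gamma_1 x$; this corresponds to the integrated form $\int U_\gamma\pi(f(\gamma))\,d\gamma$ rather than $\int \pi(f(\gamma))U_\gamma\,d\gamma$. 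Both present the same crossed product and both families are equivariant for the right regular representation, but the kernel-composition check must be made against the matching convolution convention: with your kernel it fails for the usual $(f_1\ast f_2)(\gamma)=\int f_1(\delta)\,\alpha_\delta(f_2(\delta^{-1}\gamma))\,d\delta$ and succeeds for the opposite ordering, so fix one and stick to it. Second, the Hilbert--Schmidt estimate requires $f$ in the smaller dense subalgebra $C_c(\Gamma\times X)$; for general $f\in C_c(\Gamma,C_0(X))$ the support of $k_x$ is only a band $\{\gamma_1\gamma_2^{-1}\in K\}$, not relatively compact, and compactness of $\Phi(f)(x)$ is then obtained by approximation. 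Neither point is a genuine gap.
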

\begin{remark}
\label{rem:RieffelMap}
We mention briefly how a map $\Gamma \ltimes C_0(X) \rightarrow C_0(X\times_\Gamma \bK)$ is constructed.  Using conventions as in \cite[Section 3.7]{KasparovNovikov}, a function $a \in C_c(\Gamma,C_c(X))$ is sent to the $\Gamma$-equivariant family $K_a(x) \in \bK$, $x \in X$ of compact operators defined by the family of integral kernels
\begin{equation} 
\label{eqn:RieffelIso}
k_a(\gamma_1,\gamma_2;x)=\mu(\gamma_2)^{-1}a(\gamma_1\gamma_2^{-1};\gamma_1x)
\end{equation}
where $\mu \colon \Gamma \rightarrow \bR_{>0}$ is the modular homomorphism of $\Gamma$.
\end{remark}
\begin{remark}
\label{rem:LeftReg}
Proposition \ref{prop:RieffelFixedPt} can be viewed as a generalization of the Stone-von Neumann theorem (obtained from the special case $\Gamma=\bR$ acting on $X=\bR$ by translations).  More generally for $X=\Gamma$, Proposition \ref{prop:RieffelFixedPt} specializes to a well-known isomorphism
\begin{equation} 
\label{eqn:SpecialCaseRieffel}
\Gamma \ltimes C_0(\Gamma) \xrightarrow{\sim} \bK(L^2(\Gamma)).
\end{equation}
Using equation \eqref{eqn:RieffelIso} one verifies that the induced map on multiplier algebras sends $C_0(\Gamma)$ to multiplication operators and $\Gamma$ to the \emph{left} regular representation.
\end{remark}

If the action of $\Gamma$ on $X$ is free, then $X \rightarrow X/\Gamma$ is a principal $\Gamma$-bundle, and the generalized fixed-point algebra is the algebra of continuous sections vanishing at infinity of the associated bundle
\begin{equation}
\label{eqn:BoringDD} 
\A=X \times_\Gamma \bK.
\end{equation}
This is a Dixmier-Douady bundle, with typical fibre $\bK(L^2(\Gamma))$.  In fact $\A$ is Morita trivial with Morita trivialization $X \times_\Gamma L^2(\Gamma)$.  

To obtain something more interesting from the construction \eqref{eqn:BoringDD}, we adjust it slightly in two ways.  First we consider the equivariant situation, where a second group $S$ acts on $X$ and $L^2(\Gamma)$.  It may happen that the Morita trivialization $X \times_\Gamma L^2(\Gamma)$ is not $S$-equivariant.  Second, we replace $\Gamma \ltimes C_0(X)$ with a twisted crossed product algebra, as in Definition \ref{sec:TwistedCross}.  This will be important later on, when central extensions of the loop group come into the picture.

Consider a semi-direct product $S\ltimes \Gamma$, where $S$, $\Gamma$ are locally compact groups, and assume the $S$ action preserves Haar measure on $\Gamma$.  Let $\Gamma^\tau$ be a $U(1)$-central extension, and assume the action of $S$ on $\Gamma$ lifts to an action on $\Gamma^\tau$, so that we have a $U(1)$-central extension
\[ 1 \rightarrow U(1) \rightarrow S \ltimes \Gamma^\tau \rightarrow S \ltimes \Gamma \rightarrow 1.\]
\ignore{
Let
\[ \kappa \colon \Gamma \rightarrow \Hom(S,U(1)), \qquad \gamma \mapsto \kappa_{\gamma} \in \Hom(S,U(1)) \]
be the group homomorphism determined by the commutator map:
\[ \kappa_{\gamma}(s)=\wh{\gamma} s \wh{\gamma}^{-1} s^{-1} \in U(1),\]
where $\wh{\gamma} \in \Gamma^\tau$ is any lift of $\gamma \in \Gamma$. 
}
The right $(-\tau)$-twisted regular representation $(L^2_{\tau}(\Gamma),\rho)$ (Definition \ref{def:TwistedReg}) extends to a representation of $S \ltimes \Gamma^\tau$ (such that the central circle acts with weight $-1$) according to
\begin{equation} 
\label{eqn:SGammaAction}
\rho(s,\wh{\gamma})f(\hgamma_1)=f(s^{-1}\hgamma_1s\hgamma).
\end{equation}
\ignore{The inverse kappa on the left side I think is necessary, because we are assuming that the central circle acts with weight $-1$ on $L^2_{-\tau}(\Gamma)$, meaning that the commutation relation between $S$ and $\Gamma$ should involve $\kappa_{\gamma}(s)^{-1}$ instead.}The adjoint action $\Ad(\rho)$ on $\bK=\bK(L^2_{\tau}(\Gamma))$ descends to an action of $S \ltimes \Gamma$.  Let $X$ be a locally compact $S\ltimes \Gamma$-space, such that the action of $\Gamma$ on $X$ is proper.  The generalized fixed point algebra $C_0(X \times_\Gamma \bK)$ is an $S$-$C^\ast$ algebra.

\begin{remark}
\label{rem:ExtendSLeft}
For later reference note that the left $\tau$-twisted regular representation $(L^2_\tau(\Gamma),\lambda)$ (Definition \ref{def:TwistedReg}) also extends to a representation of $S\ltimes \Gamma^\tau$ (such that the central circle acts with weight $1$) according to
\[ \lambda(s,\wh{\gamma})f(\hgamma_1)=f(\hgamma^{-1}s^{-1}\hgamma_1s).\]
\end{remark}

If $A$ is a $(S \ltimes \Gamma)$-$C^\ast$ algebra, the twisted crossed product $\Gamma \ltimes_\tau A$ is an $S$-$C^\ast$ algebra, with the $S$ action being the continuous extension of the $S$ action on $C_c(\Gamma^\tau,A)$ given by
\begin{equation} 
\label{eqn:HAction}
(s\cdot a)(\hgamma)= s.a(s^{-1}\hgamma s).
\end{equation}
This applies in particular to the $(S \ltimes \Gamma)$-$C^\ast$ algebra $C_0(X)$, and one has the following variation of Proposition \ref{prop:RieffelFixedPt}.
\begin{proposition}
\label{prop:modRieffelFixedPt}
Consider a semi-direct product $S\ltimes \Gamma$, where $S$, $\Gamma$ are locally compact groups, and the $S$-action preserves Haar measure on $\Gamma$.  Let $\Gamma^\tau$ be a $U(1)$-central extension, and assume the action of $S$ on $\Gamma$ lifts to an action on $\Gamma^\tau$.  Let $(L^2_{\tau}(\Gamma),\rho)$ be the right $(-\tau)$-twisted regular representation (Definition \ref{def:TwistedReg}), extended to a representation of $S\ltimes \Gamma^\tau$ as in equation \eqref{eqn:SGammaAction}, and let $S\ltimes \Gamma^\tau$ act on $\bK=\bK(L^2_{\tau}(\Gamma))$ by the adjoint action $\Ad(\rho)$.  Let $X$ be a locally compact $S \ltimes \Gamma$ space, such that the $\Gamma$ action is proper.  There is an isomorphism of $S$-$C^\ast$ algebras
\[ \Gamma \ltimes_\tau C_0(X) \simeq C_0(X\times_{\Gamma} \bK).\]
\end{proposition}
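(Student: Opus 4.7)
The plan is to reduce to Proposition \ref{prop:RieffelFixedPt} applied to the action of $\Gamma^\tau$ on $X$, and then extract the weight-$1$ component of the homogeneous decomposition \eqref{eqn:Homogeneous} on both sides. Since the central $U(1)\subset \Gamma^\tau$ acts trivially on $X$, $\Gamma$ acts properly, and $U(1)$ is compact, the group $\Gamma^\tau$ also acts properly on $X$; applying Proposition \ref{prop:RieffelFixedPt} (with Haar measure on $\Gamma^\tau$ normalized as in \eqref{eqn:IntGamma}) yields an isomorphism $\Gamma^\tau \ltimes C_0(X) \simeq C_0(X\times_{\Gamma^\tau} \bK(L^2(\Gamma^\tau)))$, where $\Gamma^\tau$ acts on $\bK(L^2(\Gamma^\tau))$ by $\Ad \circ \rho$.

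To match weight-$1$ parts, I first use Remark \ref{rem:LeftReg} (applied to $\Gamma^\tau$), which identifies $U(1)\subset \Gamma^\tau \subset M(\Gamma^\tau \ltimes C_0(X))$ on the right-hand side with $\lambda(z)\in \bB(L^2(\Gamma^\tau))$ regarded as a central multiplier (central because $\lambda$ and $\rho$ commute universally). The weight-$1$ part of the left-hand side is $\Gamma \ltimes_\tau C_0(X)$ by definition. On the right, a $\Gamma^\tau$-equivariant section $F$ automatically satisfies $\rho(z) F(x) \rho(z)^{-1}=F(x)$, so $F(x)$ preserves each weight subspace $L^2(\Gamma^\tau)_{(m)}$; requiring in addition that $\lambda(z) F(x)=zF(x)$ forces the image of $F(x)$ to lie in $L^2(\Gamma^\tau)_{(1)}=L^2_\tau(\Gamma)$. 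Combining these constraints, $F(x)$ annihilates $L^2(\Gamma^\tau)_{(m)}$ for $m\neq 1$ and restricts to an element of $\bK(L^2_\tau(\Gamma))$; the residual $\Gamma$-action obtained by restricting $\Ad\circ\rho$ is precisely $\Ad$ of the right $(-\tau)$-twisted regular representation (Definition \ref{def:TwistedReg}), as required.

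For $S$-equivariance, a direct substitution in the kernel formula \eqref{eqn:RieffelIso} verifies that Rieffel's map intertwines the action \eqref{eqn:HAction} on the left with the action $(s\cdot F)(x)=\Ad(\rho(s))F(s^{-1}x)$ on the right, using that $S$ preserves Haar measure on $\Gamma$ (hence on $\Gamma^\tau$ by \eqref{eqn:IntGamma}) and that $\rho$ extends canonically to $S\ltimes \Gamma^\tau$ as in \eqref{eqn:SGammaAction}. The main obstacle is really bookkeeping: one must carefully track how the left and right regular representations interact with the $U(1)$-weight grading. Once one notices that central multiplication on the crossed product transports to left-regular multiplication by $\lambda(z)$ on the operator side—while the adjoint of the right regular $\rho$ supplies the equivariance structure—the identification of the weight-$1$ subspace with $C_0(X\times_\Gamma \bK(L^2_\tau(\Gamma)))$ is essentially forced.
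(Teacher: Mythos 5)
Your proposal is correct and follows essentially the same route as the paper: apply Proposition \ref{prop:RieffelFixedPt} to the proper $\Gamma^\tau$-action and then match the weight-one pieces of the $U(1)$-decompositions on both sides. The only (cosmetic) difference is that the paper isolates the weight-$n$ component by computing the homogeneity of the integral kernel $k_a$ in its second variable, whereas you cut it out via the central multiplier acting as $\lambda(z)$; combined with $\Ad(\rho(z))$-invariance these give the same identification with $\bK(L^2(\Gamma^\tau)_{(1)})=\bK(L^2_\tau(\Gamma))$.
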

\begin{proof}
This follows in a straight-forward manner from Proposition \ref{prop:RieffelFixedPt} applied to $\Gamma^\tau$.  The action of $\Gamma$ on $X$ induces a proper action of $\Gamma^\tau$ on $X$ with the central circle acting trivially.  Applying Proposition \ref{prop:RieffelFixedPt} to $\Gamma^\tau$,
\begin{equation} 
\label{eqn:ApplyRieffelFixedPt}
\Gamma^\tau \ltimes C_0(X) \simeq C_0\big(X\times_{\Gamma^\tau} \bK(L^2(\Gamma^\tau))\big)=C_0\big(X\times_\Gamma \bK(L^2(\Gamma^\tau))^{U(1)}\big),
\end{equation}
where for the second equality we use the fact that the central circle acts trivially on $X$.  The algebra on the left hand side of \eqref{eqn:ApplyRieffelFixedPt} splits into a direct sum of its homogeneous ideals
\[ \Gamma^\tau \ltimes C_0(X)=\bigoplus_{n \in \bZ} (\Gamma^\tau \ltimes C_0(X))_{(n)}.\]
Decompose $L^2(\Gamma^\tau)$ into isotypic components for the action of the central circle, as in \eqref{eqn:L2HomogSub}:
\begin{equation} 
\label{eqn:U(1)isotypic}
L^2(\Gamma^\tau)=\bigoplus_{n \in \bZ} L^2(\Gamma^\tau)_{(n)}.
\end{equation}
The subalgebra $\bK(L^2(\Gamma^\tau))^{U(1)} \subset \bK(L^2(\Gamma^\tau))$ is the set of compact operators preserving the decomposition \eqref{eqn:U(1)isotypic}; hence
\begin{equation}
\label{eqn:U(1)compact} 
\bK(L^2(\Gamma^\tau))^{U(1)}=\bigoplus_{n \in \bZ} \bK(L^2(\Gamma^\tau)_{(n)}).
\end{equation}
We claim the isomorphism \eqref{eqn:ApplyRieffelFixedPt} restricts to an isomorphism
\[ (\Gamma^\tau \ltimes C_0(X))_{(n)} \rightarrow C_0\big(X\times_\Gamma \bK(L^2(\Gamma^\tau)_{(n)})\big).\]
To see this let $a \in C_c(\Gamma^\tau \ltimes C_c(X))_{(n)}$, and let $K_a$ be the corresponding family of operators defined by the integral kernels $k_a$ in \eqref{eqn:RieffelIso}.  We suppress the basepoint $x \in X$ from the notation as it plays no role in the argument.  The homogeneity of $a$ (and $U(1)$ invariance of $\mu$) implies (see \eqref{eqn:RieffelIso}) $k_a(\wh{\gamma}_1,z^{-1}\wh{\gamma}_2)=z^{-n}k_a(\wh{\gamma}_1,\wh{\gamma}_2)$, $z \in U(1)$.  For $f \in L^2(\Gamma^\tau)$,
\[ (K_af)(\wh{\gamma}_1)=\int_{\Gamma^\tau} k_a(\wh{\gamma}_1,\wh{\gamma}_2)f(\wh{\gamma}_2)\,d\wh{\gamma}_2.\]
According to \eqref{eqn:AvgU1}, \eqref{eqn:IntGamma} the integral over $\Gamma^\tau$ can be carried out by first averaging with respect to the $U(1)$ action, and then integrating over $\Gamma$.  Note that   
\[ \int_{U(1)} k_a(\wh{\gamma}_1,z^{-1}\wh{\gamma}_2)f(z^{-1}\wh{\gamma}_2)\,dz=k_a(\wh{\gamma}_1,\wh{\gamma}_2)\int_{U(1)} z^{-n} f(z^{-1}\wh{\gamma}_2)\,dz.\]
The integral over $z \in U(1)$ gives the projection to the $(n)$-isotypical component, hence $K_a$ is contained in the ideal $\bK(L^2(\Gamma^\tau)_{(n)})$.  In particular for $n=1$
\[ \Gamma \ltimes_\tau C_0(X) \simeq C_0\big(X\times_\Gamma \bK(L^2(\Gamma^\tau)_{(1)})\big)=C_0\big(X\times_\Gamma \bK(L^2_\tau(\Gamma))\big).\]
\end{proof}
Assuming $\Gamma$ acts on $X$ freely, we can form the associated $S$-equivariant Dixmier-Douady bundle over $X/\Gamma$
\[ \A=X\times_\Gamma \bK,\]
and $\Gamma \ltimes_\tau C_0(X) \simeq C_0(\A)$ as $S$-$C^\ast$ algebras.

\subsection{An example: a Dixmier-Douady bundle $\A_T$ over $T$.}\label{sec:MoritaMorphism}
Let $LG^\tau$ denote a $U(1)$ central extension of the loop group, corresponding to $0 < \ell \in \bZ$ times the generator $LG^\bas$.  Let $T\ltimes \Pi^\tau$ denote the corresponding $U(1)$ central extension of the subgroup $T \times \Pi$ (see Section \ref{sec:loopgroup}).

Carrying out the construction of the previous section with $S=T$, $\Gamma^\tau=\Pi^\tau$, $X=\t$ we obtain a Dixmier-Douady bundle over $T=\t/\Pi$:
\begin{definition}
Let $\A_T$ be the $T$-equivariant associated bundle
\begin{equation}
\label{eqn:defAT} 
\A_T=\t \times_{\Pi} \bK\big(L^2_{\tau}(\Pi)\big) \rightarrow \t/\Pi=T.
\end{equation}
$\A_T$ is a $T$-equivariant Dixmier-Douady bundle over $T$.
\end{definition}

Recall the $G$-equivariant Dixmier-Douady bundle $\A$ described in Section \ref{sec:DDoverG}:
\[ \A=PG \times_{LG} \bK(V^\ast) \rightarrow G \]
where $V$ is a level $\ell$ positive energy representation.  The map
\[ \t \hookrightarrow PG, \qquad \xi \mapsto \gamma_\xi \]
where
\[ \gamma_\xi(s)=\exp(s\xi), \qquad s \in \bR/\bZ \]
embeds $\t$ into $PG$, $\Pi$-equivariantly.  Restricting to $\t \subset PG$ in \eqref{eqn:defA2}, we obtain a Dixmier-Douady bundle
\begin{equation} 
\label{eqn:defA2}
\A|_T=\t \times_\Pi \bK(V^\ast),
\end{equation}
over the maximal torus.  The central circle in $T\ltimes \Pi^\tau$ acts on both $L^2_{\tau}(\Pi)$, $V^\ast$ with weight $-1$ (recall that for $L^2_\tau(\Pi)$ we use the \emph{right} regular representation $\rho$ in Definition \ref{def:TwistedReg}), hence the diagonal action of $T \ltimes \Pi^\tau$ on the tensor product
\[ L^2_{\tau}(\Pi)\otimes V \]
descends to an action of $T\times \Pi$.  Define
\begin{equation} 
\label{eqn:MoritaMorphismT}
\E=\t \times_\Pi \big(L^2_{\tau}(\Pi)\otimes V \big),
\end{equation}
a bundle of Hilbert spaces over $T$.  By \eqref{eqn:defAT} and \eqref{eqn:defA2}, $\E$ defines a $T$-equivariant Morita morphism $\A|_T \dashrightarrow \A_T$.

\subsection{A Green-Julg isomorphism.}\label{sec:GreenJulg}
For a compact group $K$, the Green-Julg theorem states that the $K$-equivariant K-theory of a $K$-$C^\ast$ algebra $A$ is isomorphic to the K-theory of the crossed-product algebra $K\ltimes A$.  There is a `dual' version of the Green-Julg theorem (cf. \cite[Theorem 20.2.7(b)]{Blackadar}) which applies to discrete groups instead of compact groups and K-homology instead of K-theory.
\begin{proposition}
\label{prop:GreenJulg}
Let $\Gamma$ be a discrete group, and let $A$ be a $\Gamma$-$C^\ast$ algebra.  Then
\[ \KK_\Gamma(A,\bC) \simeq \KK(\Gamma \ltimes A,\bC).\]
More generally, suppose a locally compact group $S$ acts on $\Gamma$ preserving Haar measure.  If $A$ is an $S\ltimes \Gamma$-$C^\ast$ algebra then
\[ \KK_{S\ltimes \Gamma}(A,\bC) \simeq \KK_S(\Gamma\ltimes A,\bC),\]
where $\Gamma \ltimes A$ is equipped with the $S$-action in equation \eqref{eqn:HAction}.
\end{proposition}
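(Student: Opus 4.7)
The plan is to construct explicit mutually inverse maps at the level of Kasparov cycles, generalizing the classical (non-equivariant) dual Green-Julg theorem \cite[Theorem 20.2.7(b)]{Blackadar} by carrying the $S$-action along throughout. Since the first statement is the $S = \{e\}$ case of the second, I would prove the equivariant version directly.

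For the forward map $\Psi \colon \KK_{S\ltimes\Gamma}(A,\bC) \to \KK_S(\Gamma \ltimes A, \bC)$, I would represent a class by an $(S \ltimes \Gamma)$-equivariant Kasparov module $(H,\pi,F)$. Since $\Gamma$ is discrete, one can replace $F$ by an operator-homotopic representative that is strictly $\Gamma$-invariant (literal averaging fails when $\Gamma$ is infinite, but the discrepancy can be absorbed by a compact perturbation via Kasparov's technical theorem), while preserving $S$-equivariance modulo compacts. The covariance between $\pi$ and the unitary $\Gamma$-representation $U$ then, by the universal property in Section \ref{sec:TwistedCross}, integrates to a non-degenerate $^\ast$-homomorphism $\tilde\pi \colon \Gamma \ltimes A \to \mathcal{B}(H)$, and $\tilde\pi$ intertwines the $S$-actions in the sense of \eqref{eqn:HAction}. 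The triple $(H,\tilde\pi,F)$ is then a valid $S$-equivariant Kasparov $(\Gamma \ltimes A, \bC)$-module because, for $a \in A$ and $g \in \Gamma$, the commutator $[\tilde\pi(a\cdot\delta_g),F] = [\pi(a),F]\,U(g)$ is compact.

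For the inverse $\Phi$, start with an $S$-equivariant Kasparov $(\Gamma \ltimes A, \bC)$-module $(H,\tilde\pi,F)$. Extend $\tilde\pi$ to a strictly continuous homomorphism on the multiplier algebra $M(\Gamma \ltimes A)$ using non-degeneracy, and restrict along the canonical embeddings $A \hookrightarrow M(\Gamma \ltimes A)$ and $\Gamma \hookrightarrow M(\Gamma \ltimes A)$; the latter exists because $\Gamma$ is discrete, so each $\delta_g$ is a unitary multiplier. This produces a covariant pair $(\pi,U)$ satisfying \eqref{eqn:covpair}, and hence an $(S\ltimes\Gamma)$-equivariant $A$-module structure on $H$; the triple $(H,\pi,F)$ is then a Kasparov $(A,\bC)$-cycle for the equivariant theory.

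These two assignments are manifestly inverse to each other at the level of cycles, and they descend to the KK-groups since they respect unitary equivalence, direct sums, degenerate modules, and operator homotopy. The main obstacle, and the only step requiring serious attention, is the compatibility of $F$ with the $\Gamma$-action: the definition of $\KK_{S\ltimes\Gamma}$ requires only compact commutators $[U_g,F]$, whereas the forward construction wants genuine $\Gamma$-invariance of $F$ to guarantee that $\tilde\pi$ has compact commutators with $F$. This is precisely where the discreteness of $\Gamma$ enters, via the perturbation argument above. The presence of $S$ introduces no additional difficulty because the $S$-action on $\Gamma \ltimes A$ defined in \eqref{eqn:HAction} was tailored precisely to make the covariance relation \eqref{eqn:covpair} $S$-equivariant.
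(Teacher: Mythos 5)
Your overall architecture --- passing between a covariant pair $(\pi_A,\pi_\Gamma)$ and the integrated representation of $\Gamma\ltimes A$ via the universal property, using discreteness of $\Gamma$ to embed $A$ and the $\delta_\gamma$ as multipliers, and handling $S$ through \eqref{eqn:HAction} --- matches the paper's. But the step you single out as ``the only step requiring serious attention'' is where your argument breaks. You propose to replace $F$ by an operator-homotopic, \emph{strictly} $\Gamma$-invariant representative, citing Kasparov's technical theorem. For an infinite discrete group there is no averaging, and the technical theorem does not produce an exactly invariant $F$ (let alone one that simultaneously stays $S$-equivariant modulo compacts); you give no argument that such a representative exists. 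Moreover your subsequent identity $[\tilde\pi(a\delta_g),F]=[\pi(a),F]\,U(g)$ is false unless $F$ commutes with every $U(g)$ on the nose, so as written the forward map is not actually constructed.

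The point is that no perturbation of $F$ is needed in either direction. Expanding the commutator gives
\[
[F,\pi(a)U(g)]=\Big([F,\pi(a)]-\pi(a)\big(\Ad_{U(g)}F-F\big)\Big)U(g),
\]
and \emph{both} terms on the right are compact by the definition of an $(S\ltimes\Gamma)$-equivariant Kasparov $(A,\bC)$-cycle: Kasparov's equivariance axiom demands compactness of $\pi(a)(\Ad_{U(g)}F-F)$, not of $[U(g),F]$ as you state (the latter is stronger and also not available). This is exactly the identity \eqref{eqn:IdentityForCommutator} in the paper; read in the opposite direction it likewise shows that the restriction to $A$ of a $(\Gamma\ltimes A,\bC)$-cycle satisfies the equivariance axiom, since $\pi(a)$ and $\pi(a)U(g)$ both lie in the image of $\Gamma\ltimes A$. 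With this substitution the remainder of your argument (the multiplier-algebra embeddings, the compatibility of \eqref{eqn:HAction} with \eqref{eqn:covpair}, and descent to homotopy classes by applying the same maps to cycles over $C([0,1])$) goes through.
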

The isomorphism is simple to describe at the level of cycles.  Let $(H,\pi,F)$ be a cycle representing a class in $\KK(\Gamma \ltimes A,\bC)$.  We may assume $\pi$ is non-degenerate.  The universal property of the crossed product $\Gamma \ltimes A$ guarantees $\pi$ comes from a covariant pair $(\pi_A,\pi_\Gamma)$.  For the triple $(H,\pi_A,F)$ to represent a class in $\KK_\Gamma(A,\bC)$, one needs the operators
\begin{equation} 
\label{eqn:needcompact}
\pi_A(a)(1-F^2), \quad [F,\pi_A(a)], \quad \pi_A(a)(\Ad_{\pi_\Gamma(\gamma)}F-F)
\end{equation}
to be compact, for all $a \in A$, $\gamma \in \Gamma$.  The assumption that $\Gamma$ is discrete means that $A$ is a \emph{sub-algebra} of $\Gamma \ltimes A$, so $\pi_A$ is simply the restriction of $\pi$ to $A$, and the first two operators in \eqref{eqn:needcompact} are compact.  For the last operator, note that 
\begin{equation} 
\label{eqn:IdentityForCommutator}
\pi_A(a)(\Ad_{\pi_\Gamma(\gamma)}F-F)=[F,\pi(a)]-[F,\pi(a)\pi_\Gamma(\gamma)]\pi_\Gamma(\gamma)^{-1}.
\end{equation}
The operator $\pi(a)\pi_\Gamma(\gamma) \in \pi(\Gamma \ltimes A)$, hence the compactness of both terms follows because $(H,\pi,F)$ is a cycle.

The inverse map is similar: a triple $(H,\pi_A,F)$ representing a class in $\KK_\Gamma(A,\bC)$ is sent to the triple $(H,\pi,F)$, where $\pi\colon \Gamma \ltimes A \rightarrow \bB(H)$ is the representation induced by the covariant pair $(\pi_A,\pi_\Gamma)$.  The crossed product $\pi(\Gamma \ltimes A)$ contains a dense sub-algebra consisting of finite linear combinations of operators of the form $\pi(a)\pi_\Gamma(\gamma)=\pi_A(a)\pi_\Gamma(\gamma)$.  The operator $\pi_A(a)\pi_\Gamma(\gamma)(F^2-1)=\pi_\Gamma(\gamma)\pi_A(\gamma^{-1} \cdot a)(F^2-1)$ is compact, while the commutator $[F,\pi_A(a)\pi_\Gamma(\gamma)]$ is compact using \eqref{eqn:IdentityForCommutator} (multiply both sides by $\pi_\Gamma(\gamma)$ on the right).  

The maps are well-defined on homotopy classes because one can apply the same maps to cycles for the pair $(A,C([0,1]))$ (resp. $(\Gamma \ltimes A,C([0,1]))$).

\begin{definition}
Let $N$ be a locally compact group with $U(1)$ central extension $N^\tau$.  Let $A$, $B$ be $N$-$C^\ast$ algebras (trivial $U(1)$ action).  For $n \in \bZ$ define
\[ \KK_{N^\tau}(A,B)_{(n)} \]
to be the direct summand of $\KK_{N^\tau}(A,B)$ generated by cycles where the central circle of $N^\tau$ acts with weight $n$.
\end{definition}
\ignore{
\begin{remark}
One has a restriction homomorphism
\[ \KK_{\widehat{N}}(A,B) \rightarrow \KK_{U(1)}(A,B).\]
Since $U(1)$ acts trivially on $A$, $B$, the latter group is isomorphic to
\[ \KK(A,B)\otimes R(U(1))=\bigoplus_{n \in \bZ} \KK(A,B).\]
The subgroup $\KK_{\widehat{N}}(A,B)^{(1)}$ is the inverse image of the $n=1$ summand.
\end{remark}
}
\begin{proposition}
\label{prop:modGreenJulg}
Consider a semi-direct product $N=S\ltimes \Gamma$, where $S$, $\Gamma$ are locally compact groups and $\Gamma$ is discrete.  Let $\Gamma^\tau$ be a $U(1)$-central extension, and assume the action of $S$ on $\Gamma$ lifts to an action on $\Gamma^\tau$.  Let $A$ be an $S \ltimes \Gamma$-$C^\ast$ algebra.  The twisted crossed product $\Gamma \ltimes_\tau A$ is an $S$-$C^\ast$ algebra with action given by \eqref{eqn:HAction}, and
\[ \KK_{S\ltimes \Gamma^\tau}(A,\bC)_{(1)} \simeq \KK_S(\Gamma \ltimes_\tau A,\bC).\]
\end{proposition}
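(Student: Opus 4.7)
The plan is to imitate the proof of Proposition \ref{prop:GreenJulg} at the level of cycles, with the universal property of the twisted crossed product $\Gamma\ltimes_\tau A$ recalled at \eqref{eqn:covpair} standing in for the universal property of the ordinary crossed product. The key observation is that a non-degenerate representation of $\Gamma\ltimes_\tau A$ is exactly the same data as a covariant pair $(\pi_A,\pi_\Gamma^\tau)$ in which the central $U(1)\subset\Gamma^\tau$ acts with weight one --- and this is precisely the data carried by a cycle for $\KK_{S\ltimes\Gamma^\tau}(A,\bC)_{(1)}$.

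Forward direction: given a cycle $(H,\pi_A,F)$ for $\KK_{S\ltimes\Gamma^\tau}(A,\bC)_{(1)}$, restricting the $S\ltimes\Gamma^\tau$-action on $H$ to $\Gamma^\tau$ yields a weight-one representation $\pi_\Gamma^\tau$; the pair $(\pi_A,\pi_\Gamma^\tau)$ is covariant, so the universal property produces a non-degenerate representation $\pi$ of $\Gamma\ltimes_\tau A$. To see that $(H,\pi,F)$ is a $\KK_S(\Gamma\ltimes_\tau A,\bC)$ cycle, I would choose a set-theoretic section $\sigma\colon\Gamma\to\Gamma^\tau$ (which exists because $\Gamma$ is discrete) and note that operators $\pi_A(a)u_\gamma$, with $u_\gamma:=\pi_\Gamma^\tau(\sigma(\gamma))$, span a dense subalgebra of $\pi(\Gamma\ltimes_\tau A)$. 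Compactness of $\pi(au_\gamma)(F^2-1)$ and of $[F,\pi(au_\gamma)]$ then reduces, via the identity \eqref{eqn:IdentityForCommutator}, to the compactness conditions satisfied by $(H,\pi_A,F)$ as an equivariant cycle --- an argument identical in structure to the one in Proposition \ref{prop:GreenJulg}. The $S$-equivariance of $\pi$ is automatic from the definition \eqref{eqn:HAction} of the $S$-action on $\Gamma\ltimes_\tau A$.

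The reverse direction proceeds symmetrically: a cycle $(H,\pi,F)$ for $\KK_S(\Gamma\ltimes_\tau A,\bC)$ with $\pi$ non-degenerate determines, via the universal property, a weight-one covariant pair, hence an $S\ltimes\Gamma^\tau$ representation on $H$. The compactness conditions required of $(H,\pi_A,F)$ as a $\KK_{S\ltimes\Gamma^\tau}$ cycle --- compactness of $\pi_A(a)(F^2-1)$, $[F,\pi_A(a)]$, and $\pi_A(a)(\Ad_{\pi_\Gamma^\tau(\hgamma)}F-F)$ for $\hgamma\in\Gamma^\tau$ --- follow from those of $(H,\pi,F)$ by running \eqref{eqn:IdentityForCommutator} in reverse. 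Both maps apply verbatim to cycles for the pairs $(A,C([0,1]))$ and $(\Gamma\ltimes_\tau A,C([0,1]))$, so they descend to well-defined, mutually inverse homomorphisms on homotopy classes.

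The main subtlety worth flagging is the one raised in Remark \ref{rem:ChooseSection}: in general there is no $S$-invariant section $\sigma\colon\Gamma\to\Gamma^\tau$. This causes no trouble here because $\sigma$ is used only to exhibit a dense subalgebra of $\pi(\Gamma\ltimes_\tau A)$ on which to check the compactness conditions, and plays no role in the definition of the map itself; the bijection between representations of $\Gamma\ltimes_\tau A$ and weight-one covariant pairs is effected entirely through the section-free universal property, so the $S$-action is transported correctly on both sides with no further choice.
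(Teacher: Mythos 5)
Your overall strategy is the same as the paper's: use the universal property of $\Gamma\ltimes_\tau A$ to pass between non-degenerate representations and weight-one covariant pairs, check the compactness conditions via the identity \eqref{eqn:IdentityForCommutator}, and handle homotopies by running the same construction over $C([0,1])$. Your observation that the set-theoretic section is only used to exhibit a dense subalgebra, and plays no role in defining the map, is also correct and matches the paper.

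There is, however, one genuine gap, and it is precisely the step that distinguishes this proposition from Proposition \ref{prop:GreenJulg}. Starting from a cycle $(H,\pi,F,\pi_S)$ for $\KK_S(\Gamma\ltimes_\tau A,\bC)$, the universal property produces a covariant pair $(\pi_A,\pi_{\Gamma^\tau})$, but you then assert that this together with $\pi_S$ yields "an $S\ltimes\Gamma^\tau$ representation on $H$". That requires the commutation relation $\pi_S(s)\pi_{\Gamma^\tau}(\hgamma)\pi_S(s)^{-1}=\pi_{\Gamma^\tau}(s\hgamma s^{-1})$, which does \emph{not} follow from the universal property: the universal property only ties $\pi_{\Gamma^\tau}$ to $\pi_A$, and says nothing about its interaction with $\pi_S$. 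The paper proves this relation by extending the covariant pair $(\pi,\pi_S)$ to the multiplier algebra $M(\Gamma\ltimes_\tau A)$, realizing $\pi_{\Gamma^\tau}(\hgamma)$ as $\pi(u_{\hgamma})$ for an explicit multiplier $u_{\hgamma}$ satisfying $s\cdot u_{\hgamma}=u_{s\hgamma s^{-1}}$ under the action \eqref{eqn:HAction}, and then computing (equation \eqref{eqn:CommutatorGammaS}). The same relation is also needed in the other direction, both for your claim that "$S$-equivariance of $\pi$ is automatic" (it is the same computation run backwards) and for the compactness of the full semi-direct product commutator $\pi_A(a)\bigl(\Ad_{\pi_{S\ltimes\Gamma^\tau}(s,\hgamma)}F-F\bigr)$, which requires the three-term refinement of \eqref{eqn:IdentityForCommutator} displayed in the paper rather than the two-term identity alone. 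Supplying the multiplier-algebra argument closes the gap and completes the proof.
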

\begin{proof}
Let $(H,\pi,F,\pi_S)$ represent a class in $\KK_S(\Gamma \ltimes_\tau A,\bC)$.  We may assume $\pi$ is non-degenerate.  The universal property of $\Gamma \ltimes_\tau A$ implies that there is a covariant pair $(\pi_A,\pi_{\Gamma^\tau})$.  Define $\pi_{S\ltimes \Gamma^\tau}(s,\hgamma)=\pi_S(s)\pi_{\Gamma^\tau}(\hgamma)$.  At the level of cycles, the map sends $(H,\pi,F,\pi_S)$ to $(H,\pi_A,F,\pi_{S\ltimes \Gamma^\tau})$.  

We first check that $\pi_{S\ltimes \Gamma^\tau}$ is indeed a representation of $S\ltimes \Gamma^\tau$.  The action of $S$ on $\Gamma \ltimes_\tau A$ extends to an action on the multiplier algebra $M(\Gamma \ltimes_\tau A)$.  By non-degeneracy the representation $\pi$ of $\Gamma \ltimes_\tau A$ extends to $M(\Gamma \ltimes_\tau A)$, and one obtains a covariant pair extending $(\pi,\pi_S)$.  For $\hgamma \in \Gamma^\tau$, the function
\[ u_{\hgamma}(\hgamma^\prime)=\begin{cases} z &\text{ if } \hgamma^\prime=z^{-1}\hgamma, \quad z \in U(1) \\ 0 &\text{ else}\end{cases}\]
lies in $M(\Gamma \ltimes_\tau A)$ and satisfies $\pi(u_{\hgamma})=\pi_\Gamma^\tau(\hgamma)$, $u_{s\hgamma s^{-1}}(\hgamma^\prime)=u_{\hgamma}(s^{-1}\hgamma^\prime s)$.  \ignore{As far as I can remember, the reason for saying multiplier algebra here is because this element lies in $\Gamma \ltimes A^+$ rather than $(\Gamma \ltimes A)^+$, the latter being a slightly smaller algebra (take cross product before or after taking cross product)}By \eqref{eqn:HAction},
\begin{equation} 
\label{eqn:CommutatorGammaS}
\pi_S(s)\pi_\Gamma^\tau(\wh{\gamma})\pi_S(s)^{-1}=\pi_S(s)\pi(u_{\hgamma})\pi_S(s)^{-1}=\pi(s\cdot u_{\hgamma})=\pi(u_{s\hgamma s^{-1}})=\pi_\Gamma^\tau(s\wh{\gamma}s^{-1}).
\end{equation}
Equation \eqref{eqn:CommutatorGammaS} implies that $\pi_{S\ltimes \Gamma^\tau}$ is a representation of $S\ltimes \Gamma^\tau$.

The algebra $A$ can be regarded as a \emph{sub-algebra} of $\Gamma \ltimes_\tau A$, via the embedding $a \mapsto \ti{a}$, where
\[ \ti{a}(\wh{\gamma})=\begin{cases} 
za &\text{ if } \wh{\gamma}=z^{-1}1_{\Gamma^\tau}, \quad z \in U(1) \\ 
0 & \text{ else} \end{cases}\]
and $\pi_A(a)=\pi(\ti{a})$.  The argument that $(H,\pi_A,F)$ represents a class in $\KK_{S\ltimes \Gamma^\tau}(A,\bC)$ is then similar to Proposition \ref{prop:GreenJulg}.  For example, \eqref{eqn:IdentityForCommutator} now reads
\begin{align*}
\pi_A(a)(\Ad_{\pi_{S\ltimes\Gamma^\tau}(s,\wh{\gamma})}F-F)=[F,\pi(\ti{a})]&+\pi(\ti{a})\pi_\Gamma^\tau(\wh{\gamma})(\Ad_{\pi_S(s)}F-F)\pi_\Gamma^\tau(\wh{\gamma})^{-1}\\
&-[F,\pi(\ti{a})\pi_\Gamma^\tau(\wh{\gamma})]\pi_\Gamma^\tau(\wh{\gamma})^{-1}
\end{align*}
(we have used \eqref{eqn:CommutatorGammaS}).  Note $\pi(\ti{a})\pi_\Gamma^\tau(\wh{\gamma}) \in \pi(\Gamma \ltimes_\tau A)$, hence compactness of all three terms follows because $(H,\pi,F)$ is a cycle.

In the reverse direction, let $(H,\pi_A,F,\pi_{S\ltimes \Gamma^\tau})$ represent a class in $\KK_{S\ltimes \Gamma^\tau}(A,\bC)_{(1)}$, and let $\pi_\Gamma^\tau$ (resp. $\pi_S$) be the restriction of $\pi_{S\ltimes \Gamma^\tau}$ to $\Gamma^\tau$ (resp. $S$).  The representations $(\pi_A,\pi_\Gamma^\tau)$ form a covariant pair as in \eqref{eqn:covpair}, and the map sends $(H,\pi_A,F,\pi_{S\ltimes \Gamma^\tau})$ to $(H,\pi,F,\pi_S)$ where $\pi$ is the representation of $\Gamma \ltimes_\tau A$ guaranteed by the universal property.  \ignore{To see why the action of $S$ on $\Gamma \ltimes_\tau A$ is as in \eqref{eqn:HAction}, let $s \in S$, $a \in C_c(\Gamma^\tau,A)$ and $v \in H$, then
\begin{align*} 
\pi_S(s)\pi(a)v&=\int_{\Gamma^\tau} \pi_S(s)\pi_A(a(\wh{\gamma}))\pi_S(s)^{-1}\pi_S(s) \pi_\Gamma^\tau(\hgamma)v \\
&=\int_{\Gamma^\tau} \pi_A(s.a(\wh{\gamma})) \pi_S(s)\pi_{\Gamma}^\tau(\hgamma)v\\
&=\int_{\Gamma^\tau}\pi_A(s.a(\wh{\gamma}))\kappa_\gamma(s)^{-1}\pi_{\Gamma}^\tau(\hgamma)\pi_S(s)v
\end{align*}
and this equals $\pi(s\cdot a)\pi_S(s)v$ if we set $(s\cdot a)(\wh{\gamma})=\kappa_\gamma(s)^{-1}s.a(\wh{\gamma})$.  }One checks that the result is a cycle similar to before.  

The maps are well-defined on homotopy classes because one may apply the same maps to cycles for $(A,C([0,1]))$ (resp. $(\Gamma \ltimes_\tau A,C([0,1]))$). 
\end{proof}

\subsection{The analytic assembly map.}\label{sec:Assembly}
Let $X$ be a locally compact space with a proper action of a locally compact group $N$.  If the action of $N$ is \emph{cocompact}, i.e. $X/N$ is compact, then there is a map
\[ \mu_N\colon \K_0^N(X)=\KK_N(C_0(X),\bC) \rightarrow \KK(\bC,C^\ast(N))=\K_0(C^\ast(N)),\]
known as the \emph{analytic assembly map}.  If $N$ is compact, the analytic assembly map is just the equivariant index:
\[ \mu_N([(H,\rho,F)])=[\ker(F^+)]-[\ker(F^-)] \in \K_0(C^\ast(N))\simeq R(N).\]
For non-compact $N$, the definition of the assembly map is more involved.  We give a brief description here and refer the reader to e.g.  \cite{BaumConnesHigson}, \cite[Section 2]{MislinValette}, \cite[Section 4.2]{EchterhoffBaumConnesReview2017}, \cite{KasparovTransversallyElliptic} for details.

Let $(H,\rho,F)$ be a cycle representing a class $[F] \in \KK_N(C_0(X),\bC)$.  Assume the operator $F$ is \emph{properly supported}, in the sense that for any $f \in C_c(X)$ one can find an $h \in C_c(X)$ such that $\rho(h)F\rho(f)=F\rho(f)$ (this can always be achieved by perturbing $F$, cf. \cite[Section 3]{BaumConnesHigson}).  To define $\mu_N$, the first step is to define a $C_c(N)$-valued inner product $(-,-)_N$ on the subspace $\rho(C_c(X))H \subset H$, by
\[ (f_1,f_2)_N(n)=(f_1,n\cdot f_2)_{L^2}.\]
Complete $\rho(C_c(X))H$ in the norm $\|f\|_N=\|(f,f)_N\|^{1/2}_{C^\ast(N)}$, where $\|-\|_{C^\ast(N)}$ denotes the norm of the $C^\ast$ algebra $C^\ast(N)$, to obtain a Hilbert $C^\ast(N)$-module $\H$.  Then $F$ acts on $\rho(C_c(X))H$ (here use that $F$ is properly supported) and extends to an adjointable operator $\F$ on $\H$.  The pair $(\H,\F)$ represents a class in $\K_0(C^\ast(N))$, and
\[ \mu_N([F])=[(\H,\F)] \in \K_0(C^\ast(N)).\]
Since $\F$ commutes with the $C^\ast(N)$ action, $\ker(\F^{\pm})$ are $C^\ast(N)$-modules, but unfortunately in general they need not be finitely generated and projective, so that `$[\ker(\F^+)]-[\ker(\F^-)]$' is not a K-theory class.  If the range of $\F$ is closed and $\ker(\F^{\pm})$ are finitely generated and projective, then indeed $\mu_N([F])=[\ker(\F^+)]-[\ker(\F^-)]$ (cf. \cite[Proposition 3.27]{HigsonPrimer}); more generally  it is necessary to perturb $\F$ to obtain such a description.

There is another description of the analytic assembly map due to Kasparov that we briefly recall; see for example \cite[Section 4.2]{EchterhoffBaumConnesReview2017} for a recent review, and \cite[Section 2.4]{MislinValette} for a discussion of the relation between the two descriptions of $\mu_N$ (at least for $N$ discrete).  As the action of $N$ on $X$ is cocompact, one can find a continuous compactly supported `cut-off function' $c \colon X \rightarrow [0,\infty)$ such that for all $x \in X$,
\[ \int_N c(n^{-1}.x)^2=1 .\]
Define $p_c \colon G \times X \rightarrow [0,\infty)$ by
\[ p_c(n,x)=\mu(n)^{-1/2}c(n^{-1}.x)c(x),\]
where $\mu$ is the modular homomorphism of $N$.  The function $p_c$ defines a self-adjoint projection in $G\ltimes C_0(X)$, and hence an element $[c] \in \KK(\bC,N\ltimes C_0(X))$.  Kasparov's definition of the assembly map is as a Kasparov product
\[ \mu_N([F])=[c]\otimes_{N\ltimes C_0(X)} j_N([F]),\]
where $j_N \colon \KK_N(C_0(X),\bC) \rightarrow \KK(N\ltimes C_0(X),C^\ast(N))$ is the descent homomorphism.

\section{The group $T\ltimes \Pi^\bas$.}\label{sec:SemiDirect}
In this section we collect results about the group $T\ltimes \Pi^\bas$ and the K-theory of its group $C^\ast$ algebra.  For another discussion of the K-theory of $C^\ast(T\ltimes \Pi^\bas)$ see \cite{Takata1}.  Throughout $G$ is assumed to be connected, simply connected, simple.  Let $T \subset G$ be a fixed maximal torus and we identify $\t \times \t^\ast$ with the basic inner product, and hence $\Pi$ is identified with a sublattice of $\Pi^\ast$.

\subsection{The group $\Pi^\bas$.}
Let $\Pi^\bas$ denote the restriction to $\Pi \subset LG$ of the basic central extension $LG^{\bas}$ of the loop group.  We give an explicit 2-cocycle $\sigma$ for $\Pi^\bas$.  Recall that the cocycle of a $U(1)$ central extension associated to a splitting $\eta \in \Pi \mapsto \heta \in \Pi^\bas$ is the function $\sigma \colon \Pi \times \Pi \rightarrow U(1)$ defined by the equation
\[ \heta_1 \heta_2=\sigma(\eta_1,\eta_2)\wh{\eta_1\eta_2}.\]
(The group operation in $\Pi$ is written multiplicatively.)

Let $\beta_1,...,\beta_r \in \Pi$ be a lattice basis for $\Pi$.  It is known \cite[Proposition 4.8.1]{PressleySegal}, \cite[Theorem 3.2.1]{LaredoPosEnergy} that one can choose lifts $\wh{\beta}_1,...,\wh{\beta}_r \in \Pi^\bas$ such that
\begin{equation} 
\label{eqn:CommutatorLambda}
\wh{\beta}_i \wh{\beta}_j \wh{\beta}_i^{-1}\wh{\beta}_j^{-1}=(-1)^{B(\beta_i,\beta_j)},
\end{equation}
where $B$ is the basic inner product.  For $\eta=\sum n_i \beta_i \in \Pi$ let
\begin{equation} 
\label{eqn:DefOfLift}
\heta=\wh{\beta}_1^{n_1} \cdots \wh{\beta}_r^{n_r}.
\end{equation}
Define a bilinear map
\[ \epsilon \colon \Pi \times \Pi \rightarrow \bZ \]
by
\[ \epsilon(\beta_i,\beta_j)=\begin{cases} B(\beta_i,\beta_j) &\text{ if } i>j\\ 0 &\text{ if } i \le j \end{cases}\]
and extend bilinearly.
\begin{proposition}
The cocycle associated to the splitting \eqref{eqn:DefOfLift} is 
\[ \sigma(\eta_1,\eta_2)=(-1)^{\epsilon(\eta_1,\eta_2)}, \qquad \eta_1,\eta_2 \in \Pi.\]
\end{proposition}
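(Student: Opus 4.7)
The plan is a direct computation using the normal form \eqref{eqn:DefOfLift} together with the commutation relations \eqref{eqn:CommutatorLambda}. Write $\eta_1=\sum_i m_i\beta_i$ and $\eta_2=\sum_j n_j\beta_j$, so that, by definition of the chosen splitting,
\[ \heta_1=\wh{\beta}_1^{m_1}\cdots \wh{\beta}_r^{m_r},\qquad \heta_2=\wh{\beta}_1^{n_1}\cdots \wh{\beta}_r^{n_r},\qquad \wh{\eta_1\eta_2}=\wh{\beta}_1^{m_1+n_1}\cdots \wh{\beta}_r^{m_r+n_r}.\]
The goal is to bring the product $\heta_1\heta_2$ into the normal form for $\wh{\eta_1\eta_2}$ by commuting the factors of $\heta_2$ leftwards; the accumulated central scalar is, by definition, $\sigma(\eta_1,\eta_2)$.

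The key observation is that \eqref{eqn:CommutatorLambda} may be rewritten as $\wh{\beta}_j\wh{\beta}_i=(-1)^{B(\beta_i,\beta_j)}\wh{\beta}_i\wh{\beta}_j$, and since the commutator $(-1)^{B(\beta_i,\beta_j)}$ is central it can be pulled out freely whenever two basis lifts are swapped. I would then perform the reordering one block at a time: first move $\wh{\beta}_1^{n_1}$ past $\wh{\beta}_2^{m_2},\dots,\wh{\beta}_r^{m_r}$, then $\wh{\beta}_2^{n_2}$ past $\wh{\beta}_3^{m_3},\dots,\wh{\beta}_r^{m_r}$, and so on. Each elementary swap of $\wh{\beta}_j$ past $\wh{\beta}_i$ (with $j>i$) contributes a factor $(-1)^{B(\beta_i,\beta_j)}$, and so moving $\wh{\beta}_i^{n_i}$ past $\wh{\beta}_j^{m_j}$ contributes $(-1)^{B(\beta_i,\beta_j)m_j n_i}$. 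Summing over all pairs $j>i$ produces the total central scalar
\[ \sigma(\eta_1,\eta_2)=(-1)^{\sum_{j>i}B(\beta_i,\beta_j)\,m_j n_i}.\]

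Finally I would identify this exponent with $\epsilon(\eta_1,\eta_2)$. By bilinearity,
\[ \epsilon(\eta_1,\eta_2)=\sum_{i,k}m_i n_k\,\epsilon(\beta_i,\beta_k)=\sum_{i>k}m_i n_k\,B(\beta_i,\beta_k),\]
which, after the relabelling $(i,k)\mapsto (j,i)$, matches the exponent above, proving the claimed formula.

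There is no real obstacle here beyond careful bookkeeping of the swap indices; the only substantive input is the centrality of the commutators $(-1)^{B(\beta_i,\beta_j)}\in U(1)$, which lets the scalars be collected after the reordering rather than interleaved with it.
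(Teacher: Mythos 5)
Your proof is correct and rests on the same ingredients as the paper's: the commutation relation \eqref{eqn:CommutatorLambda}, the centrality of the resulting signs, and the symmetry of $B$. The paper merely packages the same reordering computation differently (it checks $\sigma(\beta_i,\beta_j)=(-1)^{\epsilon(\beta_i,\beta_j)}$ on basis elements and then observes that $\sigma$ is bimultiplicative, so the general case follows from bilinearity of $\epsilon$), whereas you carry out the global reordering of $\heta_1\heta_2$ in one pass; both are valid and essentially equivalent.
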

\begin{remark}
The function $(-1)^{\epsilon}$ is the `off-diagonal' part of what Kac \cite[Section 7.8]{KacBook} calls an \emph{asymmetry function}.
\end{remark}
\begin{proof}
If $i>j$ then using \eqref{eqn:CommutatorLambda} we have
\[ \wh{\beta}_i \wh{\beta}_j=(-1)^{B(\beta_i,\beta_j)}\wh{\beta}_j\wh{\beta}_i=(-1)^{B(\beta_i,\beta_j)}\wh{\beta_i\beta_j},\]
while if $i\le j$ then
\[ \wh{\beta}_i\wh{\beta}_j=\wh{\beta_i\beta_j}.\]
This verifies
\[ \sigma(\beta_i,\beta_j)=(-1)^{\epsilon(\beta_i,\beta_j)}\]
for $i,j=1,...,r$.  On the other hand, using the definition of the lift \eqref{eqn:DefOfLift} and the commutation relation \eqref{eqn:CommutatorLambda}, one sees that $\sigma$ is bimultiplicative:
\[ \sigma(\eta_1+\eta_2,\eta)=\sigma(\eta_1,\eta)\sigma(\eta_2,\eta), \qquad \sigma(\eta,\eta_1+\eta_2)=\sigma(\eta,\eta_1)\sigma(\eta,\eta_2).\]
\end{proof}

\subsection{The group $T\ltimes \Pi^\bas$.}
Define a group homomorphism
\begin{equation} 
\label{eqn:defkappa}
\kappa \colon \Pi \rightarrow \Hom(T,U(1))=\Pi^\ast, \qquad \kappa_\eta(t)=t^{-B^\flat(\eta)}.
\end{equation}
It is known (cf. \cite[Section 2.2]{FHTII}, \cite{PressleySegal}) that in the subgroup $T\ltimes \Pi^\bas \subset LG^\bas$, elements $t \in T$ and $\eta \in \Pi^\bas$ satisfy the commutation relation
\[ \heta t \heta^{-1} t^{-1}=\kappa_\eta(t).\]
Moreover the data $(\sigma,\kappa)$ determine the group $T\ltimes \Pi^\bas$ (up to isomorphism).  Let $T\ltimes \Pi^\triv$ denote the analogous group defined by the data $(1,\kappa)$, i.e. $\Pi^\triv=\Pi \times U(1)$ is the trivial central extension, and the commutator map for $T$, $\Pi^\triv$ is the same $\kappa$ defined in \eqref{eqn:defkappa}.
\ignore{I believe this is exactly the same convention as in FHTII, because their $\kappa$ is indeed related to $-B$.  This is also consistent with $-B$ being the restriction of the DD class to $T$, as in Eckhard's conjugacy classes paper.  In our Witten deformation paper, we used the same commutation relation $\eta t \eta^{-1}t^{-1}=t^{-\ell B^\flat(\ol{\eta})}$, except that our $\kappa$ there was exactly $\ell B^\flat$, i.e. our convention for $\kappa$ was opposite, although our formulas for the commutation relation was correct...I believe!}
In detail, if we use the section $\Pi \rightarrow \Pi^\bas$ defined in \eqref{eqn:DefOfLift} to view $\Pi^\bas$ (topologically) as a product $\Pi \times U(1)$, then the group multiplication in $T\ltimes \Pi^\bas$ is
\begin{equation} 
\label{eqn:MultNonTriv}
(t_1,\eta_1,z_1)(t_2,\eta_2,z_2)=(t_1t_2,\eta_1+\eta_2,\kappa_{\eta_1}(t_2)\sigma(\eta_1,\eta_2)z_1z_2)
\end{equation}
while in $T\ltimes \Pi^\triv$ the group multiplication is
\begin{equation} 
\label{eqn:MultTriv}
(t_1,\eta_1,z_1)(t_2,\eta_2,z_2)=(t_1t_2,\eta_1+\eta_2,\kappa_{\eta_1}(t_2)z_1z_2).
\end{equation}

As we saw above, in general $\Pi^\bas$ is not isomorphic to $\Pi^\triv$ ($\Pi^\bas$ need not be abelian).  Perhaps surprisingly, the distinction between $\Pi^\bas$, $\Pi^\triv$ disappears after taking semi-direct product with $T$.
\begin{proposition}
\label{prop:NonCanonicalIso}
The groups $T\ltimes \Pi^\bas$, $T\ltimes \Pi^\triv$ are (non-canonically) isomorphic.
\end{proposition}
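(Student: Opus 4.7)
The plan is to exhibit an explicit isomorphism that absorbs the twist of $\Pi^\bas$ into a shift of the $T$-coordinate by a carefully chosen homomorphism $\Pi\to T$. Using the section \eqref{eqn:DefOfLift} I identify both groups topologically with $T\times\Pi\times U(1)$, with multiplications \eqref{eqn:MultNonTriv} and \eqref{eqn:MultTriv} respectively, and look for an isomorphism of the form
\[ \phi(t,\eta,z) = \bigl(t\,f(\eta),\,\eta,\,z\bigr), \]
for a continuous homomorphism $f\colon\Pi\to T$. Directly comparing the two multiplication rules, $\phi$ preserves products if and only if (i) $f(\eta_1+\eta_2) = f(\eta_1)f(\eta_2)$, and (ii) $\kappa_{\eta_1}\bigl(f(\eta_2)\bigr) = \sigma(\eta_1,\eta_2)$ for all $\eta_1,\eta_2 \in \Pi$.

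To realize (i) and (ii), I will write $f = \exp\circ F$ for a $\bZ$-linear map $F\colon\Pi\to\t$, so that (i) is automatic and, using $\kappa_{\eta_1}(f(\eta_2)) = e^{-2\pi i B(\eta_1,F(\eta_2))}$, condition (ii) reduces to the congruence
\[ B\bigl(\eta_1,\,F(\eta_2)\bigr) \equiv -\tfrac{1}{2}\,\epsilon(\eta_1,\eta_2)\pmod{\bZ}. \]
Since $B$ is nondegenerate and $\beta_1,\ldots,\beta_r$ is simultaneously a $\bZ$-basis of $\Pi$ and an $\bR$-basis of $\t$, I define $F_j := F(\beta_j) \in \t$ as the unique solution of the linear system
\[ B(\beta_i,F_j) = \begin{cases} -\tfrac{1}{2}\,B(\beta_i,\beta_j) & \text{if } i>j, \\ 0 & \text{if } i\le j, \end{cases} \]
and extend $\bZ$-linearly to $F\colon\Pi\to\t$.

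A short bilinearity computation then gives $B(\eta_1,F(\eta_2)) = -\tfrac{1}{2}\,\epsilon(\eta_1,\eta_2)$ on the nose, and since $\epsilon(\eta_1,\eta_2) \in \bZ$ this yields $\kappa_{\eta_1}(f(\eta_2)) = e^{\pi i\,\epsilon(\eta_1,\eta_2)} = (-1)^{\epsilon(\eta_1,\eta_2)} = \sigma(\eta_1,\eta_2)$, verifying (ii). The map $\phi$ is manifestly a homeomorphism with continuous inverse $(t,\eta,z)\mapsto (tf(\eta)^{-1},\eta,z)$, completing the construction. The ``non-canonical'' caveat reflects the dependence on the ordered basis $\beta_1,\ldots,\beta_r$ used to define $\epsilon$ and the $F_j$.

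There is no serious obstacle: once one commits to the ansatz $\phi(t,\eta,z) = (tf(\eta),\eta,z)$, the problem reduces to matching the $\pm1$-valued cocycle $\sigma$ with the pairing $\kappa\circ f$, and nondegeneracy of $B$ on $\t$ is exactly what permits this matching to be realized \emph{exactly} rather than merely up to a coboundary. Conceptually, the statement says that although $\sigma$ is a nontrivial class on $\Pi$ alone (indeed $\Pi^\bas$ is nonabelian), it is trivialized by the commutator pairing $\kappa$ between $T$ and $\Pi$ once one passes to the semi-direct product.
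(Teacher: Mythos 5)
Your proposal is correct and is essentially the paper's own argument: the paper likewise seeks an isomorphism of the form $(t,\eta,z)\mapsto(t\exp(\eta_\epsilon),\eta,z)$ and defines $\eta_\epsilon=\tfrac{1}{2}B^\sharp(\epsilon(-,\eta))$, which is exactly (up to an irrelevant sign, since $\epsilon$ is integer-valued) the solution of the linear system you write down basis-by-basis. The two presentations differ only in that the paper packages the solution via the musical isomorphism $B^\sharp$ rather than solving for the $F_j$ explicitly.
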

\begin{proof}
We will show that the additional sign $\sigma(\eta_1,\eta_2)$ can be absorbed into the phase $\kappa_{\eta_1}(t_2)$, by choosing an appropriate identification $T\ltimes \Pi^\bas \rightarrow T\ltimes \Pi^\triv$.  

For $\eta \in \Pi$, define
\[ \eta_\epsilon=\tfrac{1}{2}B^\sharp(\epsilon(-,\eta)) \in \t,\]
where here one views the contraction $\epsilon(-,\eta)$ as an element of $\t^\ast$, and then uses $B^\sharp$ to convert this to an element of $\t$.  The image of the map $\eta \in \Pi \mapsto \eta_\epsilon \in \t$ is contained in $\tfrac{1}{2}B^\sharp(\Pi^\ast)$.  By construction
\begin{equation}
\label{eqn:AbsorbingProperty}
\exp(\eta_\epsilon)^{B^\flat(\mu)}=e^{\pi \i \epsilon(\mu,\eta)}=\sigma(\mu,\eta), \qquad \eta, \mu \in \Pi.
\end{equation}
Define
\[ \Psi \colon T\ltimes \Pi^\bas \rightarrow T\ltimes \Pi^\triv, \qquad \Psi(t,\eta,z)=(t\exp(\eta_\epsilon),\eta,z). \]
A short calculation using \eqref{eqn:AbsorbingProperty} shows that $\Psi$ is a group homomorphism.
\end{proof}

\subsection{The $C^\ast$ algebra of $T\ltimes \Pi^\bas$.}
Using Proposition \ref{prop:NonCanonicalIso}, $T\ltimes \Pi^\bas \simeq T\ltimes \Pi^\triv$.  There is an obvious isomorphism $(t,\eta,z) \in T\ltimes \Pi^{\triv}\mapsto (t,z,\eta) \in T^\triv \rtimes \Pi$, where $T^\triv=T \times U(1)$ is the trivial central extension.  

If $G_1\ltimes G_2$ is a semi-direct product of locally compact groups, then there is an isomorphism
\[ C^\ast(G_1\ltimes G_2) \simeq G_1 \ltimes C^\ast(G_2) \]
induced by the natural map $C_c(G_1\times G_2) \rightarrow C_c(G_1,C_c(G_2))$, cf. \cite{WilliamsCrossedProducts}.  Thus
\[ C^\ast(T^\triv \rtimes \Pi)\simeq C^\ast(T^\triv) \rtimes \Pi.\]
The group $T^\triv=T\times U(1)$ is abelian, hence $C^\ast(T^\triv)$ is isomorphic to $C_0(\Pi^\ast \times \bZ)$ (the Pontryagin dual).  Thus
\[ C^\ast(T^\triv) \rtimes \Pi \simeq C_0(\Pi^\ast \times \bZ)\rtimes \Pi.\]
If $\xi \in \Pi^\ast$ and $\ell \in \bZ$, the isomorphism $C_0(\Pi^\ast \times \bZ) \rightarrow C^\ast(T^\triv)$ sends $\delta_{(\xi,\ell)}\in C_0(\Pi^\ast \times \bZ)$ to its Fourier transform
\[ e_{\xi,\ell} \in C^\ast(T^\triv), \qquad e_{\xi,\ell}(t,z)=t^\xi z^\ell. \]
Using the commutation relation in $T\ltimes \Pi^{\triv}\simeq T^{\triv}\rtimes \Pi$, the action of $\eta \in \Pi$ on $e_{\xi,\ell}$ is
\[ (\eta \cdot e_{\xi,\ell})(t,z)=e_{\xi,\ell}(t,t^{\eta}z)=t^{\xi+\ell\eta}z^\ell.\]
This corresponds to the action of $\Pi$ on the Pontryagin dual $\Pi^\ast \times \bZ$ by
\begin{equation} 
\label{eqn:LevelNAction}
\eta \cdot (\xi,\ell)=(\xi+\ell\eta,\ell).
\end{equation}
We see that
\[ C_0(\Pi^\ast \times \bZ) \rtimes \Pi=\bigoplus_{\ell \in \bZ} C_0(\Pi^\ast) \rtimes_\ell \Pi \]
where $C_0(\Pi^\ast) \rtimes_\ell \Pi$ denotes the crossed product formed using the `level $\ell$' action \eqref{eqn:LevelNAction}.  The sub-algebra $C^\ast(T\ltimes \Pi^\bas)_{(\ell)}$ corresponds to the $\ell^{th}$ summand.

For $\ell=0$ the action of $\Pi$ on $\Pi^\ast$ is trivial, hence
\[ C_0(\Pi^\ast)\rtimes_0 \Pi \simeq C_0(\Pi^\ast)\otimes C^\ast(\Pi) \simeq C_0(\Pi^\ast \times T^\vee) \]
where $T^\vee=\t^\ast/\Pi^\ast$ is the Pontryagin dual of $\Pi$.  For $\ell \ne 0$, the algebra $C_0(\Pi^\ast)\rtimes_\ell\Pi$ is isomorphic to a direct sum of finitely many copies of the compact operators on $L^2(\Pi)$, indexed by the finite quotient $\Pi^\ast/\ell\Pi$.  One can deduce this from the Takai duality theorem, but it is also not difficult to argue directly as follows.  One has a faithful Schr{\"o}dinger-type representation of $C_0(\Pi^\ast)\rtimes_\ell\Pi$ on $L^2(\Pi^\ast)$, where $C_0(\Pi^\ast)$ acts by multiplication operators, and $\Pi$ acts by translations as in \eqref{eqn:LevelNAction}.  The decomposition of $\Pi^\ast$ into cosets $[\xi]=\xi+\ell\Pi$ for the $\Pi$ action gives a direct sum decomposition
\begin{equation}
\label{eqn:DirectSumLambda} 
L^2(\Pi^\ast)=\bigoplus_{[\xi] \in \Pi^\ast/\ell\Pi} L^2([\xi]),
\end{equation}
and the action of $C_0(\Pi^\ast)\rtimes_\ell\Pi$ preserves this decomposition.  For $\eta \in \Pi$ and $\mu \in \Pi^\ast$ let $\theta_{\eta,\mu} \in C_c(\Pi,C_c(\Pi^\ast))=C_c(\Pi \times \Pi^\ast)$ be the delta function at $(\eta,\mu) \in \Pi \times \Pi^\ast$.  As an element of $C_0(\Pi^\ast)\rtimes_n \Pi$, $\theta_{\eta,\mu}$ acts on $L^2(\Pi^\ast)$ by the rank $1$ linear transformation mapping $\delta_{\mu}$ to $\delta_{\mu+\ell\eta}$.  Such rank $1$ operators generate the algebra of all compact operators on $L^2(\Pi^\ast)$ that preserve the direct sum decomposition \eqref{eqn:DirectSumLambda}, and thus
\[ C_0(\Pi^\ast)\rtimes_\ell \Pi \simeq \bigoplus_{[\xi] \in \Pi^\ast/\ell\Pi} \bK(L^2([\xi])).\]
We summarize these observations with a proposition.
\begin{proposition}
\label{prop:AlgTPi}
The group $C^\ast$ algebra $C^\ast(T\ltimes \Pi^\bas)$ is an infinite direct sum of its homogeneous ideals $C^\ast(T\ltimes \Pi^\bas)_{(\ell)}$, $\ell \in \bZ$.  A choice of group isomorphism $T\ltimes \Pi^\bas \xrightarrow{\sim}T\ltimes \Pi^\triv$ determines isomorphisms of $C^\ast$ algebras
\[ C^\ast(T\ltimes \Pi^\bas)_{(0)}\xrightarrow{\sim} C_0(\Pi^\ast \times T^\vee),\]
and for $\ell \ne 0$
\[ C^\ast(T\ltimes \Pi^\bas)_{(\ell)}\xrightarrow{\sim} \bigoplus_{[\xi] \in \Pi^\ast/\ell\Pi} \bK(L^2([\xi])),\]
where $[\xi]=\xi+\ell\Pi \subset \Pi^\ast$ is a coset for the `level $\ell$' action of $\Pi$ on $\Pi^\ast$.
\end{proposition}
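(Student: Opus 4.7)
The plan is to assemble the proof directly from the chain of identifications carried out in the paragraphs preceding the statement; the work is essentially already done, and a proof should just organize it clearly.

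First, I would establish the direct sum decomposition. This is a special case of \eqref{eqn:Homogeneous} applied to the $C^\ast$-algebra of the $U(1)$-central extension $T\ltimes \Pi^\bas$ (viewed over $T\ltimes \Pi$ with trivial coefficients), where the $C^\ast(U(1))$-algebra structure on $C^\ast(T\ltimes \Pi^\bas)$ produces fibres over $\bZ$. Here there is nothing particular to the semi-direct product structure; one only needs the central circle.

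Second, I would identify each homogeneous ideal explicitly. Apply Proposition \ref{prop:NonCanonicalIso} to rewrite $T\ltimes \Pi^\bas \simeq T\ltimes \Pi^\triv$; the latter, by the commutation relation \eqref{eqn:MultTriv}, is naturally the semi-direct product $T^\triv \rtimes \Pi$ where $T^\triv=T \times U(1)$. The standard isomorphism $C^\ast(G_1\ltimes G_2) \simeq G_1 \ltimes C^\ast(G_2)$ then yields
\[ C^\ast(T\ltimes \Pi^\bas) \simeq C^\ast(T^\triv)\rtimes \Pi.\]
Since $T^\triv$ is abelian, Pontryagin duality identifies $C^\ast(T^\triv)\simeq C_0(\Pi^\ast \times \bZ)$, and a direct calculation (done explicitly in the preceding discussion) shows the induced $\Pi$-action on $\Pi^\ast \times \bZ$ is the level-stratified action \eqref{eqn:LevelNAction}. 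The decomposition $\bZ=\bigsqcup_{\ell}\{\ell\}$ is $\Pi$-invariant, so the crossed product splits as $\bigoplus_\ell C_0(\Pi^\ast)\rtimes_\ell \Pi$, and matching this with the homogeneous decomposition via the $U(1)$-weight gives $C^\ast(T\ltimes \Pi^\bas)_{(\ell)}\simeq C_0(\Pi^\ast)\rtimes_\ell \Pi$.

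Third, I would analyze each level. For $\ell=0$ the action is trivial, giving $C_0(\Pi^\ast)\otimes C^\ast(\Pi)\simeq C_0(\Pi^\ast)\otimes C(T^\vee)\simeq C_0(\Pi^\ast \times T^\vee)$. For $\ell \neq 0$ the action of $\Pi$ on $\Pi^\ast$ is free with finite quotient $\Pi^\ast/\ell\Pi$. I would then exhibit the faithful Schr\"odinger representation on $L^2(\Pi^\ast)$ — $C_0(\Pi^\ast)$ acting by multiplication, $\Pi$ by translations — and observe that it preserves the orbit decomposition \eqref{eqn:DirectSumLambda}. The delta functions $\theta_{\eta,\mu}\in C_c(\Pi\times \Pi^\ast)$ represent rank-one operators $\delta_\mu \mapsto \delta_{\mu+\ell\eta}$ on the orbit $[\mu]$, and finite linear combinations of these densely span the compact operators on each $L^2([\xi])$. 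Completeness of the Schr\"odinger representation therefore gives the claimed identification with a finite direct sum of compact operator algebras.

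The only step requiring genuine (though routine) verification is the explicit computation of the $\Pi$-action after Pontryagin duality, which is a short Fourier calculation already carried out in the text above. There is no real obstacle; the proposition is essentially a summary, and one could legitimately write ``this has been established in the discussion above.''
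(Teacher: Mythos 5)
Your proposal is correct and follows the same route as the paper: the proposition there is explicitly stated as a summary of the immediately preceding discussion, which proceeds exactly as you describe (Proposition \ref{prop:NonCanonicalIso}, the identification $T\ltimes\Pi^\triv\simeq T^\triv\rtimes\Pi$, the isomorphism $C^\ast(G_1\ltimes G_2)\simeq G_1\ltimes C^\ast(G_2)$, Pontryagin duality, the level-$\ell$ stratification of the dual action, and the Schr\"odinger representation with rank-one delta functions for $\ell\neq 0$). Nothing is missing.
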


\subsection{The map $\K_0(C^\ast_\tau(T\times \Pi)) \rightarrow R^{-\infty}(T)^{\ell \Pi}$.}\label{sec:KThyTPi}
Let $\tau$ be some integer multiple $0\ne \ell \in \bZ$ of the basic central extension of $LG$, and let $T\ltimes \Pi^\tau$ denote the restriction of $LG^\tau$ to the subgroup $T\times \Pi \subset LG$.  For $\ell=1$ this is precisely the group $T\ltimes \Pi^\bas$ considered above.  Elements $t \in T$ and $\wh{\eta} \in \Pi^\tau$ satisfy the commutation relation
\[ \heta t \heta^{-1} t^{-1}=\kappa_\eta^\ell(t)=t^{-\ell B^\flat(\eta)} \in U(1),\]
see equation \eqref{eqn:defkappa}.
\ignore{I believe this is exactly the same convention as in FHTII, because their $\kappa$ is indeed related to $-B$.  This is also consistent with $-B$ being the restriction of the DD class to $T$, as in Eckhard's conjugacy classes paper.  In our Witten deformation paper, we used the same commutation relation $\eta t \eta^{-1}t^{-1}=t^{-\ell B^\flat(\ol{\eta})}$, except that our $\kappa$ there was exactly $\ell B^\flat$, i.e. our convention for $\kappa$ was opposite, although our formulas for the commutation relation was correct...I believe!}
\ignore{
Recall that we are using the basic inner product to identify $\t=\t^\ast$, and hence $\Pi$ is identified with a sub-lattice of $\Pi^\ast$.  For $\ell=0$, the algebra $C^\ast_\tau(T\times \Pi) \simeq C^\ast(T \times \Pi)$ is abelian, hence isomomorphic to the algebra of continuous functions vanishing at infinity on the Pontryagin dual $\Pi^\ast \times T^\vee$, where $T^\vee=\t^\ast/\Pi^\ast$ is the Pontryagin dual of $\Pi$.  

For $\ell \ne 0$, choose a lattice basis $\eta_1,...,\eta_r$ of $\Pi$, and lifts $\heta_1,...,\heta_r \in \Pi^\tau$.  Any element $\heta \in \Pi^\tau$ has a unique representation as a product
\begin{equation} 
\label{eqn:PhaseFactor}
z\heta_1^{n_1} \cdots \heta_r^{n_r}, \qquad z \in U(1), n_i \in \bZ.
\end{equation}
Use this to define a representation of $T\ltimes \Pi^\tau$ on $L^2(\Pi^\ast)$ by
\[ (t,\heta)\cdot f(\xi)=zt^\xi f(\xi-\ell \eta),\]
where $z$ is the phase factor appearing in the decomposition \eqref{eqn:PhaseFactor} for $\heta$.  The induced representation of $C^\ast_\tau(T\times \Pi)$ turns out to be faithful, with image equal to the block diagonal sub-algebra:
\begin{equation} 
\label{eqn:AlgTLambda}
\bigoplus_{[\xi] \in \Pi^\ast/ \ell \Pi} \bK\big(L^2([\xi])\big),
\end{equation}
where $[\xi]=\xi+\ell \Pi \subset \Pi^\ast$ denotes a coset for the action of $\ell \Pi$ (viewed as a sub-lattice of $\Pi^\ast$) on $\Pi^\ast$ by translation.
\begin{remark}
The reason for the slightly awkward definition involving a lattice basis and \eqref{eqn:PhaseFactor} is that for general $G$ the central extension $\Pi^\tau$ pulled back from $LG^\tau$ is non-trivial; this means there is a small operator ordering ambiguity, and so to get a representation we must choose an ordering.  In the appendix we describe a slightly surprising fact: although $\Pi^\tau$ is not isomorphic to $\Pi \times U(1)$ in general, after taking semi-direct product with $T$, it is as if it were: $T\ltimes \Pi^\tau \simeq T\ltimes (\Pi \times U(1))$, although the isomorphism is not canonical.  With this in hand one easily deduces a description of $C^\ast(T\ltimes \Pi^\tau)$, see the appendix.     
\end{remark}}

The structure of $C^\ast_\tau(T\times \Pi)$ follows immediately from Proposition \ref{prop:AlgTPi}, and in particular its K-theory is
\[ \K_0(C^\ast_\tau(T\times \Pi))\simeq \bigoplus_{[\xi] \in \Pi^\ast/\ell \Pi} \K_0\big(\bK(L^2([\xi]))\big).\]
The K-theory of $\bK(L^2([\xi]))$ is a copy of the integers, generated by the finitely generated, projective module $L^2([\xi])$.  Let $R^{-\infty}(T)^{\ell\Pi}$ denote the subspace of $R^{-\infty}(T)$ consisting of formal characters invariant under the `level $\ell$' action of $\Pi$, that is, formal sums
\[ \sum_{\xi \in \Pi^\ast} a_\xi e_\xi, \qquad e_\xi(t)=t^\xi \]
where the coefficients satisfy $a_{\xi+\ell \eta}=a_\xi$ for all $\eta \in \Pi$ (we identify $\Pi$ with a sublattice of $\Pi^\ast$ using the basic inner product).  There is a map
\[ \K_0\big(\bK(L^2([\xi]))\big) \rightarrow R^{-\infty}(T)^{\ell \Pi}\]
sending the generator $L^2([\xi])$ to its formal $T$-character:
\[ L^2([\xi]) \mapsto \sum_{\eta \in \Pi} e_{\xi+\ell \eta}.\]
Put differently this formal character has multiplicity function given by the indicator function of the coset $[\xi]$ in $\Pi^\ast$.  It is clear that this map gives an isomorphism of abelian groups:
\begin{equation}
\label{eqn:IsoFormalChar} 
\K_0\big(C^\ast_\tau(T\times \Pi)\big) \xrightarrow{\sim} R^{-\infty}(T)^{\ell \Pi}.
\end{equation}

\section{The map $\scr{I}\colon \K_0^G(G,\A) \rightarrow R^{-\infty}(T)^{W_\aff-\anti,\, \ell}$}\label{sec:defI}
Let $\A$ be a $G$-equivariant Dixmier-Douady bundle over $G$, with Dixmier-Douady class $\ell \in \bZ\simeq H^3_G(G,\bZ)$ and $\ell>0$.  In this section we construct a map
\[ \scr{I}\colon \K_0^G(G,\A) \rightarrow R^{-\infty}(T)^{W_\aff-\anti,\, \ell} \]
and show that in a suitable sense it is an inverse of the Freed-Hopkins-Teleman isomorphism.  We begin by fixing a model for $\A$ as in Section \ref{sec:DDoverG}:
\begin{equation} 
\label{eqn:defA2}
\A=PG \times_{LG} \bK(V^\ast),
\end{equation}
where $V$ is a level $\ell$ positive energy representation of $LG^\bas$.  Let $LG^\tau$ denote the central extension of $LG$ corresponding to $\ell$ times the generator $LG^\bas$, thus $V$ is a representation of $LG^\tau$ such that the central circle acts with weight $1$.  
\ignore{
subsection{A tubular neighborhood of $T$.}\label{sec:TubNeigh}
Let $U$ be a small $N(T)$-invariant tubular neighborhood of $T$ in $G$, with projection map $\pi_T \colon U \rightarrow T$.  A neighborhood $U$ can be described explicitly: for $\epsilon>0$ sufficiently small, and $\st{B}_\epsilon(\t^\perp)$ an $\epsilon$-ball in $\t^\perp \subset \g$, the map
\[ T \times \st{B}_\epsilon(\t^\perp), \qquad (t,\xi)\mapsto t\exp(\xi),\]
is a $N(T)$-equivariant diffeomorphism onto its image, which we may take to be $U$, with $\pi_T$ the projection to the first factor.

Recall the Dixmier-Douady bundle $\A_T \rightarrow T$ constructed in Section \ref{sec:MoritaMorphism}.  Let $\A_U=\pi_T^\ast \A_T$.  By pullback of \eqref{eqn:MoritaMorphismT} we obtain a Morita equivalence $\A|_U \dashrightarrow \A_U$, and hence also an isomorphism
\begin{equation} 
\label{eqn:MoritaMorph}
\K_0^T(U,\A|_U) \xrightarrow{\sim} \K_0^T(U,\A_U).
\end{equation}
There is a canonical identification $\t^\perp \simeq \g/\t$.  The complexification $(\g/\t)_\bC \simeq \n_+\oplus \n_-$, where $\n_+$ (resp. $\n_-$) is the direct sum of the positive (resp. negative) root spaces.  We choose a complex structure on $\g/\t$ such that $(\g/\t)^{1,0}=\n_-$.  This choice of complex structure determines a Bott-Thom isomorphism
\begin{equation} 
\label{eqn:BottThom}
\K_0^T(U,\A_U) \simeq \K_0^T(T,\A_T).
\end{equation}
\ignore{
\begin{remark}
\label{rem:LiftU}
We think of $U$ as a `thickening' of $T$.  In \cite[Section 6.4]{LMSspinor} we showed that $U$ has a `lift' to $PG$, i.e. there is a smooth submanifold $\U \subset PG$ with $\t \subset \U$ and $\dim(\U)=\dim(U)=\dim(G)$, such that $\U/\Pi=U$.  Briefly, one chooses a connection on the principal $LG$-bundle $q \colon PG \rightarrow G$, which is used to lift the Euler vector field for $U \rightarrow T$ to $q^{-1}(U)$.  The flow of this vector field determines a tubular neighborhood embedding $q^{-1}(T) \times \st{B}_\epsilon(\t^\perp) \hookrightarrow PG$, and $\U$ is obtained as the image of $\t \times \st{B}_\epsilon(\t^\perp) \subset q^{-1}(T)\times \st{B}_\epsilon(\t^\perp)$. 
\end{remark}
}

subsection{The definition of $\scr{I}$.}\label{sec:DefI}}

Let $U$ be a small $N(T)$-invariant tubular neighborhood of $T$ in $G$, with projection map $\pi_T \colon U \rightarrow T$.  A neighborhood $U$ can be described explicitly: for $\epsilon>0$ sufficiently small, and $\st{B}_\epsilon(\t^\perp)$ an $\epsilon$-ball in $\t^\perp \subset \g$, the map
\[ T \times \st{B}_\epsilon(\t^\perp), \qquad (t,\xi)\mapsto t\exp(\xi),\]
is a $N(T)$-equivariant diffeomorphism onto its image, which we may take to be $U$, with $\pi_T$ the projection to the first factor.  The first stage in the definition of $\scr{I}$ is the restriction map
\begin{equation} 
\label{map:restrict}
\K_0^G(G,\A) \rightarrow \K_0^T(U,\A|_U)
\end{equation}
induced by the `extension by $0$' algebra homomorphism $C_0(\A|_U)\hookrightarrow C(\A)$.  

Recall the Dixmier-Douady bundle $\A_T \rightarrow T$ constructed in Section \ref{sec:MoritaMorphism}.  Let $\A_U=\pi_T^\ast \A_T$.  By pullback of \eqref{eqn:MoritaMorphismT} we obtain a Morita equivalence $\A|_U \dashrightarrow \A_U$, and hence also an isomorphism
\begin{equation} 
\label{eqn:MoritaMorph}
\K_0^T(U,\A|_U) \xrightarrow{\sim} \K_0^T(U,\A_U).
\end{equation}
There is a canonical identification $\t^\perp \simeq \g/\t$.  The complexification $(\g/\t)_\bC \simeq \n_+\oplus \n_-$, where $\n_+$ (resp. $\n_-$) is the direct sum of the positive (resp. negative) root spaces.  We choose a complex structure on $\g/\t$ such that $(\g/\t)^{1,0}=\n_-$.  This choice of complex structure determines a Bott-Thom isomorphism
\begin{equation} 
\label{eqn:BottThom}
\K_0^T(U,\A_U) \xrightarrow{\sim} \K_0^T(T,\A_T).
\end{equation}
\ignore{
The Morita morphism \eqref{eqn:MoritaMorph}, composed with the Bott-Thom isomorphism \eqref{eqn:BottThom} gives an isomorphism
\begin{equation}
\label{map:MoritaBott}
\K_0^T(U,\A|_U) \xrightarrow{\sim} \K_0^T(T,\A_T).
\end{equation} }
By equation \eqref{eqn:defAT} and Proposition \ref{prop:modRieffelFixedPt}, the algebra of sections $C_0(\A_T)$ has an alternate description as a twisted crossed product algebra $\Pi \ltimes_\tau C_0(\t)$.  The isomorphism $C_0(\A_T)\xrightarrow{\sim} \Pi\ltimes_{\tau}C_0(\t)$ yields an isomorphism of K-homology groups
\begin{equation}
\label{map:CrossedProd} 
\K_0^T(T,\A_T) \xrightarrow{\sim}\K^0_T(\Pi \ltimes_{\tau} C_0(\t)).
\end{equation}
By Proposition \ref{prop:modGreenJulg} there is a Green-Julg isomorphism
\begin{equation}
\label{map:GreenJulg}
\K^0_T(\Pi \ltimes_{\tau} C_0(\t))\xrightarrow{\sim} \K^0_{T\ltimes \Pi^\tau}(C_0(\t))_{(1)}.
\end{equation}
Since $\Pi$ (hence also $T\ltimes \Pi^\tau$) acts cocompactly on $\t$, we can apply the analytic assembly map:
\begin{equation}
\label{map:assembly}
\K^0_{T\ltimes \Pi^\tau}(C_0(\t)) \rightarrow \K_0(C^\ast(T\ltimes \Pi^\tau)).
\end{equation}
Restricted to $\K^0_{T\ltimes \Pi^\tau}(C_0(\t))_{(1)}$, the image of the assembly map is contained in the direct summand isomorphic to $\K_0(C^\ast_\tau(T\times \Pi))$, and the latter is isomorphic to $R^{-\infty}(T)^{\ell \Pi}$ by \eqref{eqn:IsoFormalChar}.  Composing the maps \eqref{map:restrict}---\eqref{map:assembly} completes the construction of the desired map
\[ \scr{I}\colon \K_0^G(G,\A) \rightarrow R^{-\infty}(T)^{\ell \Pi}.\]
We verify in the next two subsections that the range is the subspace $R^{-\infty}(T)^{W_\aff-\anti, \, \ell}$.
\begin{remark}
The vector space $\t$ is a classifying space for proper actions of $T\ltimes \Pi^\tau$.  The Baum-Connes conjecture says that the assembly map \eqref{map:assembly} is an isomorphism.  The conjecture has been proved for a very large class of groups including, for example, all amenable groups, of which $T\ltimes \Pi^\tau$ is an example (we thank Shintaro Nishikawa for pointing this out).  Consequently, each of the maps in the definition of $\scr{I}$ except the first \eqref{map:restrict} are isomorphisms.
\end{remark}
\begin{remark}
There are slight variations in the order of the maps in the definition of $\scr{I}$ that are equivalent.  For example, let $\U \simeq \t \times \st{B}_\epsilon(\t^\perp)$ be the fibre product $\t \times_T U$, and for $x \in \K_0^G(G,\A)$ let $x_U$ denote the class in $\KK_{T\ltimes \Pi^\tau}(C_0(\U),\bC)_{(1)}$ obtained by applying the composition
\[ \K_0^G(G,\A) \rightarrow \K_0^T(U,\A|_U)\xrightarrow{\sim} \K_0^T(U,\A_U) \xrightarrow{\sim} \KK_T(\Pi\ltimes_\tau C_0(\U),\bC) \xrightarrow{\sim} \KK_{T\ltimes \Pi^\tau}(C_0(\U),\bC)_{(1)} \]
similar to the definition of $\scr{I}$ given above.  Then, identifying $C_0(\U)\simeq C_0(\st{B}_{\epsilon}(\t^\perp))\otimes C_0(\t)$, we have
\begin{equation} 
\label{eqn:DescentFormula}
\scr{I}(x)=\mu_{T\ltimes \Pi^\tau}(\beta \otimes_{C_0(\st{B}_{\epsilon}(\t^\perp))} [x_U])=[c]\otimes_{S \ltimes C_0(\t)} j_S(\beta \otimes_{C_0(\st{B}_{\epsilon}(\t^\perp))} x_U),
\end{equation}
where $\beta \in \K^0_T(\st{B}_{\epsilon}(\t^\perp))$ is the Bott-Thom element, $S=T\ltimes \Pi^\tau$, and for the second equality we use Kasparov's description of the assembly map (Section \ref{sec:Assembly}).
\end{remark}

\subsection{Weyl group symmetry.}
The subgroup $N(T)\subset G$ normalizes $\Pi^\tau$ inside $LG^\tau$.  It follows that there is an action of $N(T)$ by conjugation on $T\ltimes \Pi^\tau$, $L^2_\tau(\Pi)$, and $\Pi \ltimes_\tau C_0(\t) \simeq C_0(\A_T)$.  Hence each of the $C^\ast$ algebras appearing in the definition of $\scr{I}$ is in a natural way an $N(T)$-$C^\ast$ algebra.  There is only one aspect of the definition which is not $N(T)$-equivariant, namely the Bott-Thom map.

Let $N$ be a locally compact group and let $H$ be the connected component of the identity in $N$.  Assume $N$ is unimodular for simplicity.  Let $A$ be an $N$-$C^\ast$ algebra, with $\alpha_A \colon N \rightarrow \tn{Aut}(A)$ the action map.  For $n \in N$ we can view $\alpha_A(n)$ as an isomorphism of $H$-$C^\ast$ algebras $A \rightarrow A^{(n)}$, where $A^{(n)}$ denotes the $C^\ast$ algebra $A$ equipped with the conjugated $H$-action $\alpha_{A^{(n)}}(n^\prime):=\alpha_A(nn^\prime n^{-1})$.  Thus if $A$, $B$ are $N$-$C^\ast$ algebras then any $n \in N$ induces a map
\[ \KK_H(A,B) \rightarrow \KK_H(A^{(n)},B^{(n)}). \]
Composing with the `restriction homomorphism' (\cite[Definition 3.1]{KasparovNovikov}) for the automorphism $\Ad_n \in \Aut(H)$, we obtain an automorphism
\[ \theta_n \colon \KK_H(A,B) \rightarrow \KK_H(A,B). \]
See \cite[Appendix A]{LSQuantLG} for details (note that the notation in \cite[Appendix A]{LSQuantLG} is different).  The automorphism $\theta_n$ acts trivially on elements in the image of the restriction map from $\KK_N(A,B)$, and only depends on the class of the element $n$ in the component group $N/H$.

Let $A$ be an $N$-$C^\ast$ algebra.  A group element $n \in N$ gives rise to an algebra automorphism
\[ \theta_n^A \colon H \ltimes A \rightarrow H \ltimes A,\]
defined on the dense subspace $C_c(H,A)$ by the formula $\theta_n^A(f)(h)=n^{-1}.f(\Ad_nh)$.  In \cite[Appendix A]{LSQuantLG} we show that the corresponding element $\theta_n^A \in \KK(H\ltimes A,H\ltimes A)$ intertwines $\theta_n$ and the descent homomorphism; more precisely
\begin{equation} 
\label{eqn:IntertwiningProperty}
j_H(\theta_n(x))=\theta_n^A \otimes j_H(x) \otimes (\theta_n^B)^{-1},
\end{equation}
for any $x \in \KK_H(A,B)$.

As a special case of the above, consider $H=T \subset N(T)=N$.  As the automorphism $\theta_n$ (resp. $\theta_n^A$, $\theta_n^B$) only depends on 
the class $w=[n] \in N(T)/T=W$, we denote it $\theta_w$ (resp. $\theta_w^A$, $\theta_w^B$).  The Bott-Thom element $\beta \in \K^0_T(\t^\perp)$ is not $N(T)$-equivariant, but instead satisfies (\cite[Proposition 4.8]{LSQuantLG})
\begin{equation} 
\label{eqn:BottAntisymmetry}
\theta_w(\beta)=(-1)^{l(w)}\bC_{\rho-w\rho}\otimes \beta,
\end{equation}
where $\rho$ is the half sum of the positive roots, and $l(w)$ is the length of the Weyl group element $w$.  This is a simple consequence of the fact that (1) $\Ad_n|_{\t^\perp}$ reverses orientation (hence grading) according to the length of $w$, (2) the weight decomposition for $\wedge \n_-$ is not symmetric under the Weyl group.

To simplify notation let $S=T\ltimes \Pi^\tau$.  By \eqref{eqn:DescentFormula} and using an argument similar to that given in  \cite[Section 4.5]{LSQuantLG}, we have
\begin{align*}
\scr{I}(x) \otimes (\theta_w^{\bC})^{-1} &= [c]\otimes j_S(\beta \otimes x_U) \otimes (\theta_w^{\bC})^{-1}\\
&=[c]\otimes (\theta_w^{C_0(\t)})^{-1}\otimes \theta_w^{C_0(\t)}\otimes j_S(\beta \otimes x_U) \otimes (\theta_w^{\bC})^{-1}\\
&=[c] \otimes (\theta_w^{C_0(\t)})^{-1} \otimes j_S(\theta_w(\beta \otimes x_U))\\
&=(-1)^{l(w)}[c]\otimes j_S(\beta \otimes x_U) \otimes \bC_{\rho-w\rho}.
\end{align*}
In third line we used \eqref{eqn:IntertwiningProperty}.  In the fourth line we used \eqref{eqn:BottAntisymmetry}, the $N(T)$-equivariance of $x_U$ (it lies in the image of the restriction map from $\KK_{N(T)\ltimes \Pi^\tau}(C_0(\U),\bC)$), and the fact that the cut-off function $c \colon \t \rightarrow [0,\infty)$ may be chosen to be $N(T)$-invariant, which implies $[c]\otimes (\theta_w^{C_0(\t)})^{-1}=[c]$.  In the last line we are also using that $\K_0(C^\ast(T\ltimes \Pi^\tau))$ is an $R(T)$-module.

\begin{corollary}
The image of $\scr{I}$ is contained in $R^{-\infty}(T)^{W_\aff-\anti,\, \ell}$, the space of formal characters that are alternating under the $\rho$-shifted level $\ell$ action \eqref{eqn:ShiftedAction} of the affine Weyl group.
\end{corollary}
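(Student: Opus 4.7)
The plan is to extract the desired alternating property by interpreting the boxed computation that immediately precedes the corollary in terms of multiplicity functions, then combining it with the level-$\ell$ lattice invariance already built into the identification $\K_0(C^\ast_\tau(T\times \Pi)) \simeq R^{-\infty}(T)^{\ell \Pi}$ from \eqref{eqn:IsoFormalChar}. Since $W_\aff = W \ltimes \Pi$ and the $\rho$-shifted level $\ell$ action \eqref{eqn:ShiftedAction} is literally the semi-direct product of the $\rho$-shifted $W$-action on $\t^\ast$ with translation by $\ell \Pi$, the two invariances together constitute precisely the statement that $\scr{I}(x) \in R^{-\infty}(T)^{W_\aff -\anti,\,\ell}$.

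First I would pin down how $\theta_w$ acts on $\K_0(C^\ast(T\ltimes \Pi^\tau))$ under the identification with formal characters. Since $\theta_w$ is induced by conjugation by a lift of $w \in W = N(T)/T$, and conjugation by such a lift acts on $T$ by $\Ad_w$, it acts on the generators $e_\xi$ by $e_\xi \mapsto e_{w\xi}$. Thus on a formal sum $m = \sum_\xi a_\xi e_\xi$, the automorphism $\theta_w^{-1}$ sends $m$ to $\sum_\xi a_{w\xi} e_\xi$, while tensoring with $\bC_{\rho - w\rho}$ corresponds to multiplication by $e_{\rho - w\rho}$, i.e. to the shift $m \mapsto \sum_\xi a_{\xi - \rho + w\rho} e_\xi$.

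Next I would read off the consequence of the displayed identity
\[ \scr{I}(x) \otimes (\theta_w^{\bC})^{-1} = (-1)^{l(w)}[c]\otimes j_S(\beta \otimes x_U) \otimes \bC_{\rho-w\rho}. \]
Writing $\scr{I}(x) = \sum_\xi a_\xi e_\xi$, the left side has $\xi$-coefficient $a_{w\xi}$ and the right side has $\xi$-coefficient $(-1)^{l(w)} a_{\xi - \rho + w\rho}$. Reindexing via $\xi = \zeta + \rho - w\rho$ gives
\[ a_{w(\zeta + \rho) - \rho} \;=\; (-1)^{l(w)}\, a_\zeta, \qquad \zeta \in \Pi^\ast, \]
which is exactly the $\rho$-shifted alternating property under the finite Weyl group $W$.

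Finally I would invoke the fact that $\scr{I}(x)$ lies in $R^{-\infty}(T)^{\ell \Pi}$ by construction, meaning $a_{\zeta + \ell \eta} = a_\zeta$ for every $\eta \in \Pi$. Combined with the previous displayed identity, this yields
\[ a_{w \bullet_\ell \zeta} \;=\; (-1)^{l(w)}\, a_\zeta \quad \text{for all } w \in W_\aff, \]
where $\bullet_\ell$ denotes the $\rho$-shifted level $\ell$ action \eqref{eqn:ShiftedAction}, completing the proof. The only delicate point, which I would spell out carefully, is the bookkeeping between the KK-theoretic statement (with $\theta_w^{-1}$ on the left) and the classical statement for characters (making sure the $W$-action is implemented by $w$ rather than $w^{-1}$, and that the sign $\rho - w\rho$ matches the shifted action rather than its inverse); once that dictionary is fixed, the result is immediate.
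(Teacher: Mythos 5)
Your proposal is correct in substance and follows the same route as the paper: the corollary is meant to be read off directly from the displayed identity $\scr{I}(x)\otimes(\theta_w^{\bC})^{-1}=(-1)^{l(w)}\scr{I}(x)\otimes\bC_{\rho-w\rho}$ together with the fact that the target of the assembly map is already $R^{-\infty}(T)^{\ell\Pi}$, and $W_{\aff}=W\ltimes\Pi$. The paper gives no further argument, so your elaboration is exactly the intended one.

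One concrete caution on the "delicate point" you flagged: with the convention you wrote down ($\theta_w^{-1}$ sends $m$ to $\sum_\xi a_{w\xi}e_\xi$, i.e.\ $e_\xi\mapsto e_{w^{-1}\xi}$), the reindexing does not close up as claimed. From $a_{w\xi}=(-1)^{l(w)}a_{\xi-\rho+w\rho}$ the substitution $\xi=\zeta+\rho-w\rho$ gives $a_{w\zeta+w\rho-w^2\rho}=(-1)^{l(w)}a_\zeta$, not $a_{w(\zeta+\rho)-\rho}=(-1)^{l(w)}a_\zeta$; the $-w\rho$ term also gets hit by $w$. The dictionary that works is the opposite one: $(\theta_w^{\bC})^{-1}$ acts on characters by $e_\xi\mapsto e_{w\xi}$ (coefficients by $a_\xi\mapsto a_{w^{-1}\xi}$), whence comparing coefficients of $e_\mu$ gives $a_{w^{-1}\mu}=(-1)^{l(w)}a_{\mu-\rho+w\rho}$, and $\mu=w\nu$ yields exactly $a_{w(\nu+\rho)-\rho}=(-1)^{l(w)}a_\nu$. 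Finally, when you splice in the $\ell\Pi$-invariance to pass from $W$ to $W_{\aff}$, you are implicitly using that translations by the coroot lattice have even length in $W_{\aff}$ (so that "invariant" and "alternating" agree on $\Pi$); this follows from $l(t_\eta)\equiv\langle 2\rho,\eta\rangle\equiv 0\pmod 2$ for $\eta\in\Pi$ and is worth one line.
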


\subsection{Inverse of the Freed-Hopkins-Teleman map.}
The commutative diagram \eqref{diagram:FHT} in the Freed-Hopkins-Teleman theorem implies $\K_0^G(G,\A)$ has a particularly simple $\bZ$-basis obtained by pushforward from $\K_G^0(\pt)$ (together with the Morita morphism $V^\ast \colon \A|_E \dashrightarrow \bC$).  These elements are represented by Kasparov triples $x_\lambda$ with trivial operator $F=0$:
\begin{equation} 
\label{eqn:GeneratorTriple}
x_\lambda=[(V^\ast \otimes R_\lambda, \iota^\ast \otimes \id_{R_\lambda}, 0)]
\end{equation}
Here $R_\lambda \in R(G)$ is the finite-dimensional irreducible representation of $G$ with highest weight $\lambda \in \Pi^\ast_k$, and $\iota^\ast \colon C_0(\A) \rightarrow \A_e \simeq \bK(V^\ast)$ is restriction of a section of $\A$ to the fibre over the identity $e \in G$, so that $\iota^\ast \otimes \id_{R_\lambda}\colon C(\A) \rightarrow \bB(V^\ast \otimes R_\lambda)$ is a representation of $C(\A)$ on the Hilbert space $V^\ast \otimes R_\lambda$, with range contained in the compact operators.  By \eqref{diagram:FHT} the corresponding element of $R_k(G)$ is the image $[R_\lambda] \in R_k(G)$ of $R_\lambda \in R(G)$ under the quotient map.  Under the isomorphism \eqref{eqn:AlternatingFormal}, $[R_\lambda]$ is sent to the formal character
\begin{equation} 
\label{eqn:ImgFHT}
\sum_{w \in W_{\aff}} (-1)^{l(w)} e_{w\bullet_{\ell} \lambda} \in R^{-\infty}(T).
\end{equation}

It is easy to determine $\scr{I}(x_\lambda)$.  Let $R_\lambda^T$ denote the $\bZ_2$-graded representation of $T$ corresponding to the numerator of the Weyl character formula for $R_\lambda$, thus $R_\lambda^T$ has character
\[ \sum_{\ol{w} \in W} (-1)^{l(\ol{w})}e_{\ol{w}(\lambda+\rho)-\rho}. \]
By the Weyl character formula the characters $\chi(R_\lambda|_T)$, $\chi(R_\lambda^T)$ are related by
\[ \chi(R_\lambda^T)=\chi(R_\lambda|_T)\cdot \chi(\wedge \n_-) \]
where $\wedge \n_-$ denotes the $\bZ_2$-graded representation of $T$ with character
\[ \prod_{\alpha \in \R_-} (1-e_\alpha).\]
In defining the Bott-Thom map we used a complex structure on $\g/\t$ such that $(\g/\t)^{1,0}=\n_-$.  It follows that the image of $x_\lambda$ under restriction to $U \subset G$, followed by the Bott-Thom map is
\begin{equation} 
\label{eqn:ImgGenBott}
[(V^\ast\otimes R_\lambda^T,\iota^\ast \otimes \id_{R_\lambda^T},0)].
\end{equation}
Applying the Morita morphism $\A|_T \dashrightarrow \A_T$ to \eqref{eqn:ImgGenBott} swaps $L^2_{\tau}(\Pi)$ for $V^\ast$.  The Green-Julg map followed by the assembly map send this element to the class of the $C^\ast_\tau(T\times \Pi)$-module
\begin{equation} 
\label{eqn:ImgAssembly}
L^2_{\tau}(\Pi)\otimes R_\lambda^T 
\end{equation}
in $\K_0(C^\ast_\tau(T\times \Pi))$, where $T \ltimes \Pi^\tau$ acts on $L^2_{\tau}(\Pi)$ (see Remarks \ref{rem:LeftReg}, \ref{rem:ExtendSLeft}) by
\[ (t,\heta)\cdot f(\heta^\prime)=\kappa_{\eta^\prime}(t)^{-1}f(\heta^{-1}\heta^\prime)=t^{\ell \eta^\prime}f(\heta^{-1}\heta^\prime).\]
Since the formal character of \eqref{eqn:ImgAssembly} is exactly \eqref{eqn:ImgFHT}, we have proven the following.
\begin{proposition}
\label{prop:InverseFHT}
Let $k>0$ and let $\A$ be a Dixmier-Douady bundle over $G$ with Dixmier-Douady class $\ell=k+\hvee \in \bZ \simeq H^3_G(G,\bZ)$.  The isomorphism
\[ R_k(G) \simeq R^{-\infty}(T)^{W_\aff-\anti,\,\ell}\] 
intertwines $\scr{I}$ with the inverse of the Freed-Hopkins-Teleman isomorphism.
\end{proposition}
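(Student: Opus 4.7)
The plan is to exploit the commutative diagram \eqref{diagram:FHT} from the Freed--Hopkins--Teleman theorem to reduce the problem to computing $\scr{I}$ on a distinguished set of generators. Diagram \eqref{diagram:FHT} tells us that $\K_0^G(G,\A)$ is generated as an abelian group by classes $x_\lambda = (\iota,V^\ast)_\ast [R_\lambda]$ for $\lambda \in \Pi^\ast_k$, each represented by a particularly simple Kasparov triple with zero operator as in \eqref{eqn:GeneratorTriple}, and that under the FHT isomorphism $x_\lambda$ corresponds to the class of $R_\lambda$ in $R_k(G)$. Under the identification \eqref{eqn:AlternatingFormal}, the latter corresponds to the alternating formal character \eqref{eqn:ImgFHT}. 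Therefore it suffices to show $\scr{I}(x_\lambda)$ equals \eqref{eqn:ImgFHT}.

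Next I would chase $x_\lambda$ through each of the five maps \eqref{map:restrict}--\eqref{map:assembly} composing $\scr{I}$. For the restriction map, the representation $\iota^\ast \otimes \id_{R_\lambda}$ already factors through $C_0(\A|_U)$ since the identity lies in $U$, so the triple survives unchanged. For the Bott--Thom map \eqref{eqn:BottThom}, the chosen complex structure $(\g/\t)^{1,0}=\n_-$ contributes a factor $\wedge \n_-$ on the level of representations; using the Weyl character formula in the form $\chi(R_\lambda|_T)\cdot\chi(\wedge\n_-)=\chi(R_\lambda^T)$, where $R_\lambda^T$ is the $\bZ_2$-graded $T$-representation with character $\sum_{\ol{w}\in W}(-1)^{l(\ol{w})}e_{\ol{w}(\lambda+\rho)-\rho}$, the image is represented by the triple \eqref{eqn:ImgGenBott}. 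The Morita morphism $\E$ of \eqref{eqn:MoritaMorphismT} swaps $V^\ast$ for $L^2_\tau(\Pi)$ on the fibres, since $\E$ trivialises the level $\ell$ twist of $\A|_T$ by tensoring with $V$. The Green--Julg isomorphism of Proposition~\ref{prop:modGreenJulg} reinterprets the resulting triple as a $T\ltimes\Pi^\tau$-equivariant cycle on $C_0(\t)$, with the central circle acting with weight $1$. Finally, since the operator $F=0$ and $\Pi$ acts cocompactly on $\t$, the analytic assembly map produces the class of the finitely generated projective $C^\ast_\tau(T\times\Pi)$-module $L^2_\tau(\Pi)\otimes R_\lambda^T$ as in \eqref{eqn:ImgAssembly}.

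The last step is to verify that the formal character of $L^2_\tau(\Pi)\otimes R_\lambda^T$ under the isomorphism \eqref{eqn:IsoFormalChar} equals \eqref{eqn:ImgFHT}. Using the explicit action of $T\ltimes\Pi^\tau$ on $L^2_\tau(\Pi)$ described just after \eqref{eqn:ImgAssembly} (combined with Remarks~\ref{rem:LeftReg} and \ref{rem:ExtendSLeft}), the $T$-character of $L^2_\tau(\Pi)\otimes R_\lambda^T$ is $\bigl(\sum_{\eta\in\Pi} e_{\ell\eta}\bigr)\cdot\chi(R_\lambda^T) = \sum_{\eta\in\Pi}\sum_{\ol{w}\in W}(-1)^{l(\ol{w})}e_{\ol{w}(\lambda+\rho)-\rho + \ell\eta}$. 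Recognising the double sum as a sum over the affine Weyl group $W_\aff = W\ltimes\Pi$ via \eqref{eqn:ShiftedAction}, this is exactly $\sum_{w\in W_\aff}(-1)^{l(w)} e_{w\bullet_\ell \lambda}$, matching \eqref{eqn:ImgFHT}.

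The main delicate point is bookkeeping the weights of the central circle of $LG^\tau$ and the commutation relations in $T\ltimes\Pi^\tau$ across the swap from $V^\ast$ to $L^2_\tau(\Pi)$: both carry weight $-1$ under the right action (so that the tensor $L^2_\tau(\Pi)\otimes V$ descends to a $T\times\Pi$-bundle, as in Section~\ref{sec:MoritaMorphism}), while after Green--Julg it is the weight $+1$ left action that governs the formal character via \eqref{eqn:IsoFormalChar}. Getting this sign of $\eta$ right in $e_{\ell\eta}$ versus $e_{-\ell\eta}$, and in particular that the $\Pi$-translate lands on the $W_\aff$-orbit of $\lambda$ rather than of $-\lambda$, is what makes the final answer agree with the FHT formula \eqref{eqn:ImgFHT} rather than its dual.
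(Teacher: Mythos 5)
Your proposal is correct and follows essentially the same route as the paper: reduce to the generators $x_\lambda$ of \eqref{eqn:GeneratorTriple} via the commutative diagram \eqref{diagram:FHT}, chase them through the maps \eqref{map:restrict}--\eqref{map:assembly} to arrive at the module $L^2_\tau(\Pi)\otimes R_\lambda^T$ of \eqref{eqn:ImgAssembly}, and match its formal $T$-character with the affine Weyl sum \eqref{eqn:ImgFHT}. The weight/sign bookkeeping you flag at the end is handled in the paper by the explicit formula $(t,\heta)\cdot f(\heta^\prime)=t^{\ell\eta^\prime}f(\heta^{-1}\heta^\prime)$ for the action on $L^2_\tau(\Pi)$, which confirms your computation.
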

\begin{remark}
Without using the Freed-Hopkins-Teleman theorem, the arguments above show that the map $\scr{I}\colon \K_0^G(G,\A) \rightarrow R^{-\infty}(T)^{W_\aff-\anti, \, \ell}$ is at least surjective.
\end{remark}

\section{Specialization to geometric cycles}\label{sec:IndexMap}
Throughout this section let $\A$ be a $G$-equivariant Dixmier-Douady bundle over $G$ with $\tn{DD}(\A)=\ell=k+\hvee \in \bZ \simeq H^3_G(G,\bZ)$, with $k>0$.  Let $(M,E,\Phi,\S)$ be a D-cycle representing the class $x=(\Phi,\S)_\ast[\scr{D}^E] \in \K^G_0(G,\A)$.  In this section we exhibit $\scr{I}(x)$ as the $T$-equivariant $L^2$-index of a $1^{st}$-order elliptic operator on a non-compact manifold.

\subsection{A cycle for the K-homology push-forward.}\label{sec:Pushforward}
As a first step we describe an analytic cycle representing $x \in \K^G_0(G,\A)$.  To put this in context, one should compare the standard example \ref{ex:DeRhamDirac}.  The result will be a cycle given in terms of a `Dirac operator' acting on sections of a Clifford module, except that the module will have infinite rank (since $\S$ has infinite rank).  The action of the $C^\ast$ algebra $C(\A)$ plays an essential role in making the result a well-defined analytic cycle.  The construction works more generally, with the target space $G$ replaced by any compact Riemannian $G$-manifold $X$.

The push-forward $(\Phi,\S)_\ast [\scr{D}^E]$ is given by the $\KK$-product $[\S]\otimes [\scr{D}^E]$, see \eqref{eqn:PushNotation} and \eqref{eqn:ProdNotation}.  The Hilbert space of the $\KK$-product is described by:
\begin{proposition}
\label{prop:HilbertSpaceIso1}
There is an isomorphism
\[ C_0(\S)\wh{\otimes}_{\Cl(M)} L^2(M,\Cliff(TM)\otimes E) \simeq L^2(M,\S\otimes E)\]
of $\bZ_2$-graded representations of $C_0(\A)$.
\end{proposition}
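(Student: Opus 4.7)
The plan is to construct an explicit unitary intertwiner by globalising a fibrewise isomorphism. At each point $m \in M$, the Morita morphism $\S$ is locally modelled on $\bK(H_0,H_1)$, with $\Cliff(T_mM) \simeq \bK(H_0)$ acting on the right of $\S_m$ and $\A_{\Phi(m)} \simeq \bK(H_1)$ on the left; there is then a canonical isomorphism
\[ \S_m \otimes_{\Cliff(T_mM)} \bigl( \Cliff(T_mM) \otimes E_m \bigr) \xrightarrow{\sim} \S_m \otimes E_m, \qquad s \otimes (c \otimes e) \mapsto (s \cdot c) \otimes e, \]
with inverse $s \otimes e \mapsto s \otimes (1 \otimes e)$. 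First, I would use this pointwise formula to define a map $\Psi$ on algebraic tensors of compactly supported sections $s \in \Gamma_c(M, \S)$ and $f \in \Gamma_c(M, \Cliff(TM) \otimes E)$ by $\Psi(s \otimes f)(m) = s(m) \otimes_{\Cliff(T_mM)} f(m)$.

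The crucial step is to verify that $\Psi$ is isometric for the inner product of the internal tensor product of Hilbert modules on the left and the $L^2$ inner product on the right. Expanding
\[ \bigl\langle s_1 \otimes f_1,\, s_2 \otimes f_2 \bigr\rangle = \bigl\langle f_1,\, \langle s_1, s_2 \rangle_{\Cl(M)} \cdot f_2 \bigr\rangle_{L^2}, \]
with $\langle s_1, s_2 \rangle_{\Cl(M)}(m) = s_1(m)^\ast s_2(m) \in \Cliff(T_mM)$, and integrating against the Riemannian density, the identity reduces to a pointwise trace computation inside $\bK(H_0, H_1) \otimes E_m$. By continuity $\Psi$ extends to the Hilbert-space completions. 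Density of its image in $L^2(M, \S \otimes E)$ follows from a local argument: rank-one elements of $\S_m = \bK(H_0, H_1)$ paired with suitable elementary tensors in $\Cliff(T_mM) \otimes E_m$ generate a dense family of rank-one operators in $\S_m \otimes E_m$, which globalises via a partition of unity.

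Finally, the left $C_0(\A)$-action and the $\bZ_2$-grading are preserved by construction, since $C_0(\A)$ acts only on the first tensor factor via the Morita morphism $\S$ and all identifications are $\bZ_2$-graded (in particular, $\wh{\otimes}$ becomes ordinary graded tensor product under $\Psi$). The main technical obstacle is the isometry statement: one must fix precisely the Hilbert-space structure on $\S \otimes E$ (coming from the Hilbert--Schmidt norm in the local model $\bK(H_0, H_1)$) and match it carefully against the $\Cl(M)$-balanced pairing of $C_0(\S)$ with $L^2(M, \Cliff(TM) \otimes E)$. Once this is carried out in a local trivialisation, standard gluing and completion arguments deliver the global unitary isomorphism.
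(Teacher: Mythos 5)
Your proposal is correct and follows essentially the same route as the paper: define the map on algebraic tensors via the right Clifford action $s\otimes\varphi\mapsto \c(\varphi)s$, check that the $\Cl(M)$-balanced pre-inner product $\langle \varphi_1,(s_1,s_2)_{\Cl(M)}\varphi_2\rangle_{L^2}$ matches the $L^2$ inner product on $\S\otimes E$, and conclude by density. Your fibrewise/local-trivialisation phrasing and the explicit density argument are just a more detailed rendering of the paper's (terser) proof.
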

\noindent The proof is essentially the same as for the standard example \ref{ex:DeRhamDirac}.  For the reader's benefit we include a proof in the appendix.

Recall that $\S$ is a right $\Cliff(TM)$-module, and let
\begin{equation} 
\label{eqn:CliffActionS}
\c \colon \Cliff(TM) \rightarrow \End(\S) 
\end{equation}
denote the action.  Let
\begin{equation} 
\label{eqn:TwistedCliffActionS}
\hc \colon \Cliff(TM) \rightarrow \End(\S), \qquad \hc(v)s=(-1)^{\deg(s)} \c(v)s, \quad v \in TM
\end{equation}
denote the action with a `twist' coming from the grading.  Choose $G$-invariant Hermitian connections $\nabla^E$ and $\nabla^{\S}$ on $\S$, and let $\nabla^{\S \otimes E}$ denote the induced connection on $\S \otimes E$.  Assume moreover that $\nabla^{\S}$ is chosen satisfying
\begin{equation} 
\label{eqn:CliffConnection}
\nabla^{\S}_v (\c(\varphi)s)=\c(\nabla_v\varphi)s+\c(\varphi)\nabla^{\S}_v s,
\end{equation}
i.e. $\nabla^{\S}$ is a \emph{Clifford connection} (cf. \cite[Definition 3.39]{BerlineGetzlerVergne}).  Such a connection can be constructed as in the case of a finite dimensional Clifford module.  In short, one constructs the connection locally and then patches the local definitions together with a partition of unity.  Locally on $U \subset M$ one can find a spin structure $S^{\tn{spin}}$, and $\S|_U\simeq S^{\tn{spin}} \otimes \S^\prime$ as $\Cliff(TM)$-modules, with $\S^\prime=\Hom_{\Cliff(TM)}(S^{\tn{spin}},\S|_U)$.  Using the spin connection on $S^{\tn{spin}}$ and any Hermitian connection on $\S^\prime$ produces a Clifford connection on $\S|_U$.

The candidate Dirac-type operator $\st{D}^E$ acting on smooth sections of $\S \otimes E$ is the composition
\begin{equation} 
\label{eqn:DefDE}
\Gamma^\infty(\S\otimes E) \xrightarrow{\nabla^{\S\otimes E}} \Gamma^\infty(T^\ast M \otimes \S \otimes E) \xrightarrow{g^\sharp} \Gamma^\infty(TM\otimes \S \otimes E) \xrightarrow{\hc} \Gamma^\infty(\S\otimes E). 
\end{equation}

\begin{proposition}
\label{prop:TwistedKHomCyc}
The operator $\st{D}^E$ defined in \eqref{eqn:DefDE} is essentially self-adjoint.  The triple $(L^2(M,\S\otimes E),\rho,\st{D}^E)$ is an unbounded cycle for an element of $\K^G_0(X,\A)$.
\end{proposition}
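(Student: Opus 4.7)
\medskip

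\noindent\emph{Proof proposal.} The argument splits into two parts: essential self-adjointness of $\st{D}^E$, and the verification of the Kasparov cycle axioms. Throughout, I would exploit the local structure: around any point of $M$ we may find an open $U$ on which $\Phi^\ast \A$ admits a Morita trivialization, and combined with a local spin structure on $U$ this yields a decomposition $\S|_U \simeq S^{\tn{spin}}\otimes \S'$ as $\Cliff(TU)$-modules, where $\S'$ is a Hilbert bundle on which $C_0(\A)$ acts (via the Morita morphism) fibrewise by compact operators. Picking the Clifford connection on $\S|_U$ built from the spin connection on $S^{\tn{spin}}$ and a Hermitian connection on $\S'$ (as is done in the construction preceding the proposition), the operator $\st{D}^E$ restricted to $U$ takes the form $(\st{D}^{\tn{spin}}\otimes \id_{\S'\otimes E}) + R$, where $R$ is a bundle endomorphism built from the curvature of $\S'$ and $E$ and is therefore locally bounded.

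\smallskip

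For essential self-adjointness I would follow the standard Chernoff-type argument. The principal symbol of $\st{D}^E$ is Clifford multiplication, which has uniformly bounded operator norm (even though $\S$ is infinite rank, fibrewise Clifford multiplication acts isometrically on $\S\otimes E$). Consequently the associated symmetric hyperbolic system $\partial_t u = i\st{D}^E u$ has finite propagation speed in the Hilbert-bundle sense, and because $M$ is compact, solutions from compactly supported smooth initial data remain globally smooth for all times. A standard argument (e.g.\ as in Higson--Roe or Wolf) then shows that $C^\infty(M,\S\otimes E)$ is a core for $\st{D}^E$ and that $\ker(\st{D}^{E\,\ast}\pm i)=0$, giving essential self-adjointness.

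\smallskip

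For the Kasparov cycle axioms the two nontrivial checks are (i) boundedness of $[\st{D}^E,\rho(a)]$ on a dense subalgebra of $C_0(\A)$, and (ii) compactness of $\rho(a)(1+(\st{D}^E)^2)^{-1/2}$ for every $a\in C_0(\A)$. For (i) I would take the dense subalgebra to be smooth compactly supported sections of $\A$; the Clifford connection is compatible with the module structure via \eqref{eqn:CliffConnection}, so the commutator reduces to Clifford multiplication by the derivative of $a$, yielding a bounded endomorphism. For (ii), the main obstacle is that $(1+(\st{D}^E)^2)^{-1/2}$ is not compact on $L^2(M,\S\otimes E)$ by itself, because the fibres are infinite-dimensional. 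One must extract compactness from $\rho(a)$. The plan is to factor
\[
\rho(a)(1+(\st{D}^E)^2)^{-1/2} = \rho(a)\cdot\bigl((1+(\st{D}^E)^2)^{-1/2}\bigr),
\]
cover $\supp(a)$ by finitely many local trivializations of the kind above, and use a partition of unity to reduce to the model case. In the local model, $\rho(a)$ acts fibrewise by compact operators on $\S'\otimes E$, while $(1+(\st{D}^E)^2)^{-1/2}$ gives a gain of one Sobolev derivative on the compact base. The Rellich-type compactness on the compact base $M$ combined with the fibrewise compactness then gives joint compactness; this is essentially the same mechanism that appears in Kasparov's treatment of Dirac-type operators twisted by $C^\ast$-modules.

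\smallskip

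The main obstacle I expect is a clean formulation of (ii), since the proof requires simultaneously exploiting compactness in the base direction (from ellipticity on a compact manifold) and in the fibre direction (from the fact that $C_0(\A)$ consists of compacts). The neatest route is probably to reinterpret the triple as the Kasparov product $[\S]\otimes[\scr{D}^E]$ as in Example \ref{ex:DeRhamDirac} and invoke the general unbounded-product machinery (Kucerovsky's criterion), for which (i) and a standard positivity estimate suffice; this would sidestep a direct verification of (ii). The $G$-equivariance is immediate from the $G$-invariance of the metric, connections, and Morita morphism.
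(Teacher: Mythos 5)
Your overall strategy matches the paper's: localize over a finite cover on which $\A$ and $\S$ trivialize, use a partition of unity, and extract compactness from the interplay between $\rho(a)$ acting fibrewise by compact operators and elliptic regularity on the compact base. Your finite-propagation-speed argument for essential self-adjointness is a perfectly good substitute for the paper's citation of the Hilbert-bundle extension of the standard proof, and your treatment of the commutator condition (i) is the same as the paper's.

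The genuine gap is in (ii). The sentence ``the Rellich-type compactness on the compact base combined with the fibrewise compactness then gives joint compactness'' is precisely the assertion that needs proof, and as stated it does not follow: a fibrewise compact multiplier composed with an operator that merely gains a Sobolev derivative need not be compact when the fibres are infinite-dimensional, since Rellich fails for Hilbert-bundle-valued Sobolev spaces. The missing idea is a further reduction: after the partition of unity one may take $a=fb$ with $f\in C^\infty_c(U)$ and $b$ a \emph{constant finite-rank} operator on the fibre $H$ (such elements are dense in $C_0(\A)$). Then the range of $\rho(a)(1+(\st{D}^E)^2)^{-1}$ lies in $f\cdot\H^2(U,\ran(b)\otimes F)$ with $\ran(b)\otimes F$ finite-dimensional, and the ordinary Rellich lemma applies to that finite-dimensional bundle. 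This finite-rank approximation is what converts ``fibrewise compact'' into an honest compactness statement. Your proposed fallback---invoking Kucerovsky's criterion to sidestep (ii)---is circular here: that criterion identifies an unbounded Kasparov module as the product of two others, but it presupposes that the candidate triple already \emph{is} an unbounded Kasparov module, which is exactly what this proposition is asserting (the paper uses Kucerovsky only in the subsequent theorem, after the present proposition is established).
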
 
\begin{proof}
The presence of a vector bundle $E$ does not alter the proof, so we set $E=\bC$ to simplify notation.  The condition that $\nabla^{\S}$ is a Clifford connection ensures $\st{D}$ is symmetric, as for a finite dimensional Clifford module (cf. \cite[Proposition 5.3]{LawsonMichelsohn}).  It is possible to extend certain proofs of the essential self-adjointness of a Dirac operator on a finite dimensional vector bundle over a compact manifold quite directly to the case of a smooth Hilbert bundle, cf. \cite[Proposition 1.16]{Ebert2016Index} for details.

It suffices to check that for a dense set of $a \in \Gamma^\infty(\A)$, (1) the commutator $[\st{D},\rho(a)]$ is bounded, and (2) the operator $\rho(a)(1+\st{D}^2)^{-1}$ is compact.  Since the underlying space $X$ is compact, we can find a finite open cover such that for each $U$ in the cover, $\A|_{U} \simeq U \times \bK(H)$ for some Hilbert space $H$, $\S|_{U} \simeq U \times (H \otimes F)$ with $F$ a finite dimensional vector space, and the action $\rho$ of $\A|_{U}$ on $\S|_{U}$ is given by the defining representation of $\bK(H)$ on the first factor in $H \otimes F$.  Using a partition of unity subordinate to the cover, we can assume $a$ has support contained in a single $U$, and moreover that $a$ is of the form $a=fb$ where $f \in C^\infty_c(U)$ and $b \in \bK(H)$ is a constant operator.  For the first assertion, note that
\[ [\st{D},\rho(fb)]=\hc(g^\sharp(df))\rho(b)+f[\st{D},\rho(b)].\]
The first term is bounded since $f$ is smooth. The second term is bounded because on $U$, $\st{D}=\st{D}_0+A$, where $\st{D}_0$ is defined in the same way as $\st{D}$ but using the trivial connection on $U$ (hence $[\st{D}_0,\rho(b)]=0$), and $A$ is a bounded bundle endomorphism.

For the second assertion, it is convenient to assume that $b$ also has finite constant rank.  The range of the operator $(1+\st{D}^2)^{-1}$ is contained in the Sobolev space $\H^2(M,\S)$ of sections with two derivatives in $L^2$, hence the range of $\rho(a)(1+\st{D}^2)^{-1}$ is contained in the space $f \cdot \H^2(U,\ran(b)\otimes F)$.  It follows that the operator $\rho(a)(1+\st{D}^2)^{-1}$ factors through the inclusion
\[ f\cdot \H^2(U,\ran(b)\otimes F) \hookrightarrow L^2(U,\ran(b)\otimes F).\]
Since $\ran(b)\otimes F$ is finite dimensional, the Rellich Lemma implies this inclusion is compact.\ignore{This almost seems a little too slick.  But I think the argument is correct.  The range of the operator is contained in the space of $L^2$ sections of $\ran(b)\otimes V$ to which we can apply $\st{D}$ twice and still land in $L^2$, i.e. in the ordinary Sobolev space $\H^2(U_i,\ran(b)\otimes V)$!}
\end{proof} 

\begin{theorem}
The cycle $(L^2(M,\S\otimes E),\rho,\st{D}^E)$ represents the class $[\S]\otimes [\scr{D}^E] \in \K^G_0(X,\A)$.
\end{theorem}
The proof is essentially the same as the standard example \ref{ex:DeRhamDirac}, see the appendix.

\subsection{The Morita morphism $\A|_U \dashrightarrow \A_U$.}
Recall from Section \ref{sec:defI} that $U$ denotes an $N(T)$-invariant tubular neighborhood of $T$ in $G$, and $\pi_T \colon U \rightarrow T$ the projection map.  Let 
\[ Y=\Phi^{-1}(U) \subset M, \qquad \Phi_T=\pi_T \circ \Phi.\]
The restriction of the Morita morphism $\Cliff(TM) \dashrightarrow \Phi^\ast \A$ to $Y$ is a morphism $\Cliff(TY) \dashrightarrow \Phi^\ast \A|_U$.  Composing with the morphism $\A|_U \dashrightarrow \A_U$ of Section \ref{sec:defI} gives a Morita morphism
\begin{equation} 
\label{eqn:YAT}
\V \colon \Cliff(TY) \dashrightarrow \Phi^\ast \A_U.
\end{equation}

The pullback $\A_{\t}=\exp^\ast \A_T=\t \times \bK(L^2_\tau(\Pi))$ has a canonical $T\ltimes \Pi^\tau$-equivariant Morita trivialization $\A_{\t} \dashrightarrow \underline{\bC}$ given by the $\A_{\t}^{\op}$-module $\t \times L^2_\tau(\Pi)^\ast$.  Hence, we have a pullback diagram
\[ \begin{CD}
\Y @>\Phi_{\t}>> \t\\
@Vq_Y VV	@VV\exp V\\
Y @>\Phi_T >> T
\end{CD}\]
and the pullback of $\V$ to $\Y$ is a Morita morphism
\begin{equation}
\label{PullbackV}
q_Y^\ast \V \colon \Cliff(q_Y^\ast TY)\simeq \Cliff(T\Y) \dashrightarrow \Phi_{\t}^\ast \A_{\t}.
\end{equation}
Composing \eqref{PullbackV} with the Morita trivialization of $\A_{\t}$, we obtain a $T\ltimes \Pi^\tau$-equivariant Morita trivialization
\[ S \colon \Cliff(T\Y) \dashrightarrow \underline{\bC}, \]
or in other words, a $T\ltimes \Pi^\tau$-equivariant spinor module for $\Cliff(T\Y)$.  Thus $S$ is a finite dimensional $T\ltimes \Pi^\tau$-equivariant $\bZ_2$-graded Hermitian vector bundle over $\Y$, together with an isomorphism $\c \colon \Cliff(T\Y)\xrightarrow{\sim} \End(S)$.

The central circle in $\Pi^\tau$ acts on $L^2_\tau(\Pi)$, $S$ with opposite weight (for the action on $L^2_\tau(\Pi)$ we use the right regular representation, for which the weight of the central circle action is $-1$), and hence the diagonal $\Pi^\tau$ action on $L^2_\tau(\Pi) \otimes S$ descends to an action of $\Pi$.  By construction, the $\Phi^\ast \A_U$-$\Cliff(TY)$ bimodule $\V$ is the quotient 
\begin{equation} 
\label{eqn:RepresentV}
\V=(L^2_\tau(\Pi) \otimes S)/\Pi. 
\end{equation}
Let $[\V] \in \KK_T(C_0(\A_U),\Cl(Y))$ denote the corresponding $\KK$-element defined by the pair $(\Phi|_Y,\V)$.  The action of $C_0(\A_U)$ on the right hand side in \eqref{eqn:RepresentV} is as follows.  Given $a \in C_0(\A_U)$, the pullback $q_Y^\ast \Phi^\ast a$ is a $\Pi$-invariant map $\Y \rightarrow \bK(L^2(\Pi))$, hence acts on the first factor of $L^2(\Pi)\otimes S$ by the defining representation for $\bK(L^2_\tau(\Pi))$.  This action preserves the space of $\Pi$-invariant sections of $L^2_\tau(\Pi) \otimes S$, hence descends to an action $\rho$ of $C_0(\A_U)$ on $C_0(\V)$.  The action of $\Cl(Y)$ on the right hand side in \eqref{eqn:RepresentV} can be described in similar terms.

The restriction of the fundamental class $[\scr{D}]$ of $M$ to $Y$ is the fundamental class of $Y$, and we will abuse notation slightly denote it by $[\scr{D}]$ also.  By functoriality of the Kasparov product, the image of $(\Phi,\S)_\ast[\scr{D}^E]|_U$ under the Morita morphism $\A|_U \dashrightarrow \A_U$ equals the $\KK$-product
\[ [\V] \otimes [\scr{D}^E] \in \KK_T(C_0(\A_U),\bC).\]

\subsection{The Dirac operator on $\Y$.}
Choose a complete $N(T)$-invariant Riemannian metric on $Y$.  The Kasparov product $[\V]\otimes [\scr{D}^E] \in \KK_T(C_0(\A_U),\bC)$ is represented by a cycle $(H,\rho,\st{D}^E)$ similar to Section \ref{sec:Pushforward}, with now $H=L^2(Y,\V\otimes E)$.  This cycle has an alternate interpretation as the class represented by a Dirac operator on the covering space $\Y$.  The correspondence between differential operators on $Y$ and $\Y$ that we make use of is well-known, cf. \cite[Section 7.5]{SchickL2}, \cite{AtiyahL2,SingerL2} for further details.
\begin{proposition}
\label{prop:IsoWithCoveringSpace}
There is a $N(T)$-equivariant isomorphism of Hilbert spaces
\[ L^2(Y,\V\otimes E) \simeq L^2(\Y,S\otimes E),\]
intertwining the Clifford actions and preserving the subspaces of smooth compactly supported sections.  Under this isomorphism the operator $\st{D}^E$ in $L^2(Y,\V\otimes E)$ corresponds to the Dirac operator in $L^2(\Y,S\otimes E)$.
\end{proposition}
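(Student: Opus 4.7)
The plan is to construct the unitary directly from the description $\V = (L^2_\tau(\Pi)\otimes S)/\Pi$ in \eqref{eqn:RepresentV}, by unfolding along the $\Pi$-covering $q_Y\colon \Y\to Y$. Over any open $U\subset Y$ small enough that $q_Y^{-1}(U)=\bigsqcup_{\eta\in \Pi} U^\eta$ with each $U^\eta$ diffeomorphic to $U$, the bundle $\V|_U$ is identified with $L^2_\tau(\Pi)\otimes S|_{U^0}$ (for any chosen sheet $U^0$), and therefore $L^2(U,\V\otimes E)\simeq L^2_\tau(\Pi)\otimes L^2(U^0, S\otimes E)$. On the other side, $L^2(q_Y^{-1}(U), S\otimes E)=\bigoplus_{\eta\in\Pi} L^2(U^\eta,S\otimes E)\simeq \ell^2(\Pi)\otimes L^2(U^0,S\otimes E)$, where the second identification uses the $\Pi$-action to transport each sheet to $U^0$. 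After fixing a measurable section of $\Pi\to\Pi^\tau$ to obtain an orthonormal basis of delta functions for $L^2_\tau(\Pi)$ indexed by $\Pi$, the two local models become canonically unitarily isomorphic. Patching these local unitaries via a $\Pi$-invariant partition of unity on $\Y$ produces the desired global isomorphism, which is independent of the auxiliary choices and preserves smooth compactly supported sections.

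With the unitary in hand, $N(T)$-equivariance follows from the fact that $N(T)$ normalizes $\Pi^\tau$ inside $LG^\tau$ and acts compatibly on each piece of the construction (on $\Y$, on $S$, on $L^2_\tau(\Pi)$, and on $E$). The intertwining of the Clifford actions is built into the construction at \eqref{eqn:RepresentV}: the Clifford action on $\V$ is defined by letting $\Cliff(TY)$ act on the $S$-factor via the Morita trivialization $S$ of $\Cliff(T\Y)$ pulled back from $\Y$, and under the unitary this descends to the Clifford action on $S\otimes E$ over $\Y$.

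The substantive remaining point is the identification of the two Dirac operators, and this is where I expect the main work to lie. On $Y$, $\st{D}^E$ is built from a Clifford connection $\nabla^{\V\otimes E}$ constructed patchwise via local spin structures and local trivializations of $\A_U$, while on $\Y$ the Dirac operator is built from a Clifford connection on the finite-rank spinor module $S\otimes E$. The strategy is to show that under the local unitary above, $\nabla^{\V\otimes E}$ pulls back to a Clifford connection on $S\otimes E$ over $\Y$, up to a gauge absorbed by the identification $L^2_\tau(\Pi)\simeq \ell^2(\Pi)$; equivalently, $\st{D}^E$ admits a unique $\Pi$-equivariant lift to a first-order differential operator on $\Y$, because it is a local operator in $Y$. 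Since the Dirac operator associated to $S$ and $E$ on $\Y$ is $\Pi$-equivariant, symmetric, of first order, and has the same principal symbol (Clifford multiplication) as $\st{D}^E$ under the unitary, and since both are built from Clifford connections that locally coincide on any fundamental domain of the cover, they must agree globally.
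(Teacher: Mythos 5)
Your overall strategy (unfold $\V$ along the covering $q_Y\colon \Y\to Y$) is the right one, but two steps do not hold up as written. First, local unitaries cannot be glued with a partition of unity: $\sum_i\chi_i u_i$ is not unitary unless the $u_i$ already agree on overlaps, and checking that agreement is exactly where the difficulty sits. Your local model depends on a choice of sheet $U^0$ and on a (measurable) section of $\Pi^\tau\to\Pi$, and these choices genuinely enter because $S$ is only $\Pi^\tau$-equivariant, with the central circle acting with weight $+1$; transporting a section of $S$ from the sheet $U^\eta$ back to $U^0$ therefore requires choosing a lift $\heta$, and different lifts change the identification by a phase. The assertion that the result is ``independent of the auxiliary choices'' is the crux, not a footnote. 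The paper sidesteps all of this by writing one global, choice-free formula: for $s\in C^\infty_c(\Y,S)$ set
\[ \ti{s}(y)=\sum_{\eta\in\Pi}\eta.\big(\delta\otimes s(\eta^{-1}.y)\big), \]
where $\delta\in L^2_\tau(\Pi)$ is the canonical delta function at the identity and the sum uses the diagonal action of $\Pi$ (not $\Pi^\tau$) on $L^2_\tau(\Pi)\otimes S$, which exists precisely because the central weights on the two factors cancel. The result is a $\Pi$-invariant section, hence descends to a section of $\V=(L^2_\tau(\Pi)\otimes S)/\Pi$, and the map is an $L^2$-isometry with dense range. If you want to keep your local picture, you should exhibit your local unitaries as restrictions of this single global map rather than patching.

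Second, your closing uniqueness argument for the Dirac operators is not valid: a $\Pi$-equivariant, symmetric, first-order operator with prescribed principal symbol is determined only up to a $\Pi$-invariant symmetric odd bundle endomorphism, so ``same symbol $+$ equivariance $+$ symmetry'' does not force the two operators to agree. What actually closes the argument is that the unitary above is induced by a fibrewise isomorphism of Clifford modules covering the local diffeomorphism $q_Y$ and compatible with the connections used to define $\st{D}^E$ on each side; since both operators are given by the identical local formula $\hc\circ g^\sharp\circ\nabla$ (equation \eqref{eqn:DefDE}), intertwining the Clifford actions and the connections immediately intertwines the operators. This is the content of the paper's (admittedly terse) final sentence, and it is a statement about the construction of the unitary, not a uniqueness principle for elliptic operators.
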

\begin{proof}
Let $s \in C^\infty_c(\Y,S)$ be a smooth compactly supported section of $S$, and let $\delta \in L^2_\tau(\Pi)$ denote the function
\[ \delta(\hgamma)=\begin{cases} z &\text{ if } \hgamma=z^{-1}1_{\Gamma^\tau}\\ 0 &\text{ else.}\end{cases} \]
(This element plays the role of the delta function of $L^2(\Pi)$ supported at $1_{\Pi}$.)  Define a smooth section $\ti{s}$ of the bundle of Hilbert spaces $L^2_\tau(\Pi) \otimes S$ over $\Y$ by `averaging over $\Pi$':
\[ \ti{s}(y)=\sum_{\eta \in \Pi} \eta.\big(\delta \otimes s(\eta^{-1}.y)\big) \]
where here we use the fact that $\Pi$ acts on $L^2_\tau(\Pi)\otimes S$ (the summand on the right could also be written $\heta.\delta \otimes \heta.s(\eta^{-1}.y)$, for any lift $\heta \in \Pi^\tau$ of $\eta$).  The section $\ti{s}$ is $\Pi$-invariant, hence descends to a section of $\V$, which is again smooth and compactly supported.  The map intertwines the $L^2$ norms, hence extends to a unitary mapping. It's clear that the map intertwines the Clifford actions, and hence also the corresponding Dirac operators.
\end{proof}

Abusing notation slightly, we continue to write $\st{D}^E$ (resp. $\rho$) for the Dirac operator on the covering space $\Y$ acting on sections of $S\otimes E$ (resp. the representation of $C_0(\A_U)$ on $L^2(\Y,S\otimes E)$ induced by the isomorphism in Proposition \ref{prop:IsoWithCoveringSpace}).
\begin{corollary}
The product $[\V]\otimes [\scr{D}^E]$ is the class $[\st{D}^E]$ represented by the triple 
\[(L^2(\Y,S\otimes E),\rho,\st{D}^E).\]
\end{corollary}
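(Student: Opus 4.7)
The plan is to mirror the argument used in Section \ref{sec:Pushforward} for the D-cycle on $(M,\S)$, applied now to the pair $(Y,\V)$, and then transport the resulting cycle to the covering space $\Y$ via Proposition \ref{prop:IsoWithCoveringSpace}. In outline: identify the Hilbert space of the Kasparov product as $L^2(Y,\V\otimes E)$; build a Dirac operator on this Hilbert space which realizes the product via the Kucerovsky criterion; apply Proposition \ref{prop:IsoWithCoveringSpace} to rewrite everything on $\Y$.

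First I would establish the analogue of Proposition \ref{prop:HilbertSpaceIso1}, namely an isomorphism
\[ C_0(\V)\wh{\otimes}_{\Cl(Y)} L^2(Y,\Cliff(TY)\otimes E) \simeq L^2(Y,\V\otimes E) \]
of $\bZ_2$-graded $C_0(\A_U)$-modules. Using the presentation \eqref{eqn:RepresentV} of $\V$ as $(L^2_\tau(\Pi)\otimes S)/\Pi$, one sees that locally on $Y$ the bundle $\V$ is of the form $L^2_\tau(\Pi)\otimes F$ for a finite-dimensional Clifford module $F$, so the argument in the appendix for Proposition \ref{prop:HilbertSpaceIso1} applies verbatim, carrying the $L^2_\tau(\Pi)$-factor along.

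Next I would construct the Dirac-type operator $\st{D}^E$ on $L^2(Y,\V\otimes E)$ via the composition \eqref{eqn:DefDE}, using a Clifford connection on $\V$ (obtained locally from a local spin structure plus an arbitrary Hermitian connection on the residual factor, and then patched by a partition of unity) together with $\nabla^E$. The triple $(L^2(Y,\V\otimes E),\rho,\st{D}^E)$ is then an unbounded $T$-equivariant cycle for $\KK_T(C_0(\A_U),\bC)$ by essentially the proof of Proposition \ref{prop:TwistedKHomCyc}. The only adaptation is that $Y$ is non-compact, but since elements $a\in C_0(\A_U)$ vanish at infinity in $U$, it suffices to check the two compactness conditions for sections of the form $a=fb$ with $f\in C_c^\infty(U)$ and $b$ a constant finite-rank compact operator in a local trivialization; multiplication by $\Phi^\ast f$ then localizes the resolvent $(1+(\st{D}^E)^2)^{-1}$ to a relatively compact subset of $Y$, where Rellich applies as before. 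The identification of this cycle with the Kasparov product $[\V]\otimes[\scr{D}^E]$ follows from Kucerovsky's unbounded product criterion, precisely as in the appendix's proof of the theorem after Proposition \ref{prop:TwistedKHomCyc}; the Clifford connection property \eqref{eqn:CliffConnection} provides the required connection condition on the bimodule $C_0(\V)$.

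Finally, Proposition \ref{prop:IsoWithCoveringSpace} furnishes a unitary isomorphism $L^2(Y,\V\otimes E)\simeq L^2(\Y,S\otimes E)$ intertwining the Clifford actions, the Dirac operators, and (by construction) the representations of $C_0(\A_U)$, so the two cycles are unitarily equivalent and represent the same class. The main technical obstacle is the Kucerovsky verification in the previous step: one must combine fibrewise compactness coming from $a\in C_0(\A_U)$ acting on the infinite-dimensional $L^2_\tau(\Pi)$-factor with the standard Sobolev/Rellich argument in the transverse $Y$-direction on the finite-dimensional $S$-factor. Once this hybrid compactness is in hand, the remainder of the proof is functorial and the passage from $Y$ to $\Y$ is immediate from Proposition \ref{prop:IsoWithCoveringSpace}.
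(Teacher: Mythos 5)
Your proposal is correct and follows the same route as the paper: the paper likewise obtains the product $[\V]\otimes[\scr{D}^E]$ as a cycle $(L^2(Y,\V\otimes E),\rho,\st{D}^E)$ by repeating the Section \ref{sec:Pushforward} construction for the pair $(Y,\V)$, and then invokes Proposition \ref{prop:IsoWithCoveringSpace} to transport it unitarily to $(L^2(\Y,S\otimes E),\rho,\st{D}^E)$. Your extra care about the non-compactness of $Y$ (complete metric, properness of $\Phi|_Y$ so that $a=fb$ localizes to a relatively compact set where Rellich applies) is a point the paper leaves implicit, but it is the intended argument, not a different one.
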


\subsection{The Bott-Thom map.}\label{sec:BottThom}
Recall we chose a complex structure on $\t^\perp$ such that $(\t^\perp)^{1,0}=\n_-$; thus the complex weights of the $T$-action on $\t^\perp$ in the adjoint representation are the negative roots.  The Bott-Thom class $[\beta \in \K^0_T(\t^\perp)$ is represented by the triple $(C_0(\t^\perp)\otimes \wedge \n_-,\rho,\beta)$, where $\beta \colon \t^\perp \rightarrow \End(\wedge \n_-)$ is the bundle endomorphism given at $\xi \in \t^\perp$ by the Clifford action of $\xi$ on the spinor module $\wedge \n_-$ for $\Cl(\t^\perp)$.  

Choose a diffeomorphism $\st{B}_\epsilon(\t^\perp) \xrightarrow{\sim} \t^\perp$ which we use to pull the Bott element back to an element of $\K^0_T(\st{B}_\epsilon(\t^\perp))$.  Taking the external product with the identity element in $\KK_T(C(\A_T),C(\A_T))$ and using the isomorphism 
\[ C_0(\A_U)\simeq C_0(\st{B}_\epsilon(\t^\perp))\otimes C(\A_T)\]
we obtain an invertible element, still denoted $[\beta]$, in the group
\[ \KK_T(C(\A_T),C_0(\A_U)). \]
The Bott-Thom isomorphism $\KK_T(C_0(\A_U),\bC) \xrightarrow{\sim} \KK_T(C(\A_T),\bC)$ is given by Kasparov product with this element.

The next step is to describe a cycle representing the product 
\[ [\beta] \otimes [\st{D}^E] \in \KK_T(C(\A_T),\bC).\]
We studied a similar product in \cite[Section 4.7]{LSQuantLG}, and we simply state the result.  The operator $\st{D}^E$ is extended to sections of $\wedge \n_- \wh{\otimes} S \otimes E$ (we use the same symbol for the extension) such that
\[ \st{D}^E( \alpha \wh{\otimes} \sigma)=(-1)^{\tn{deg}(\alpha)}\alpha \wh{\otimes}\st{D}^E \sigma \]
whenever $\alpha \in \wedge^{\tn{deg}(\alpha)} \n_-$ is constant and $\sigma$ is a section of $S\wh{\otimes}E$.  The product is represented by the triple 
\[ (L^2(\Y,\wedge \n_- \wh{\otimes} S \otimes E),\rho \circ \pi_T^\ast, \st{D}^E_\beta), \qquad \st{D}^E_\beta = \st{D}^E+\beta_\Y\]
where $\beta_\Y$ is the pullback, via the map 
\[\Y \xrightarrow{\pi} Y \xrightarrow{\Phi} U \simeq T \times \st{B}_\epsilon(\t^\perp) \simeq T \times \t^\perp \xrightarrow{\pr_2} \t^\perp, \] 
of the odd bundle endomorphism $\beta \colon \t^\perp \rightarrow \End(\wedge \n_-)$ described above.

\subsection{The analytic assembly map and the index.}
In \cite[Section 4.7]{LSQuantLG} we verified that the operator $\st{D}^E_\beta=\st{D}^E+\beta_\Y$ is $T$-Fredholm, i.e. the multiplicity of each irreducible representation of $T$ in the $L^2$-kernel $\ker(\st{D}^E_\beta)$ is finite.  Thus $\st{D}^E_\beta$ has a well-defined `$T$-index' denoted $\index(\st{D}^E_\beta)\in R^{-\infty}(T)$, see \cite[Section 2.5]{LSQuantLG}.

Via the isomorphism
\[ \KK_T(C(\A_T),\bC)\simeq \KK_{T\ltimes \Pi^\tau}(C_0(\t),\bC)_{(1)} \]
the element $[\st{D}^E_\beta]$ is identified with an element $[\st{D}^E_\beta] \in \KK_{T\ltimes \Pi^\tau}(C_0(\t),\bC)_{(1)}$.

\begin{proposition}
The image of the class $[\st{D}^E_\beta]$ under the composition
\[ \KK_{T\ltimes \Pi^\tau}(C_0(\t),\bC)_{(1)} \xrightarrow{\mu_{T\ltimes \Pi^\tau}} \KK(\bC,C^\ast_\tau(T\times \Pi)) \simeq R^{-\infty}(T)^{\ell\Pi} \]
is the formal character $\index(\st{D}^E_\beta)$.
\end{proposition}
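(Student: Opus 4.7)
The strategy is to compute $\mu_{T\ltimes\Pi^\tau}([\st{D}^E_\beta])$ directly as the formal difference of $L^2$-kernels of $\st{D}^E_\beta$, regarded as an element of $\K_0(C^\ast_\tau(T\times\Pi))$, and then to apply the identification \eqref{eqn:IsoFormalChar}. First I would replace the unbounded cycle by its bounded transform $F=\st{D}^E_\beta(1+(\st{D}^E_\beta)^2)^{-1/2}$, which represents the same class in $\KK_{T\ltimes\Pi^\tau}(C_0(\t),\bC)_{(1)}$, and arrange after a norm-small compactly supported perturbation (as in \cite{BaumConnesHigson}, recalled in Section \ref{sec:Assembly}) that $F$ is properly supported with respect to the $C_0(\t)$-action. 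The Hilbert-module description then yields a Hilbert $C^\ast(T\ltimes\Pi^\tau)$-module $\scr{H}$ equipped with an adjointable operator $\F$ extending $F$, and $\mu_{T\ltimes\Pi^\tau}([\st{D}^E_\beta])=[(\scr{H},\F)]$ by construction; since the source lies in the $(1)$-homogeneous summand, the class lands in the direct summand $\K_0(C^\ast_\tau(T\times\Pi))\subset \K_0(C^\ast(T\ltimes\Pi^\tau))$.

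The key analytic input is the $T$-Fredholm property of $\st{D}^E_\beta$ recalled above. Because the central circle of $\Pi^\tau$ acts on $\ker \st{D}^E_\beta$ with weight $1$, each $L^2$-kernel $\ker\st{D}^{E,\pm}_\beta$ is a module for $C^\ast_\tau(T\times\Pi)$. The block decomposition $C^\ast_\tau(T\times\Pi)\simeq\bigoplus_{[\xi]\in\Pi^\ast/\ell\Pi}\bK(L^2([\xi]))$ of Proposition \ref{prop:AlgTPi} shows that such a module is finitely generated and projective precisely when every $T$-isotypic multiplicity is finite, which is exactly what the $T$-Fredholm condition provides. A parametrix argument in the Hilbert-module category, modelled on Atiyah's $L^2$-index theorem \cite{AtiyahL2}, then identifies the kernel and cokernel of $\F$ with those of $\st{D}^{E,\pm}_\beta$, giving
\[ \mu_{T\ltimes\Pi^\tau}([\st{D}^E_\beta])=[\ker\st{D}^{E,+}_\beta]-[\ker\st{D}^{E,-}_\beta]\in\K_0(C^\ast_\tau(T\times\Pi)).\]
The isomorphism \eqref{eqn:IsoFormalChar} sends a finitely generated projective $C^\ast_\tau(T\times\Pi)$-module to its formal $T$-character, so applying it to the right-hand side yields $\index(\st{D}^E_\beta)$ by the very definition of the latter.

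The main obstacle will be the parametrix step, i.e.\ showing that $\F$ has closed range on $\scr{H}$ and that $\ker\F^\pm$ coincide with $\ker\st{D}^{E,\pm}_\beta$ as $C^\ast_\tau(T\times\Pi)$-modules. Because the underlying manifold $\Y$ is non-cocompact, one cannot appeal directly to the standard cocompact arguments, and one must exploit the Bochner-type inequality $(\st{D}^E_\beta)^2\geq\|\beta_\Y\|^2-C$ (with $C$ a constant coming from the bounded cross term $[\st{D}^E,\beta_\Y]$) to produce a uniform spectral gap for $\st{D}^E_\beta$ outside a neighbourhood of the locus where $\beta_\Y=0$. Patching this gap estimate with a local elliptic parametrix near that locus and combining with the finite-multiplicity control from $T$-Fredholmness should yield a parametrix for $\F$ modulo $C^\ast_\tau(T\times\Pi)$-compact operators, delivering the required closed-range and kernel-identification statements.
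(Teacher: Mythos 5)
Your overall strategy --- compute $\mu_{T\ltimes\Pi^\tau}([\st{D}^E_\beta])$ as $[\ker\F^+]-[\ker\F^-]$, identify these with the $L^2$-kernels, and apply \eqref{eqn:IsoFormalChar} --- is the right one, and the first paragraph matches the paper. But the step you yourself flag as the main obstacle (closed range of $\F$ on $\H$ and identification of $\ker\F^\pm$) is precisely the content of the proposition, and the route you sketch for it does not go through as stated. A Bochner estimate plus a local elliptic parametrix near the zero locus of $\beta_\Y$ is problematic because that zero locus is the full $\Pi$-covering of $\Phi^{-1}(T)\cap Y$ and is therefore non-compact in $\Y$; more fundamentally, what you need is invertibility modulo the \emph{generalized} compacts of the Hilbert $C^\ast(N)$-module $\H$, not modulo compact operators on $H$, and a parametrix constructed from local elliptic data on $\Y$ does not obviously live in the right ideal. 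You would essentially be re-proving the $T$-Fredholm property of \cite[Section 4.7]{LSQuantLG} and then still facing the transfer to $\H$.

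The paper's resolution is purely algebraic and sidesteps all of this. Since $C^\ast(T\ltimes\Pi^\tau)_{(1)}$ is a \emph{finite} direct sum of elementary algebras $\bK(L^2([\xi]))$ (Proposition \ref{prop:AlgTPi}), the module $\H$ splits into finitely many Hilbert $\bK(L^2([\xi]))$-modules, and by the strong Morita equivalence $\bK(V)\sim\bC$ every adjointable generalized Fredholm operator over such an algebra is left multiplication by an ordinary Fredholm operator; Atkinson's theorem then gives closed range and finitely generated projective kernels \emph{for free}, with no parametrix on $\Y$ required. The remaining point, which your proposal does not address, is why $\ker\F^\pm$ coincide with the $L^2$-kernels: the paper establishes the trace identity $\Tr(\rho((s_1,s_2)_{C^\ast(N)}))=(s_1,s_2)_{L^2}$, which embeds $H$ into $\H$ as the subspace of elements with trace-class inner product and shows that the Hilbert-module inner product on $\ker\F^\pm$ composed with the trace is the $L^2$-inner product. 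Also note the paper takes $F=\chi(\st{D}^E_\beta)$ with $\chi$ having compactly supported Fourier transform, which makes $F$ properly supported \emph{and} preserves the $T$-index; your generic norm-small perturbation preserves the KK-class but would require a separate argument that the index is unchanged.
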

\begin{proof}
Let $N=T\ltimes \Pi^\tau$.  Let $H=L^2(\Y,\wedge \n_- \wh{\otimes} S \otimes E)$ and let $\H$ be the Hilbert $C^\ast(N)$-module obtained as the completion of $C_c(\t)H$ with respect to the norm defined by the $C^\ast(N)$-valued inner product
\[ (s_1,s_2)_{C^\ast(N)}(n)=(s_1,n\cdot s_2)_{L^2} \]
as in Section \ref{sec:Assembly}.  This inner product takes values in the ideal $C^\ast(N)_{(1)} \subset C^\ast(N)$.  Let $\chi \colon \bR \rightarrow [-1,1]$ be a smooth \emph{normalizing function}, that is, $\chi$ is an odd function, $\chi(t)>0$ for $t >0$ and $\lim_{t \rightarrow \pm \infty}\chi(t)=\pm 1$.  We can moreover choose $\chi$ to have compactly supported Fourier transform.  The operator $F=\chi(\st{D}^E_\beta)$ is then a bounded, properly supported operator on $H$, with the same $T$-index as $\st{D}^E_\beta$, see \cite[Chapter 10]{HigsonRoe}.  $F$ preserves the subspace $C_c(\t)H$, and its restriction extends to a bounded operator $\F$ on $\H$.  The image of $[\st{D}^E_\beta]$ under the analytic assembly map $\mu_N$ is the class in $\K_0(C^\ast(N)_{(1)})$ represented by the pair $(\H,\F)$.

Recall that the ideal $C^\ast(N)_{(1)}$ is isomorphic to a finite direct sum of copies of the compact operators on $L^2(\Pi)$:
\begin{equation} 
\label{eqn:BlockDiag}
C^\ast(N)_{(1)} \simeq \bigoplus_{[\xi] \in \Pi^\ast/\ell \Pi} \bK(L^2([\xi])),
\end{equation}
where $[\xi] \subset \Pi^\ast$ is viewed as a coset of the action of $\ell \Pi$ on $\Pi^\ast$.  There is in particular a faithful representation
\[\rho \colon C^\ast(N)_{(1)} \rightarrow \bK(L^2(\Pi^\ast)) \]
with image the block diagonal subalgebra \eqref{eqn:BlockDiag} of $\bK(L^2(\Pi^\ast))$.  For $s_1,s_2 \in C_c(\t)H$, a short calculation shows that
\begin{equation} 
\label{eqn:TraceNorm}
\Tr(\rho(f))=(s_1,s_2)_{L^2}, \qquad f=(s_1,s_2)_{C^\ast(N)}.
\end{equation}
The norm of an element $f \in C^\ast(N)_{(1)}$ is equal to the operator norm of $\rho(f)$.  Thus for $s \in C_c(\t)H$, its norm in $\H$ is $\|\rho(f)\|^{1/2}$, where $f=(s,s)_{C^\ast(N)}$.  Using \eqref{eqn:TraceNorm} and since $f$ is a positive element, one has $\|\rho(f)\| \le \Tr(\rho(f))=\|s\|^2_{L^2}$.  It follows that $H \hookrightarrow \H$, and corresponds to the subspace of $s \in \H$ such that $\rho(f)$ is trace class, where $f=(s,s)_{C^\ast(N)}$.

The Hilbert $C^\ast(N)_{(1)}$-module $\H$ splits into a finite direct sum:
\[ \H=\bigoplus_{[\xi]\in \Pi^\ast/\ell \Pi} \H_{[\xi]}, \qquad \H_{[\xi]}=\ol{\H\cdot \bK(L^2([\xi]))} \]
with $\H_{[\xi]}$ a Hilbert $\bK(L^2([\xi]))$-module.  The operator $\F$ commutes with the $C^\ast(N)_{(1)}$ action, hence preserves this decomposition, and induces a generalized Fredholm operator $\F_{[\xi]}$ on each $\H_{[\xi]}$.  By the strong Morita equivalence $\bK(L^2([\xi])) \sim \bC$, any countably generated Hilbert $\bK(L^2([\xi]))$-module can be realized as a direct summand of $\bK(V)$, for some $V$.  The generalized Fredholm operator $\F_{[\xi]}$ can be extended by the identity to $\bK(V)$, giving a generalized Fredholm operator $\F_V$ on $\bK(V)$.

Let $V$ be an infinite dimensional Hilbert space and $\bK(V)$ the compact operators.  When $\bK(V)$ is viewed as a right Hilbert $\bK(V)$-module, the space of (bounded) adjointable operators is naturally identified with $\bB(V)$ acting by left multiplication, while the space of generalized compact operators is $\bK(V) \subset \bB(V)$ \cite{WeggeOlsen}.  Thus the generalized Fredholm operators, in the sense of Hilbert modules, on $\bK(V)$, are precisely the operators given by left multiplication by a Fredholm operator on $V$ in the ordinary sense.  It follows from Atkinson's theorem that a generalized Fredholm operator $\F_V$ on $\bK(V)$ has closed range.  If $\F_V$ is left multiplication by $F_V \in \bB(V)$ then $\ran(\F)=\bK(V,\ran(F_V))$ while $\ker(\F)=\bK(V,\ker(F_V))$.  As $\ker(F_V)$ is finite-dimensional, $\bK(V,\ker(F_V))\simeq V \otimes \ker(F_V)$ is a finitely generated, projective $\bK(V)$-module, and also a Hilbert space; moreover, the Hilbert space inner product is given by the composition of the $\bK(V)$-valued inner product with the trace.

By the above generalities, the generalized Fredholm operator $\F_{[\xi]}$ on $\H_{[\xi]}$ must have closed range, and hence the same is true for $\F$.  Moreover
\[ \mu_N([\st{D}^E_{\beta}])=[\ker(\F^+)]-[\ker(\F^-)] \in \K_0(C^\ast(N)_{(1)}), \]
with $\ker(\F^{\pm})$ being Hilbert spaces, with the inner product given by the composition of the $\bK(L^2(\Pi^\ast))$-valued inner product with the trace.  But the latter agrees with the $L^2$-inner product in $H$ by \eqref{eqn:TraceNorm}, hence $\ker(\F^{\pm}) \subset H$.  On $H$ the operator $\F$ coincides with $F$, so this completes the proof.
\end{proof}

\begin{corollary}
\label{cor:AssemblyAsIndex}
Let $\ell>0$ and let $\A$ be a Dixmier-Douady bundle on $G$ with $\tn{DD}(\A)=\ell \in \bZ \simeq H^3_G(G,\bZ)$.  Let $x=(\Phi,\S)_\ast[\scr{D}^E] \in \K_0^G(G,\A)$ be the class represented by a D-cycle $(M,E,\Phi,\S)$.  The formal character $\scr{I}(x) \in R^{-\infty}(T)^{W_\aff-\anti, \, \ell}$ is given by the $T$-index of a $1^{st}$ order elliptic operator $\st{D}^E_\beta$ acting on sections of a vector bundle $\wedge \n_- \wh{\otimes} S \otimes E$ over the space $\Y=\t \times_T \Phi^{-1}(U)$, where $U \supset T$ is a tubular neighborhood of the maximal torus.
\end{corollary}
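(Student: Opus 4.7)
The plan is simply to trace the class $x = (\Phi,\S)_\ast [\scr{D}^E]$ through each of the maps \eqref{map:restrict}--\eqref{map:assembly} composing $\scr{I}$, and to recognize at each stage the cycle built in the preceding parts of the section. Almost all of the ingredients are already in place; the corollary essentially records that they line up.

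First, by Proposition \ref{prop:TwistedKHomCyc} the class $x \in \K_0^G(G,\A)$ is represented by the unbounded cycle $(L^2(M,\S\otimes E),\rho,\st{D}^E)$. Under the restriction map \eqref{map:restrict} to $\K_0^T(U,\A|_U)$, this becomes the same unbounded triple viewed as a cycle for the subalgebra $C_0(\A|_U) \subset C(\A)$, which only ``sees'' $Y=\Phi^{-1}(U)$. Applying the Morita equivalence $\A|_U \dashrightarrow \A_U$ replaces this cycle by the one whose bundle is $\V=(L^2_\tau(\Pi)\otimes S)/\Pi$ of Section 7.2; by Proposition \ref{prop:IsoWithCoveringSpace}, after this Morita step the cycle is identified with the Dirac cycle $(L^2(\Y,S\otimes E),\rho,\st{D}^E)$ on the covering space $\Y$, where the $C_0(\A_U)$-action is the one induced by pulling back and using the $\Pi$-invariance.

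Second, apply the Bott--Thom isomorphism \eqref{eqn:BottThom}: as computed in Section \ref{sec:BottThom}, Kasparov product with the Bott element $[\beta]$ transforms the cycle into $(L^2(\Y, \wedge\n_- \wh\otimes S \otimes E),\, \rho \circ \pi_T^\ast,\, \st{D}^E_\beta)$ with $\st{D}^E_\beta = \st{D}^E + \beta_\Y$. Now rewrite $C(\A_T)$ as the twisted crossed product $\Pi\ltimes_\tau C_0(\t)$ via Proposition \ref{prop:modRieffelFixedPt}, and apply the Green--Julg isomorphism of Proposition \ref{prop:modGreenJulg}. Unwinding the covariant-pair dictionary used in the proof of Proposition \ref{prop:modGreenJulg}, the resulting class in $\KK_{T\ltimes\Pi^\tau}(C_0(\t),\bC)_{(1)}$ is represented by the \emph{same} triple $(L^2(\Y,\wedge\n_- \wh\otimes S\otimes E), \rho\circ\pi_T^\ast, \st{D}^E_\beta)$, with $T\ltimes\Pi^\tau$ acting on $L^2(\Y,\wedge\n_-\wh\otimes S\otimes E)$ through its action on $\Y$ together with the natural lifts on $S$ and on $\wedge\n_-$; the central circle acts with weight $1$ because this is built in to the defining representation of $\bK(L^2_\tau(\Pi))$ on $L^2_\tau(\Pi)$ (Remarks \ref{rem:LeftReg}, \ref{rem:ExtendSLeft}).

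Finally, the Proposition immediately preceding the corollary identifies the image of $[\st{D}^E_\beta]$ under the composition of the assembly map $\mu_{T\ltimes\Pi^\tau}$ with the isomorphism \eqref{eqn:IsoFormalChar} as the formal $T$-character $\index(\st{D}^E_\beta) \in R^{-\infty}(T)^{\ell\Pi}$. By construction the result sits in $R^{-\infty}(T)^{W_\aff\text{-}\anti,\,\ell}$. This is exactly $\scr{I}(x)$, proving the corollary.

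The only point that is not entirely automatic is the Green--Julg bookkeeping: one must check that the equivariant structures on the cycle after the Morita--Bott--Thom steps really do come from a covariant pair $(\pi_A,\pi_{T\ltimes\Pi^\tau})$ for the algebra $\Pi\ltimes_\tau C_0(\t)$, with the central circle acting in the correct weight; this is exactly where the model $\V = (L^2_\tau(\Pi)\otimes S)/\Pi$ and the convention that $\Pi^\tau$ acts on $L^2_\tau(\Pi)$ by the right regular representation of weight $-1$ are essential, so that the diagonal action on $L^2_\tau(\Pi)\otimes S$ descends to $\Pi$ while the representation of $C_0(\A_U)$ retains the weight $+1$ central action. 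Once this identification is made, everything else is a direct appeal to results already proved in this section.
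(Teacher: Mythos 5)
Your proposal is correct and is essentially the paper's own (implicit) argument: the corollary is stated without a separate proof precisely because it is the concatenation of Propositions \ref{prop:TwistedKHomCyc}, \ref{prop:IsoWithCoveringSpace}, the Bott--Thom computation of Section \ref{sec:BottThom}, and the assembly-map proposition immediately preceding it, which is exactly the chain you trace. Your closing remark about the Green--Julg/covariant-pair bookkeeping and the weight conventions on $L^2_\tau(\Pi)$ correctly identifies the only point where care is needed, and it is handled the same way in the paper.
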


\subsection{Application to Hamiltonian loop group spaces.}\label{sec:HamLGSpace}
A proper Hamiltonian $LG$-space $(\M,\omega_\M,\Phi_{\M})$ is a Banach manifold $\M$ with a smooth action of $LG$, equipped with a weakly non-degenerate $LG$-invariant closed 2-form $\omega_{\M}$, and a proper $LG$-equivariant map
\[ \Phi_\M \colon \M \rightarrow L\g^\ast \]
satisfying the moment map condition
\[ \iota(\xi_\M)\omega_\M=-d\pair{\Phi_\M}{\xi}, \qquad \xi \in L\g.\]
A \emph{level $k$ prequantization} of $\M$ is a $LG^{\bas}$-equivariant prequantum line bundle $L \rightarrow \M$, such that the central circle in $LG^{\bas}$ acts with weight $k$.  See for example \cite{MWVerlindeFactorization,AlekseevMalkinMeinrenken} for further background on Hamiltonian loop group spaces.

The subgroup $\Omega G \subset LG$ acts freely on $\M$, hence the quotient $M=\M/\Omega G$ is a smooth finite-dimensional $G$-manifold fitting into a pullback diagram
\begin{equation}
\begin{CD}
\M @>\Phi_{\M}>> L\g^\ast\\
@VVV      @VVV\\
M@>\Phi >>G   
\end{CD}
\end{equation}
where the vertical maps are the quotient maps by $\Omega G$.  The quotient $M$ is a \emph{quasi-Hamiltonian} (or \emph{q-Hamiltonian}) $G$-\emph{space}, and the pullback diagram above gives a 1-1 correspondence between proper Hamiltonian $LG$-spaces and compact q-Hamiltonian $G$-spaces \cite{AlekseevMalkinMeinrenken}.  

Let $G$ be compact and connected.  It was shown in \cite{DDDFunctor} (see \cite{LMSspinor} for a simpler construction) that every q-Hamiltonian $G$-space gives rise, in a canonical way, to a D-cycle $(M,\bC,\Phi,\S_{\tn{spin}})$ for $\K_0^G(G,\A)$ for a suitable Dixmier-Douady bundle $\A$ over $G$; the Morita morphism $\S_{\tn{spin}}$ is referred to as a \emph{twisted spin-c structure} in \cite{DDDFunctor,MeinrenkenKHomology,LMSspinor}.  For $G$ simple and simply connected, the Dixmier-Douady class of $\A$ is $\hvee \in \bZ \simeq H^3_G(G,\bZ)$, and we denote it by $\A^{(\hvee)}$.  We will assume $G$ is simple and simply connected below.

A \emph{level $k$ prequantization} \cite{MeinrenkenKHomology} of a q-Hamiltonian space is a Morita morphism
\[ \E \colon \underline{\bC} \dashrightarrow \Phi^\ast \A^{(k)} \]
where $\tn{DD}(\A^{(k)})=k \in \bZ \simeq H^3_G(G,\bZ)$.  Isomorphism classes of level $k$ prequantizations $\E$ of $M$ are in 1-1 correspondence with isomorphism classes of level $k$ prequantum line bundles $L$ over $\M$, see \cite{MeinrenkenKHomology,ZohrehPrequant} and references therein. 

Let $\S=\S_{\tn{spin}}\otimes \E$, then $(M,\bC,\Phi,\S)$ is a D-cycle for $\K_0^G(G,\A^{(k+\hvee)})$.  The \emph{level $k$ quantization} of $(M,\E)$ was defined by Meinrenken in \cite{MeinrenkenKHomology} as the image of the D-cycle $(M,\bC,\Phi,\S)$ in the analytic twisted K-homology group:
\begin{equation} 
\label{eqn:MeinrenkenDefinition}
(\Phi,\S)_\ast[\scr{D}] \in \K_0^G(G,\A^{(k+\hvee)}). 
\end{equation}
In light of the Freed-Hopkins-Teleman theorem, as well as the 1-1 correspondence between q-Hamiltonian $G$-spaces and Hamiltonian $LG$-spaces, it would seem reasonable to \emph{define} the level $k$ `quantization' of the prequantized loop group space $(\M,\omega_\M,\Phi_\M,L)$ as the element of $R_k(G)$ corresponding to $(\Phi,\S)_\ast [\scr{D}]$ under the Freed-Hopkins-Teleman isomorphism.  This definition satisfies many desirable properties.  For example, the quantization of a prequantized integral coadjoint orbit is the corresponding irreducible positive energy representation.  Also, the definition satisfies a `quantization commutes with reduction' principle, see \cite{MeinrenkenKHomology}.

In \cite{LSQuantLG}, building on constructions in \cite{LMSspinor}, we suggested an alternative definition of the quantization of a Hamiltonian loop group space in terms of the $T$-equivariant $L^2$-index of a Dirac-type operator on a non-compact spin-c submanifold of $\M$.  The latter submanifold and operator can be identified, respectively, with the manifold $\Y$ and the operator $\st{D}_\beta$ that we discussed in Section \ref{sec:IndexMap}; see \cite{LSQuantLG} for details.  As mentioned earlier, we proved in \cite[Section 4.7]{LSQuantLG} that $\st{D}_\beta$ has a well-defined $T$-equivariant $L^2$-index, with formal character lying in $R^{-\infty}(T)^{W_\aff-\anti,\, (k+\hvee)}$, and proposed that the quantization of $\M$ be \emph{defined} as the corresponding element the Verlinde ring $R_k(G)$.  The following is now an immediate consequence of Corollary \ref{cor:AssemblyAsIndex} and Proposition \ref{prop:InverseFHT}.
\begin{corollary}
\label{cor:DefsAgree}
The two definitions of the quantization of $\M$ agree, that is, under the identification $R^{-\infty}(T)^{W_\aff-\anti,\,(k+\hvee)}\simeq R_k(G)$, the $T$-equivariant $L^2-\index(\st{D}_\beta)$ coincides with the image of $(\Phi,\S)_\ast[\scr{D}] \in \K^G_0(G,\A^{(k+\hvee)})$ under the Freed-Hopkins-Teleman isomorphism.
\end{corollary}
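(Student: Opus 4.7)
The plan is to observe that the statement is essentially the composition of two results already in hand, together with the identification of the geometric data on the loop group side with the data appearing in Section \ref{sec:IndexMap}. First I would invoke the construction of \cite{DDDFunctor,LMSspinor} to represent the class $(\Phi,\S)_\ast[\scr{D}] \in \K^G_0(G,\A^{(k+\hvee)})$ by the D-cycle $(M,\bC,\Phi,\S)$ with $\S=\S_{\tn{spin}}\otimes \E$, where $M=\M/\Omega G$ is the q-Hamiltonian quotient and $\E$ is the Morita morphism corresponding to the prequantum line bundle $L$. This is the D-cycle that enters Meinrenken's definition \eqref{eqn:MeinrenkenDefinition}.

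Next I would apply Corollary \ref{cor:AssemblyAsIndex} with trivial coefficient bundle $E=\bC$. This identifies
\[ \scr{I}\big((\Phi,\S)_\ast[\scr{D}]\big) = \index(\st{D}_\beta) \in R^{-\infty}(T)^{W_\aff-\anti,\,(k+\hvee)}, \]
where $\st{D}_\beta$ is the Dirac-type operator constructed in Section \ref{sec:BottThom} acting on sections of $\wedge \n_- \wh{\otimes} S$ over $\Y=\t\times_T \Phi^{-1}(U)$. The key point now, and arguably the main thing to verify, is that this operator and this non-compact manifold coincide with the ones studied in \cite{LSQuantLG}: there, a finite-dimensional spin-c submanifold of $\M$ and a Dirac-type operator on it are constructed from the Hamiltonian $LG$-space data, and the resulting $L^2$-index gives the alternative quantization. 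I would rely on the explicit constructions in \cite[Section 6.4]{LMSspinor} and \cite[Section 4.7]{LSQuantLG} to match: the submanifold of $\M$ used there is $\Pi$-equivariantly diffeomorphic to $\Y$, the spinor bundle constructed there is $S$ (this is exactly the content of the Morita morphism chasing in \eqref{PullbackV}), the coupling by $\beta_\Y$ over the normal directions to the maximal torus matches the use of the Bott-Thom map, and the resulting Dirac operator is $\st{D}_\beta$ with $E=\bC$.

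Once this identification is in place, I would finish by applying Proposition \ref{prop:InverseFHT}: the map $\scr{I}$ inverts the Freed-Hopkins-Teleman isomorphism under the identification $R_k(G)\simeq R^{-\infty}(T)^{W_\aff-\anti,\,(k+\hvee)}$. Hence the $T$-equivariant $L^2$-index $\index(\st{D}_\beta)$, viewed as an element of $R_k(G)$ via this identification, equals the image of $(\Phi,\S)_\ast[\scr{D}]$ under the Freed-Hopkins-Teleman isomorphism, which is Meinrenken's quantization. This is the desired equality of the two definitions.

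The main obstacle is entirely conceptual rather than technical: it is checking that the objects denoted $\Y$, $S$ and $\st{D}_\beta$ in Section \ref{sec:IndexMap}, which were built from an abstract D-cycle and the Morita morphism machinery of Section \ref{sec:MoritaMorphism}, genuinely coincide (as $T$-equivariant differential-geometric data) with the submanifold, spinor bundle and operator constructed in \cite{LMSspinor,LSQuantLG} directly from the Hamiltonian $LG$-space $(\M,\omega_{\M},\Phi_{\M},L)$. Once this bookkeeping is done, the corollary follows by chaining Corollary \ref{cor:AssemblyAsIndex} and Proposition \ref{prop:InverseFHT}; no further analysis is required.
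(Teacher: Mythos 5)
Your proposal is correct and follows exactly the paper's route: the paper also deduces the corollary immediately from Corollary \ref{cor:AssemblyAsIndex} and Proposition \ref{prop:InverseFHT}, after noting (with the details deferred to \cite{LSQuantLG}) that the manifold $\Y$ and operator $\st{D}_\beta$ of Section \ref{sec:IndexMap} coincide with the submanifold and Dirac-type operator constructed there. Your identification of that matching as the only substantive point to check is precisely how the paper treats it.
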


Our principal motivation in \cite{LSQuantLG} was to give a definition amenable to study with the Witten deformation/non-abelian localization, and using this to obtain a new proof of the quantization-commutes-with-reduction theorem for Hamiltonian loop group spaces.  This was mostly carried out in \cite{LSWittenDef} (combined with certain results of \cite{YiannisThesis} or \cite{LMVerlindeQR}).  Thus a consequence of Corollary \ref{cor:DefsAgree} is that this new proof applies also to Meinrenken's \cite{MeinrenkenKHomology} definition \eqref{eqn:MeinrenkenDefinition}.

\appendix

\section{The $\KK$-product}
In this appendix we use the same notation as Section \ref{sec:IndexMap}: $(M,E,\Phi,\S)$ is a D-cycle representing a class $x=[\S]\otimes [\scr{D}^E] \in \K^G_0(X,\A)$.  We provide proofs of two results that were omitted.  These results are well-known at least in the case of a finite-rank Clifford module, as in the standard example \ref{ex:DeRhamDirac}, and the proofs are essentially the same as that case.  The first is Proposition \ref{prop:HilbertSpaceIso1}, which we restate here for the reader's convenience.

\begin{proposition}
\label{prop:HilbertSpaceIso2}
There is an isomorphism
\[ C_0(\S)\wh{\otimes}_{\Cl(M)} L^2(M,\Cliff(TM)\otimes E) \simeq L^2(M,\S\otimes E)\]
of $\bZ_2$-graded representations of $C_0(\A)$.
\end{proposition}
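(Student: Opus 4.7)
The plan is to define a candidate isomorphism $\Psi$ on elementary tensors by the pointwise formula
\[ \Psi\big(s\otimes(\alpha\otimes e)\big)(m)=\big(s(m)\cdot\alpha(m)\big)\otimes e(m), \]
where $s(m)\cdot\alpha(m)\in \S_m$ is the right action of $\alpha(m)\in \Cliff(T_mM)$ on $s(m)$ coming from the $\Cliff(TM)^{\op}$-module structure of the $\Phi^\ast\A$-$\Cliff(TM)$ bimodule $\S$. Associativity of this right action shows that $\Psi$ descends to the algebraic balanced tensor product over $\Cl(M)$, where $\Cl(M)$ acts on $L^2(M,\Cliff(TM)\otimes E)$ by pointwise left multiplication on the $\Cliff(TM)$ factor.

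The central step is verifying that $\Psi$ is an isometry. By definition of the interior tensor product, the inner product of $s_1\otimes\xi_1$ and $s_2\otimes \xi_2$ is $\langle \xi_1,(s_1,s_2)_{\Cl(M)}\cdot\xi_2\rangle_{L^2}$, where $(s_1,s_2)_{\Cl(M)}(m)\in\Cliff(T_mM)$ is the $\Cl(M)$-valued inner product on $C_0(\S)$; fibrewise this is $s_1(m)^\ast s_2(m)$ in any local trivialization $\S|_U\simeq U\times\bK(\Sigma,H)$. The defining compatibility between the $\Cliff$-valued inner product on $\S$ and its Hermitian structure yields the pointwise identity
\[ \langle s_1(m)\alpha_1(m),s_2(m)\alpha_2(m)\rangle_{\S_m}=\langle \alpha_1(m),(s_1,s_2)_{\Cl(M)}(m)\,\alpha_2(m)\rangle_{\Cliff(T_mM)}, \]
and integrating over $M$ against the $E$-inner product factor gives the required equality. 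Hence $\Psi$ extends to an isometric embedding on the Hilbert-module completion.

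Density of the image is verified by a partition-of-unity argument: compactness of $M$ gives a finite open cover $\{U_i\}$ trivializing both $\Phi^\ast\A|_{U_i}\simeq U_i\times \bK(H_i)$ and $\S|_{U_i}\simeq U_i\times\bK(\Sigma_i,H_i)$, and on each patch, taking $\alpha\equiv 1_{\Cliff}$ and allowing $s$ to range over smooth compactly supported sections of $\S|_{U_i}$ realizes all of $C^\infty_c(U_i,\S\otimes E|_{U_i})$ inside the image; patching with the partition of unity gives density in $L^2(M,\S\otimes E)$. Finally, the $\Phi^\ast\A$-linearity of $\Psi$ in the $s$ argument shows that the $C_0(\A)$-action on the left factor is intertwined with the action on $L^2(M,\S\otimes E)$ via the left $\Phi^\ast\A$-module structure of $\S$; all operations respect the $\bZ_2$-grading in the sense of the graded tensor product $\wh{\otimes}$. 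I expect the only nontrivial point to be the pointwise inner-product compatibility between the $\Cliff$-valued pairing on $\S$ and its Hermitian structure; once that identity is verified (which amounts to unpacking the local model $\bK(\Sigma,H)$), the global $L^2$ statement reduces to integration and the remaining steps are routine.
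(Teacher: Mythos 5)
Your proposal is correct and follows essentially the same route as the paper: define the natural map on the algebraic balanced tensor product via the right Clifford action $s\otimes\varphi\mapsto \c(\varphi)s$, check that it intertwines the interior-tensor-product inner product $\langle\varphi_1,(s_1,s_2)_{\Cl(M)}\varphi_2\rangle_{L^2}$ with the $L^2$ inner product on sections of $\S\otimes E$, and conclude by density. The extra detail you supply (the fibrewise trace identity in the local model $\bK(\Sigma,H)$ and the partition-of-unity density argument) is exactly the unpacking the paper leaves implicit.
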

\begin{proof}
Let $C_0(\S)\odot_{\Cl(M)} L^2(M,\Cliff(TM)\otimes E)$ denote the algebraic graded tensor product of $\Cl(M)$-modules.  Define a pre-inner product on $C_0(\S)\odot_{\Cl(M)} L^2(M,\Cliff(TM)\otimes E)$ by the formula:
\begin{equation} 
\label{eqn:HilbModProd}
\pair{s_1\odot \varphi_1}{s_2\odot \varphi_2}=\big(\varphi_1,(s_1,s_2)_{\Cl(M)}\cdot \varphi_2\big)_{L^2},
\end{equation}
where $(s_1, s_2)_{\Cl(M)}$ denotes the $\Cl(M)$-valued inner product of the right Hilbert $\Cl(M)$-module $C_0(\S)$, and $(-,-)_{L^2}$ denotes the ordinary Hilbert space inner product on $L^2(M,\Cliff(TM)\otimes E)$.  Dividing by elements of length $0$ for the corresponding norm and then completing, we obtain a Hilbert space, usually denoted $C_0(\S)\otimes_{\Cl(M)}L^2(M,\Cliff(TM)\otimes E)$.  Using the action of $\Cliff(TM)$ on $\S$, there is a map
\[ C_0(\S)\odot_{\Cl(M)} L^2(M,\Cliff(TM)\otimes E) \rightarrow L^2(M,\S\otimes E) \]
with dense range.  The map intertwines \eqref{eqn:HilbModProd} with the inner product on $L^2(M,\S\otimes E)$, hence extends to an isomorphism from the completion to $L^2(M,\S\otimes E)$.
\end{proof}

Kasparov's fundamental class $[\scr{D}] \in \KK_G(\Cl(M),\bC)$ is the class defined by the operator $\scr{D}=d+d^\ast$ in the Hilbert space $L^2(M,\wedge T^\ast M)$ (cf. \cite[Definition 4.2]{KasparovNovikov}).  Identify $TM \simeq T^\ast M$ using the Riemannian metric.  In terms of a local orthonormal frame $e_i$, $i=1,...,\dim(M)$, the operator $d+d^\ast$ is given by
\[ \sum_i (\epsilon(e_i)-\iota(e_i))\nabla_{e_i} \]
where $\nabla$ is the Levi-Civita connection, and $\epsilon(v)$ denotes exterior multiplication by $v$ \cite[Lemma 5.13]{LawsonMichelsohn}.
There is a (unique) isomorphism of left $\Cliff(TM)$-modules
\[ \Cliff(TM) \rightarrow \wedge T^\ast M \]
which sends $1$ to $1$ and intertwines left multiplication on $\Cliff(TM)$ by $v \in TM$ with $\epsilon(v)+\iota(v) \in \End(\wedge T^\ast M)$.  Under this isomorphism $\epsilon(v)-\iota(v) \in \End(\wedge T^\ast M)$ corresponds to the endomorphism $\wh{v}$ of $\Cliff(TM)$ given by (cf. \cite[Section 1.11,1.12]{GuentnerHigson})
\[ \wh{v}\varphi = (-1)^{\deg(\varphi)}\varphi v. \]
Note that $\wh{v}^2=-\|v\|^2$.  Hence $\scr{D}=d+d^\ast$ corresponds to the operator in $L^2(M,\Cliff(TM))$ given in terms of a local orthonormal frame by the expression
\begin{equation} 
\label{eqn:LocalDeRhamDirac}
\sum_i \hc(e_i)\nabla_{e_i}. 
\end{equation}
More invariantly, the operator $\scr{D}$ (viewed as an operator in $L^2(M,\Cliff(TM))$) is given by the composition
\[ \Gamma^\infty(\Cliff(TM)) \xrightarrow{\nabla} \Gamma^\infty(T^\ast M \otimes \Cliff(TM)) \xrightarrow{\hc} \Gamma^\infty(\Cliff(TM)).\]

Recall from Section \ref{sec:IndexMap} that the candidate Dirac operator $\st{D}^E$ acting on smooth sections of $\S \otimes E$ is the composition
\begin{equation} 
\label{eqn:DefDE}
\Gamma^\infty(\S\otimes E) \xrightarrow{\nabla^{\S\otimes E}} \Gamma^\infty(T^\ast M \otimes \S \otimes E) \xrightarrow{g^\sharp} \Gamma^\infty(TM\otimes \S \otimes E) \xrightarrow{\hc} \Gamma^\infty(\S\otimes E). 
\end{equation}

\begin{theorem}
The cycle $(L^2(M,\S\otimes E),\rho,\st{D}^E)$ represents the class $[\S]\otimes [\scr{D}^E] \in \K^G_0(X,\A)$.
\end{theorem}
\begin{proof}
The presence of a vector bundle $E$ does not alter the proof, so we set $E=\bC$ to simplify notation.  We have shown that the triple $(L^2(M,\S),\rho,\st{D})$ represents a class in $\K^G_0(X,\A)$ (see Proposition \ref{prop:TwistedKHomCyc}) with the correct Hilbert space and representation.  Thus it suffices to check the product criterion in unbounded KK-theory \cite{KucerovskyUnbounded}, which involves checking a `connection condition' and a `semi-boundedness condition'.  The semi-boundedness condition is automatically satisfied, because the operator in the triple representing $[\S]$ is $0$.

For $s \in C_0(\S)$, let $T_s$ denote the map
\[ \varphi \in L^2(M,\Cliff(TM)) \mapsto s\otimes \varphi \in C_0(\S)\otimes_{\Cl(M)}L^2(M,\Cliff(TM)).\]
The `connection condition' says that for a dense set of $s \in C_0(\S)$ the operators
\begin{equation} 
\label{eqn:ConnectionRelations}
\st{D}\circ T_s - (-1)^{\deg(s)}T_s\circ \scr{D}, \qquad T_s^\ast \circ \st{D}-(-1)^{\deg(s)} \scr{D} \circ T_s^\ast 
\end{equation}
extend to bounded operators from $L^2(M,\Cliff(TM))$ to $L^2(M,\S)$.  Let $\varphi \in \Gamma^\infty(\Cliff(TM))$.  

From Proposition \ref{prop:HilbertSpaceIso2},
\begin{equation} 
\label{eqn:Pf1}
C_0(\S)\otimes_{\Cl(M)}L^2(M,\Cliff(TM)) \simeq L^2(M,\S),
\end{equation}
and
\[ T_s(\varphi)=\c(\varphi)s.\]
Calculating in terms of a local orthonormal frame and using \eqref{eqn:CliffConnection} we have  
\begin{align*} 
\st{D}\circ T_s(\varphi)&=\sum_i \hc(e_i)\nabla^{\S}_{e_i}(\c(\varphi)s)\\
&=(-1)^{\deg(s)+\deg(\varphi)}\sum_i \c(e_i)\nabla^{\S}_{e_i}(\c(\varphi)s)\\
&=(-1)^{\deg(s)+\deg(\varphi)}\sum_i \c(e_i)\c(\nabla_{e_i}\varphi)s+\c(e_i)\c(\varphi)\nabla^{\S}_{e_i}s.
\end{align*}
The second term is bounded (in $\varphi$).  For the first term recall that $\c$ is a right action, hence
\[ (-1)^{\deg(\varphi)}\c(e_i)\c(\nabla_{e_i}\varphi)=(-1)^{\deg(\varphi)}\c\big((\nabla_{e_i}\varphi) e_i\big)=\c(\wh{e}_i\nabla_{e_i}\varphi). \]
Thus, using \eqref{eqn:LocalDeRhamDirac}, the first term is
\[ (-1)^{\deg(s)}\c(\scr{D} \varphi)s=(-1)^{\deg(s)}T_s \circ \scr{D}(\varphi). \]
The argument for $T_s^\ast$ is similar.
\end{proof}

\bibliographystyle{amsplain}
\bibliography{../Biblio}
\end{document}